\newtheorem{thm}{Theorem}
\newtheorem{lem}{Lemma}
\newtheorem{pro}{Proposition}
\theoremstyle{definition} 
\newtheorem{rem}{Remark}
\def\section{\@startsection{section}{1}%
  \z@{1.5\linespacing\@plus\linespacing}{.5\linespacing}%
  {\normalfont\bfseries\large\centering}}
\def\pa{\partial}
\def\R{\mathbb R}
\def\C{\mathbb C}
\def\var{\varepsilon}
\def\be{\begin{equation}}
\def\ee{\end{equation}}
\begin{document}

\title[Blow up on a curve for NLS on Riemannian surfaces]{ Blow up on a curve for a nonlinear Schr\"odinger equation on Riemannian surfaces}
\author[N. Godet]{Nicolas Godet}
\address{University of Cergy-Pontoise, Department of Mathematics, CNRS, UMR 8088, F-95000 Cergy-Pontoise}
\email{nicolas.godet@u-cergy.fr}

\begin{abstract}
We consider the focusing quintic nonlinear Schr\"odinger equation posed on a rotationally symmetric surface, typically the sphere $S^2$ or the two dimensional hyperbolic space $H^2$. We prove the existence and the stability of solutions blowing up on a suitable curve with the log log speed. The Euclidean case is handled in \cite{Rap2006} and our result shows that the log log rate persists in other geometries with the assumption of a radial symmetry of the manifold. 
\end{abstract}

\maketitle

\section{Introduction}

We are interested in the focusing nonlinear Schr\"odinger equation 
\begin{equation} \label{nls}
i \partial_t u + \Delta u = - |u|^{4} u, \quad t \geq 0,\  x \in M,
\end{equation}
posed on a complete Riemannian surface $M$. This equation is invariant under some transformations; if $u$ is a solution of (\ref{nls}) then \\

phase transformation: $(t,x) \mapsto e^{i \theta} u(t,x)$ is also a solution for every $\theta \in \R$, \\

translation in time:  $(t,x) \mapsto  u(t+t_0,x)$ is also a solution for every $t_0 \geq 0$, \\

isometry: $ (t,x) \mapsto u(t, R(x))$ is also a solution for every isometry $R$ of $M$. 

\medskip

The equation (\ref{nls}) has some conserved quantities. The more useful are the mass and the energy and are consequences of phase and translation in time invariance: for every $t$,
\[ 
M(u(t)):=\int |u(t)|^2  = M (u(0)),
\]
\[
E(u(t)):=\frac{1}{2} \int | \nabla u(t) |^2  - \frac{1}{6} \int |u(t)|^6 = E(u(0)). 
\]
In this work, we will not use the invariance by isometries which will be useless because of the radial symmetry that we will impose to our solutions. 

\medskip

Here, we investigate the problem of existence of blow up solutions for the equation (\ref{nls}) and in particular the location of singular points and the associated blow up rate in $H^1$.

\medskip

Let us start by recalling some known facts in the Euclidean setting. In the case $M= \R^d$, 
for NLS with a $L^2$-critical or $L^2$-supercritical nonlinearity:
\begin{equation} \label{nls3}
i \partial_t u + \Delta u =- |u|^{p-1} u, \quad t \geq 0, \quad  x \in \R^d, \quad p \in \left [1+ \frac{4}{d} , 1+ \frac{4}{\mathrm{max}(d-2,0) } \right ) ,
\end{equation}
we may prove existence of blow up solutions using the sign of the Virial identity: if $u_0$ is an initial data with negative energy and finite variance, then the corresponding solution blows up in finite time. However, this argument does not describe exactly what really happens at the blow up time; in particular the location of singular points and estimate on the blow up rate are unknown. If we are interested in the blow up rate, several regimes are known at this time. The first one in the $L^2$-critical case ($p=1+4/d$) is characterized by the behavior
\[
 \| \nabla u (t) \|_{L^2(\R^n)} \sim \frac{1}{(T-t)}, \qquad \textrm{as} \quad t \to T,
\]
and is known to be unstable by perturbation of the initial data. It is the easiest to get since due to an additional symmetry in the case $p=1+4/d$, the pseudo-conformal transformation, we may explicitly give a family of solution having this behavior. Unlike the first one, the second regime is stable, and has the blow up rate:
\begin{equation} \label{loglograte}
  \| \nabla u (t) \|_{L^2(\R^n)} \sim \left ( \frac{ \log |\log (T-t)|}{ T-t} \right )^{\frac{1}{2}}, \qquad \textrm{as} \quad t \to T.
\end{equation}
It appears in the $L^2$-critical setting $p=1+4/d$ for any $d$ \cite{MerRap2003}, \cite{MerRap2006} and also in the $L^2$-supercritical case $d=2, p=5$ \cite{Rap2006}. Existence of solutions with this log log behavior for $p=1+4/d$ was first predicted by numerics and heuristic arguments \cite{Fra1985}, \cite{LanPapSulSul1988}, \cite{LemPapSulSul1988b} and proved rigorously by Perelman \cite{Per2001} with blow up in one point and stability in a subspace of $H^1$. Then Merle and Rapha\"el proved that this regime is the only one for initial data with negative energy and mass close to the mass of the ground state, thus obtaining a stability in $H^1$. They successively proved a non sharp upper bound on the blow up rate \cite{MerRap2005b} then the log log upper bound \cite{MerRap2003} and a convergence result on the rest in the decomposition of the solution \cite{MerRap2004} and then the log log lower bound \cite{MerRap2006}. As said before, these result allow to construct solutions with log log speed for an $L^2$-supercritical equation, and with an infinite set of singularities. Indeed, in \cite{Rap2006}, it is shown that equation
\[
i \partial u + \Delta u =- |u|^4 u, \quad t \geq 0, \quad x \in \R^2.
\]
is essentially driven for radial solutions by the one dimensionnal equation ((\ref{nls3}) with $p=5$ and $d=1$) for which we may apply the $L^2$-critical theory and thus gives existence of solutions whose mass concentrates on a circle. In \cite{HolRou2011}, using similar techniques, the authors show the blow up on a $2$-codimensional submanifold for the cubic NLS in $\R^3$ ($p=3$, $d=3$). More recently \cite{MerRapSze2010}, \cite{MerRapSze2012}, two other regimes appeared for slightly $L^2$-supercritical nonlinearities ($1+4/d <p< (1+4/d)(1+ \var), \ d \leq 5$). The first one gives existence of solutions with self-similar behavior proving that the lower bound on the blow up speed induced by the scaling transformation is attained:
\[
\| \nabla u(t) \|_{L^2(\R^d)} \sim \frac{C}{(T-t)^{\alpha}}, \quad \alpha =\frac{1}{2} - \frac{d}{4} + \frac{1}{p-1}.
\]
The second regime concerns the upper bound on the rate and is given by the equivalent
\[
\| \nabla u(t) \|_{L^2(\R^d)} \sim \frac{C}{(T-t)^{\beta}} , \quad \beta=\frac{(p-1)(d-1)}{(p-1)(d-1) + 5-p}.
\]

\medskip

If the equation is posed on a general manifold, with or without boundary, the situation is much less clear. For instance, the classical Virial argument does not work in all generality. However, it may be adapted in some particular manifolds like a bounded domain \cite{Ban2004}, a star shaped domain \cite{Kav1987}, the two dimensional sphere \cite{MaZha2007} or the hyperbolic space \cite{Ban2007}. In a flat geometry, typically a domain of $\R^n$ or a torus, we may localize the Euclidean constructions and prove a blow up result in the $1/t$ and log log regime \cite{BurGerTzv2003}, \cite{PlaRap2007} for the $L^2$-critical equation. In a non flat geometry, the only result concerning the blow up speed we know occurs in the setting of radial and non compact manifolds with $1/t$ blow up speed (\cite{BanCarDuy2011}); the compactness seeming to be an obstruction in this regime in particular due to bad dispersives properties.

\medskip

Here, we ask the question to know if solutions blowing up in a curve with the log log rate (\ref{loglograte}) remains in non flat geometries. The localization of the Euclidean construction in the case of a domain \cite{PlaRap2007} seems to confirm the idea of geometric stability of the log log rate. To prove our result, that is existence of a solution in the log log regime with accumulation of mass on a curve of the manifold, we will follow the approach used to treat the Euclidean case \cite{Rap2006} by imposing a radial symmetry on the curve and the solution. Thus, our geometrical setting will be that of rotationally symmetric surfaces where we will prove the blow up on a one codimensional submanifold.

\medskip

From the work \cite{BurGerTzv2004}, Strichartz estimates with loss of derivatives hold on general compact manifolds without assumption of symmetry from what we may prove a local well posedness result in $H^1$ for (\ref{nls}). Remark that in the noncompact case but with the radial symmetry condition of the manifold, methods developped in \cite{BurGerTzv2004} still apply and this handle the local theory in the energy space for (\ref{nls}). Thus, for every $u_0 \in H^1(M)$, there exists a maximal existence time $T(u_0) >0$ for the solution of (\ref{nls}) with initial data $u_0$. Moreover, we have the criteria: if an initial data $u_0$ is such that $T(u_0) < \infty$ then 
\[
  \| \nabla u (t) \|_{L^2(M)} \to \infty \qquad \textrm{as} \quad t \to T(u_0).
\] 

\medskip

For the sake of clarity, we now give a simplified but non complete statement of our result, we refer to Theorem \ref{theoreme1} for the detailed version. 

\begin{thm} \label{thm22}
 Let $(M,g)$ be a rotationally symmetric surface satisfying a suitable growth condition in the non-compact case (see (\ref{growth}) below). Then there exists an open set of initial data $\mathcal P \subset \{ u_0 \in H^2 (M), \ u_0 \ \textrm{radial} \ \} $ such that if $u_0 \in \mathcal P$, then the corresponding solution blows up on a curve of $M$ with the log log blow up rate: there exits $T(u_0) < \infty$ and $C(u_0) >0$ such that for all $t \in [0, T(u_0) )$,
\[
 \frac{1}{C(u_0) } \left ( \frac{ \log | \log (T-t) |}{ T-t}  \right ) ^{1/2}  \leq \| \nabla u (t) \|_{L^2(M)} \leq C(u_0) \left ( \frac{ \log | \log (T-t) |}{ T-t}  \right ) ^{1/2}.
\]
\end{thm}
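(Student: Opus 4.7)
\section*{Proof proposal for Theorem \ref{thm22}}

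The plan is to reduce the problem to a perturbed one-dimensional $L^2$-critical NLS on the real line, and then to apply the modulation theory of Merle-Rapha\"el, mimicking the strategy of \cite{Rap2006} for the Euclidean case $\R^2$. In geodesic polar coordinates, the rotational symmetry lets us write the metric as $g = dr^2 + \phi(r)^2 d\theta^2$ with a smooth warping function $\phi$ (with $\phi(0)=0$, $\phi'(0)=1$ in the compact/smooth cap setting, or defined on $\R_+$ with a growth hypothesis in the non-compact case). For a radial solution $u(t,r)$ of (\ref{nls}), I would introduce the unknown $v(t,r) = \phi(r)^{1/2} u(t,r)$, which transforms the $L^2(M)$-norm of $u$ into the $L^2(\R)$-norm of $v$ on the effective line, and puts (\ref{nls}) into the form
\begin{equation*}
 i \pa_t v + \pa_r^2 v - V(r) v = - \phi(r)^{-2} |v|^4 v,
\end{equation*}
with $V(r) = \tfrac12 \phi^{-1}\phi'' - \tfrac14 \phi^{-2}(\phi')^2$. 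The nonlinearity is $L^2$-critical in one space dimension (since $p=5$ is critical for $d=1$), and the equation is now a perturbation by a smooth potential and a smooth positive weight.

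Next I would fix a radius $r^*$ in the interior of the support of $\phi$ and look for solutions concentrating at $r^*$. The bulk of the proof is to adapt the bootstrap/modulation machinery of \cite{MerRap2003, MerRap2004, MerRap2006}, together with the localization arguments of \cite{PlaRap2007} (which show that the log log analysis is robust under spatial localization and smooth perturbations of the coefficients). Concretely: I would decompose
\begin{equation*}
 v(t,r) = \frac{1}{\lambda(t)^{1/2}} \bigl( Q_{b(t)} + \var \bigr)\!\left( \frac{r - x(t)}{\lambda(t)} \right) e^{i \gamma(t)} ,
\end{equation*}
where $Q_b$ is the family of approximate ground state profiles of the unperturbed 1D quintic NLS, $Q$ is the one-dimensional ground state, and $\var$ is an orthogonal remainder in a suitable set of directions. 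Choosing $u_0$ in a neighborhood (in $H^2$ with radial symmetry) of the Rapha\"el-type initial data that yields $\lambda(0)\ll 1$, $x(0)$ near $r^*$, $b(0)>0$ small and $\var(0)$ small, I would derive the modulation equations for $(\lambda,\gamma,x,b)$ and a monotonicity/virial-type functional for $\var$.

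The key point is that the weight $\phi(r)^{-2}$, when seen at scale $\lambda \ll 1$ around $x(t) \approx r^*$, reads $\phi(r^*)^{-2}(1+ O(\lambda))$; after an elementary rescaling, the leading-order dynamics is exactly that of the unperturbed 1D quintic NLS, and the corrections (potential $V$, curvature of $\phi^{-2}$) contribute only perturbative terms to the modulation system and to the virial/Lyapunov functional. Reproducing the three-step strategy (non-sharp upper bound, fine dynamics giving $\var \to 0$ in a weighted sense, log log lower bound), one obtains $T(u_0)<\infty$ together with $\lambda(t) \sim \bigl( (T-t)/\log|\log(T-t)|\bigr)^{1/2}$ and $x(t) \to x^* \in $ neighborhood of $r^*$. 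Translating back via $u=\phi^{-1/2} v$, the mass of $u$ concentrates on the geodesic circle $\{ r = x^*\}$, which is a smooth one-codimensional submanifold of $M$, and the $L^2$-critical log log rate for $\|\pa_r v\|_{L^2}$ yields the claimed bounds on $\|\nabla u\|_{L^2(M)}$ since $\|\nabla u\|_{L^2(M)}^2 = \|\pa_r v\|_{L^2(\R)}^2 + O(1)$ up to lower-order terms controlled by $V$.

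The main obstacle is to show that all the quantitative estimates in the Merle-Rapha\"el scheme (in particular the spectral/coercivity properties of the linearized operator around $Q$, and the local virial identity that drives the monotonicity of the Lyapunov functional) survive the introduction of the smooth weight $\phi^{-2}$ and the potential $V$, and are uniform as $\lambda \to 0$ with $x(t)$ confined to a compact subset of $\{\phi>0\}$. In the non-compact case, the growth condition (\ref{growth}) is what prevents $x(t)$ from escaping to infinity and guarantees the needed global-in-space control (smallness of remote tails, good dispersion for the radial Strichartz theory recalled from \cite{BurGerTzv2004}). Openness and stability of $\mathcal P$ follow, as in the Euclidean case, from the continuity of the modulation decomposition with respect to $H^2$-radial perturbations of the initial data.
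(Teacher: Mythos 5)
Your high-level strategy (reduce to a perturbed one-dimensional $L^2$-critical NLS and run the Merle--Rapha\"el log log machinery, treating the geometry as a perturbation at scale $\lambda$) is the right one and is essentially what the paper does. But your specific reduction, the Liouville transform $v=h^{1/2}u$, creates a problem the paper deliberately avoids. Near a pole one has $h(r)\sim r$, so $V=\tfrac12 h''/h-\tfrac14 (h'/h)^2\sim -\tfrac14 r^{-2}$ is a \emph{critical} inverse-square potential, and since the Hardy inequality fails in dimension $2$, the quantity $\int |\pa_r v|^2\,dr$ is in general infinite for $u\in H^1_{\mathrm{rad}}(M)$ (take $u$ bounded with $u(0)\neq 0$: the transform contributes $\tfrac14\int r^{-1}|u|^2\,dr=+\infty$). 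Only the combination $\int|\pa_r v|^2+\int V|v|^2$ equals $\int|\pa_r u|^2 h\,dr$; the two pieces separately are not controlled. So your claims that $V$ is ``a smooth perturbation'' and that $\|\nabla u\|_{L^2(M)}^2=\|\pa_r v\|_{L^2(\R)}^2+O(1)$ are false near the poles, and $v$ does not even live in the function space where the Merle--Rapha\"el analysis is set. The paper instead keeps the measure $h(r)\,dr$ and the first-order term $\tfrac{h'}{h}\pa_r$ in the Laplacian, works with the weighted norms $\int|\pa_y\var|^2\,d\mu$, and proves a radial Sobolev embedding (Proposition \ref{sobolevradial}) valid only on annuli bounded away from the poles.

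The second, and more serious, gap is the control of the critical $H^{1/2}$ norm of the remainder away from the blow-up curve, which is indispensable to close the energy linearization (the non-localized term $\int|\var|^6\,d\mu$ in the expansion of $E(u)$ cannot be absorbed without it). You invoke ``good dispersion for the radial Strichartz theory'' and, implicitly, the local smoothing effect as in \cite{Rap2006}; but on a compact surface such as $S^2$ the $H^{1/2}$ local smoothing estimate fails outright because of the discrete spectrum of $\Delta$, a point the paper makes explicitly. The substitute, and the main new technical ingredient of the paper beyond the Euclidean case, is the almost-conserved $H^2$ pseudo-energy $E_2$ of Rapha\"el--Szeftel type: one propagates $H^{k/2}$ bounds for $k=2,3,4$ outside the singular circle by a bootstrap, and only then deduces the $H^{1/2}$ smallness. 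This is also the reason the open set $\mathcal P$ and the stability statement live in $H^2_{\mathrm{rad}}$ rather than $H^1_{\mathrm{rad}}$, a fact your outline records in the hypothesis but never uses. Without this step the bootstrap does not close in the compact case, so the proposal as written proves the theorem only for manifolds on which local smoothing is available.
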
 

\medskip

Let us give some possible extensions of the result.

\begin{rem}
 A natural question is whether we can find other examples of two dimensional manifolds with blow up on a curve. In some particular cases, it is easy to construct and requires no other result than the $L^2$-critical log log theorem in 1D. Adapting the proof in \cite{PlaRap2007} where the case of a boundary is handled, we may prove a similar result (i.e. blow up in one point with log log rate) in the case $M=S^1$. This allows to prove the same result as Theorem \ref{thm22} for $M=\R \times S^1$ and $M=S^1 \times S^1$. Indeed, let $v$ be a solution of the one dimensional quintic equation in $S^1$ blowing up in a point with log log rate and define $u(t, x,y)=v(t,y)$. Then $v$ is a solution of the two dimensional quintic equation in $M$ and blows up in a circle in the log log regime.
\end{rem}

\begin{rem}
Note that if $N=M^k$ with $M$ rotationally symmetric endowed with the product metric, we may construct directly a solution on $N$ with the log log rate. Just take a solution $u$ on $M$ of (\ref{nls}) given by Theorem \ref{thm22} and set $v(t, x_1, \dots,x_k)=u(t, x_1)$.
\end{rem}

\begin{rem}
To avoid technicalities, we restrict ourselves to a two dimensional manifold but our result is still true for any dimensions $d \geq 3$ with the same method  but in this case the stability result is weaker since it occurs in $H^d_{\mathrm{rad}}$ instead of $H^2_{\mathrm{rad}}$. 
\end{rem}

\begin{rem}
Note that in this paper, manifolds we have considered are without boundary. In the Euclidean case, it has been proved in \cite{PlaRap2007} that the presence of a boundary does not change the result of blow up in one point (with log log speed) that we know in the $\R^n$ case. We may adapt this construction to prove a similar result to Theorem \ref{thm22} for instance for a disk in $\R^2$ with Dirichlet boundary conditions and blow up on a circle.
\end{rem}

\section{Preliminaries}
\subsection{Geometrical setting}
Let us now give the precise assumption on $M$ and what we call a rotationnaly symmetric manifold. We refer to \cite{GalHulLaf2004} and \cite{Pet2006} for proofs of the below results. In a first time, assume that $(M,g)$ is either $(\R^2,g)$ or $(S^2,g)$ where $g$ is a metric on $M$ such that in polar coordonates:
\begin{equation} \label{metric3}
g= dr^2+ h^2(r) g _{ S^1}, \quad r \in (0,\rho),
\end{equation}
and 
\[
 \rho= \left \{ 
\begin{array}{ccc}
 \pi & \textrm{if} & M=S^2, \\
+ \infty & \textrm{if} & M=\R^2.
\end{array}
\right .
\]
where $h: [0,\rho] \to \R^+$ is a smooth function depending only on $r$ and satisfying $h(0)=h(\rho)=0$ and $h(r) >0$ if $r \in (0,\rho)$. The smoothness of the metric at points $r=0$ and eventually $r=\rho$ imposes the following conditions on $h$:
\[
h^{(2k)} (0)=h^{(2k)}(\rho)=0, \qquad k \geq 0,
\] 
and
\[
 h'(0)=1, \qquad h'(\rho)=-1.
\]
Note that if $\rho = \infty$, all these conditions only concern the points $r=0$ and in particular $h$ does not need to vanish at infinity. Recall that polar coordonates on the sphere are defined for a point $u =(x,y,z) \in S^2 \setminus \{ (0,0,-1), (0,0,1) \} $ by  
\[
 r=d((0,0,-1),u), \qquad \theta= \mathrm{arg}( (x,y)) \in S^1.
\]
We will call rotationally symmetric surface all surface isometric to one of these two prototypes. We will make the assumption 
\begin{equation} \label{growth}
h' \leq C h
\end{equation}
for some $C>0$. This condition on the metric essentially says that the volume of a ball of radius $r$ cannot grow more than exponentially in $r$. If $M$ is compact, this assertion is automatically satisfied. In the general case, it is not restrictive and includes most of the usual cases and in particular the hyperbolic space. 

\medskip

Independently of the radial structure of $M$, we may define the Laplace-Beltrami operator as follow. For a function $f:M \to \C $, we define $\nabla f$ as the unique vector field satisfying:
\[
 \forall (x,h) \in TM, \    df(x)(h)= ( \nabla f (x) , h).
\]
In local charts, $\nabla f$ writes:
\[
(\nabla f (x))_j= \sum_i g^{ij} \partial_j f
\]
where $g^{ij}$ are the coefficients of $g^{-1}$. The divergence of a vector field $X$ of $M$ is defined for a volume form on $M$ (that we assume oriented):
\[
 L_X \omega = ( \mathrm{div} X) \omega ,
\]
where $L_X \omega$ is the Lie derivative of $\omega$ along $X$. In local coordonates
\[
\mathrm{div} X= ( \mathrm{det} g )^{- \frac{1}{2}} \sum_j \partial_j \left ( (\mathrm{det} g)^{\frac{1}{2}}   X_j \right ).
\]
The canonical measure on $M$ writes in local charts 
\begin{equation} \label{measure}
 d x: = \sqrt{ | \mathrm{det} g |} dx_1 dx_2 = h(r) dr d \theta.
 \end{equation}
The Laplace operator is defined on smooth functions by
\[
 \Delta = \mathrm{div} \circ \nabla =(\mathrm{det} g)^{- \frac{1}{2}} \sum_{i,j} \partial_i \left (g^{ij} (\mathrm{det} g)^{\frac{1}{2}} \partial_j f  \right )
\]
that with the coordonates $(r, \theta)$ becomes 
\[
\Delta = \partial_r^2 f + \frac{h'}{h} \partial_r f + \frac{1}{h^2} \partial_{\theta}^2 f.
\]
Then one extends $\Delta$ by duality to the space of distributions on $M$. 

Let us give some examples of such manifolds. The simplest is the case of the Euclidean space $\R^2$. Another example is given by compact revolution surfaces. It consists in taking a suitable pattern and applying it a rotation. More precisely, let $c =(r,z):[0,1] \to \R^{+ \ast} \times \R$ a curve parametrized by arc length and such that $c([0,1])$ is a submanifold of $\R^2$. Then the set
\[
\{ ( r(t) \mathrm{cos} \theta, r(t) \mathrm{sin} \theta, z(t) ), \ t \in [0,1], \ \theta \in \R \}
\]
is a rotationally symmetric manifold. This includes the case of the $2$ sphere. An interresting example is that of the hyperbolic space $H^2$:
\[
H^2= \{ (x,y,z) \in \R^3, \ x^2+y^2= z^2-1, \ z>0 \}
\]
with the metric $g=dx^2+dy^2-dz^2$. One may check that this example fits in our framework with $h(r)=\mathrm{sinh}( r)$ and $\rho= \infty$. 

\medskip

\subsection{A radial Sobolev embedding}

The radial symmetry will be very useful to enable us to work as if we were in a one dimensional space. In particular, Sobolev embeddings are better than the usual  two dimensional ones provided that we stay away from poles $\rho=0$ and eventually $\rho= \infty$ and that is what claim the following proposition.
\begin{pro} \label{sobolevradial}
Let $(M,g)$ be a rotationally symmetric manifold satisfying the growth condition (\ref{growth})and $\Omega$ an open radial subset of $M$ that does not contain pole i.e. of the form $\{ x \in M, r(x) \in (\eta, a- \eta) \}$ for some $\eta >0$. Let $s \in (0,1/2)$ and $p=\frac{2}{1-2s}$. Then we have the Sobolev type inequality; for all $f \in H^s_{\mathrm{r}}( M)$ \footnote{see section \ref{precise statement} for the definition of $H^s_{\mathrm{r}}(M)$} for the definition satisfying $\mathrm{ Supp} f \subset \Omega$,
\[
 \| f \|_{L^{p }(M)} \leq C \|f\|_{H^s(M)}.
\]
\end{pro}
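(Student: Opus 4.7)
\emph{Proof plan.} The strategy is to exploit the radial symmetry together with the fact that $\Omega$ stays away from both poles, in order to reduce the statement to the classical one-dimensional fractional Sobolev embedding $H^s(\R)\hookrightarrow L^p(\R)$ for $s\in(0,1/2)$ and $p=\tfrac{2}{1-2s}$.

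\textbf{Step 1 (reduction to a 1D integral).} Since $f$ is radial and $\mathrm{Supp}\,f\subset\Omega$, we write $f(x)=g(r(x))$ with $g$ supported in $[\eta,a-\eta]$. Using the volume form (\ref{measure}),
\[
\|f\|_{L^p(M)}^p = 2\pi \int_{\eta}^{a-\eta}|g(r)|^p h(r)\,dr.
\]
Because $h$ is smooth and strictly positive on the compact interval $[\eta,a-\eta]$, there exist $0<c_1\le h\le c_2$ there, and hence $\|f\|_{L^p(M)}\simeq \|g\|_{L^p(dr)}$. The same comparison at $p=2$ gives $\|f\|_{L^2(M)}\simeq \|g\|_{L^2(dr)}$. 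Moreover, since $\nabla f=g'(r)\,\partial_r$ in the polar frame, $\|\nabla f\|_{L^2(M)}^2 = 2\pi\int |g'(r)|^2 h(r)\,dr \simeq \|g'\|_{L^2(dr)}^2$, so $\|f\|_{H^1(M)}\simeq\|g\|_{H^1(dr)}$.

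\textbf{Step 2 (fractional order by interpolation).} Let $T:f\mapsto g$ on the subspace of radial functions supported in $\Omega$. Step 1 shows that $T$ is a topological isomorphism at the $L^2$ level and at the $H^1$ level (with target the functions supported in $[\eta,a-\eta]$). Since the radial subspace is complemented in the full space (via angular averaging) and $H^s(M)=[L^2(M),H^1(M)]_s$ via the spectral theorem for $1-\Delta$, complex interpolation yields
\[
\|g\|_{H^s(\R)}\le C\,\|f\|_{H^s(M)}, \qquad s\in(0,1).
\]
The identification on the target side does not see boundary conditions because $s<1/2$.

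\textbf{Step 3 (1D embedding).} Extending $g$ by zero outside $[\eta,a-\eta]$ is continuous on $H^s(\R)$ for $s<1/2$, so the classical one-dimensional fractional Sobolev embedding gives
\[
\|g\|_{L^p(dr)}\le C\,\|g\|_{H^s(\R)}\le C\,\|f\|_{H^s(M)}.
\]
Combined with Step 1 this is the claimed inequality.

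The only nontrivial point is the interpolation identity in Step 2, i.e.\ that the radial trace $T$ intertwines $H^s(M)$ with $H^s$ of the profile. This is precisely where the hypothesis that $\Omega$ avoids the poles is decisive: on $\Omega$ the polar coordinates form a smooth diffeomorphism with bounded metric coefficients, so the interpolation argument is not obstructed by the coordinate degeneracy at $r=0$ or $r=\rho$. The growth condition (\ref{growth}) plays no role here; it will be used elsewhere in the paper to control global behaviour, but for this localized embedding the boundedness of $h$ on the fixed compact interval $[\eta,a-\eta]$ suffices.
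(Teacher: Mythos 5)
Your overall strategy --- reduce to the one--dimensional embedding $H^s(\R)\hookrightarrow L^p(\R)$ by comparing the $H^s(M)$-norm of a radial function with the $H^s(dr)$-norm of its profile, the comparison being obtained by complex interpolation between the $L^2$ and $H^1$ endpoints --- is exactly the paper's. The difference, and the gap, lies in how the weight $h$ is handled. In Step 1 you discard it by asserting $0<c_1\le h\le c_2$ on the support, on the grounds that $[\eta,a-\eta]$ is a compact interval on which $h$ is positive. But $a$ here is the paper's $\rho$, which equals $+\infty$ when $M$ is non-compact (e.g.\ $M=\mathbb{H}^2$, $h(r)=\sinh r$); then $\Omega=\{r>\eta\}$ is unbounded, $h$ is unbounded on it, and the two-sided comparisons $\|f\|_{L^q(M)}\simeq\|g\|_{L^q(dr)}$ on which Steps 1 and 2 rest simply fail. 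Consequently your closing claim that the growth condition (\ref{growth}) ``plays no role'' is incorrect: it is precisely the hypothesis that makes the $H^1$ endpoint work in the non-compact case.

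The paper's way around this is to interpolate the weighted map $T_s:f\mapsto h^{1/2}f$ from $H^s(dx)$ to $H^s(dr)$ rather than the bare profile map. At $s=0$ this is an isometry up to a constant (since $dx=h\,dr\,d\theta$), with no hypothesis on $h$ at all; at $s=1$ one writes $\partial_r(h^{1/2}f)=\tfrac{1}{2}h^{-1/2}h'f+h^{1/2}\partial_r f$ and the first term is controlled by $\|f\|_{L^2(dx)}$ exactly because $h'\le Ch$. The support condition on $\Omega$ then enters only at the end, to pass from $\|fh^{1/p}\|_{L^p(dr)}$ to $\|fh^{1/2}\|_{L^p(dr)}$, which requires $h$ bounded \emph{below} away from the poles, not bounded above. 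If you restrict to precompact $\Omega$ your argument is correct and essentially coincides with the paper's, but as written it does not prove the proposition in the stated generality.
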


\begin{proof}
 The expression (\ref{measure}) of the measure $dx$ on $M$, the control $h^{\frac{1}{p} } \leq h^{\frac{1}{2}} $ and the one dimensional Sobolev embedding $H^s \hookrightarrow L^p$ allows us to write for $f \in H^s(M)$ with $\mathrm{Supp} f \subset \Omega$,
\[ 
 \|f\|_{L^p(dx) }^p = C \| f h^{\frac{1}{p}}  \|_{L^p(dr)}^p \leq  C \| f h^{\frac{1}{2}}  \|_{H^s(dr)}^p.
\]
Next, we show that for all $s \in [0,1]$, there exists a constant $C>0$ such that for all $f$,
\begin{equation} \label{sobolev}
  \| h^{\frac{1}{2}} f \| _{H^s(dr)} \leq C  \|f\|_{H^s(dx)}.
\end{equation}
We only need to show (\ref{sobolev}) for $s=0$ and $s=1$. Indeed, by complex interpolation, the map 
\[
\begin{array}{rcl}
T_s: H^s(dx) & \to & H^s(dr) \\
 f & \to & h^{\frac{1}{2}} f 
\end{array}
\]
is well defined and continuous for all $s \in [0,1]$ if and only if it is continuous for $s=0$ and $s=1$. For $s=0$, (\ref{sobolev}) is straightforward using the expression of the measure on $M$ in radial coordinates. Let us show (\ref{sobolev}) for $s=1$. 
We write with the assumption $h' \leq C h$,
\begin{eqnarray*}
 \| \partial_r ( h^{\frac{1}{2}} f) \|_{L^2(dr)} &=& \| \frac{1}{2} h^{-\frac{1}{2}} \partial_r h f + h^{\frac{1}{2}} \partial_r f \| _{L^2(dr)} \\
                                                             & \leq & C \| h^{\frac{1}{2}} f \|_{L^2( dr)}+C\| h^{\frac{1}{2}} \partial_r f \|_{L^2( dr)} \\
                                                            & \leq & C\| f\|_{L^2(dx)} + C\|\nabla f\|_{L^2(dx)}= C \|f\|_{H^1(dx)}.
\end{eqnarray*}
\end{proof}

\subsection{Self-similar profiles and spectral property}

Before stating our result, we mention some notations and known results. In the whole paper, we will often use the one dimensional differential operator 
\[
\Lambda = \frac{1}{2}+ y \partial_y ,
\]
which is essentially the generator of the scaling transformation for the quintic one dimensional NLS. For $\eta >0, b \neq 0$ and small, we note $R_b=\sqrt{1- \eta} \frac{2}{|b|}$ and $R_b^-=(1- \eta) \frac{2}{|b|}$. The derivation of the exact log log law relies on the introduction of self-similar profiles which show up when looking for self similar solutions. Let us denote by $Q$ the nonlinear ground state of the one dimensional focusing $L^2$-critical Schr\"odinger equation i.e. the unique positive, radial, smooth and exponentially decaying solution to 
\[
- \partial_{y}^2 Q +Q = Q^5, \qquad y \in \R.
\]
In our setting, we know an explicit expression of $Q$:
\[
Q(y)= \frac{3^{1/4}}{\sqrt{\mathrm{cosh}(2y)}}.
\]

Following \cite{MerRap2003}, let us now introduce modified ground states, approximations of $Q$ which allow to deduce the exact log log behavior.
 
\begin{pro}[\textbf{Self similar profiles}, \cite{MerRap2003}] 
There exist $C > 0, \eta^{\ast} > 0$ such that the following holds. For all $0 < \eta < \eta^{\ast}$, there exists $b^{\ast}(\eta)>0$ such that
for all $|b| < b^{\ast}(\eta)$, there exists a unique radial solution $Q_b$ to
\[
\left \{
\begin{array}{l}
  \partial_y^2 Q_b  - Q_b +ib \Lambda Q_b + Q_b |Q_b|^4=0, \\
  P_b =Q_b e^{i \frac{b |y|^2}{4}} >0 \ \textrm{if} \  y \in [-R_b, R_b] , \\
  Q_b ( R_b)=0.
\end{array}
\right .
\]
\end{pro}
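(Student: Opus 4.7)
The plan is to reduce the complex first-order-in-$b$ equation to a real second-order ODE by the gauge transformation $Q_b = P_b e^{-iby^2/4}$. A direct computation shows that $\partial_y^2 Q_b + ib\Lambda Q_b = (\partial_y^2 P_b + \tfrac{b^2 y^2}{4} P_b)e^{-iby^2/4}$, while $|Q_b|^4 = |P_b|^4$, so the equation becomes the real ODE
\begin{equation*}
\partial_y^2 P_b + \frac{b^2 y^2}{4} P_b - P_b + P_b^5 = 0,
\end{equation*}
complemented by the Dirichlet condition $P_b(R_b) = 0$ and the evenness constraint $P_b'(0) = 0$. At $b = 0$ this is the $L^2$-critical ground state equation on $\R$, whose unique positive even solution is the explicit $Q(y) = 3^{1/4}/\sqrt{\cosh 2y}$. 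The idea is then to construct $P_b$ as a perturbation of $Q$ for $|b|$ small.

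Writing $P_b = Q + \epsilon_b$ and using $-\partial_y^2 Q + Q - Q^5 = 0$ gives
\begin{equation*}
L_+ \epsilon_b = \frac{b^2 y^2}{4}(Q + \epsilon_b) + N(\epsilon_b), \qquad L_+ = -\partial_y^2 + 1 - 5 Q^4,
\end{equation*}
where $N(\epsilon_b) = O(\epsilon_b^2)$ collects higher-order terms. The operator $L_+$ on $L^2(\R)$ has its only bound-state kernel direction in the odd translation mode $\partial_y Q$; restricted to the even subspace it is invertible, and $\Lambda Q$ is a well-defined even pre-image of $-2Q$ (the scaling obstruction shows up as a nontrivial image, not a kernel element). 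Since $Q$ decays exponentially and $b^2 y^2 \leq 1 - \eta$ on $[-R_b, R_b]$, the source $\tfrac{b^2 y^2}{4} Q$ is of size $O(b^2)$ in any exponentially weighted $L^2$ norm. A Banach fixed-point argument in a weighted $H^1$ space adapted to the exponential decay of $Q$ and to the interval $[-R_b, R_b]$ then produces a unique even solution with $\|\epsilon_b\|_\ast \lesssim b^2$, and positivity of $P_b$ inside $(-R_b, R_b)$ follows from that of $Q$ together with the smallness of $\epsilon_b$.

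The one remaining constraint, $P_b(R_b) = 0$, is enforced by a codimension-one shooting argument: I parametrize the family of even perturbative solutions by $\alpha = P_b(0) - 3^{1/4}$ and view the map $\alpha \mapsto P_b(R_b; \alpha)$ as a function of a scalar. At $b = 0$ and $\alpha = 0$ one has $Q(R_0^+) > 0$ by definition, and the derivative with respect to $\alpha$ is controlled by an element of the generalized kernel of $L_+$ (morally $\Lambda Q$), which evaluated at $R_b$ is nonzero for $b$ small. The implicit function theorem then selects a unique $\alpha(b) = O(b^2)$ achieving the Dirichlet condition, giving the desired $Q_b$.

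The main obstacle I would expect is matching the global perturbative analysis (whose natural setting is $\R$) with the sharp Dirichlet condition at the moving endpoint $R_b \sim 2/|b|$, since $R_b$ grows as $b \to 0$ and the coefficient $\tfrac{b^2 y^2}{4}$ becomes order one near the turning point, where the linear behavior of the ODE transitions from exponentially decaying to oscillatory. The weighted $H^1$ norm must be calibrated so that $L_+^{-1}$ is bounded \emph{uniformly in $b$} and so that the shooting derivative $\partial_\alpha P_b(R_b)$ remains bounded away from zero; this is the heart of the argument and the step where one must follow the spectral analysis of \cite{MerRap2003} closely.
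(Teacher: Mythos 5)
The paper does not prove this proposition; it is quoted from \cite{MerRap2003}, where it is established by a shooting argument on the real ODE obtained after the gauge transformation. Your first step --- setting $Q_b=P_b e^{-iby^2/4}$ and checking that the equation becomes $\partial_y^2P_b+\tfrac{b^2y^2}{4}P_b-P_b+P_b^5=0$ with $P_b'(0)=0$ and $P_b(R_b)=0$ --- is correct and is exactly the reduction used there.

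The perturbative part, however, has a genuine gap in how its two halves are glued together. If the fixed point is run on $\R$ (inverting $L_+$ on even, exponentially weighted functions), it produces a unique globally decaying solution and leaves no freedom to impose $P_b(R_b)=0$; if instead you solve the initial value problem from $y=0$ and shoot in $\alpha=P_b(0)-3^{1/4}$, then the estimate $\|\epsilon_b\|_{\ast}\lesssim b^2$ is false for generic small $\alpha$: the even homogeneous solution $v$ of $L_+v=0$ with $v'(0)=0$ is independent of $\partial_y Q$, hence not in $L^2$, hence grows like $e^{y}$, and over $[0,R_b]$ with $R_b\sim 2/|b|$ it amplifies both the $O(b^2)$ source and the offset $\alpha$ by a factor $e^{c/|b|}$. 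Only for $\alpha$ in an exponentially small window around some $\alpha_{\ast}(b)=O(b^2)$ does the trajectory stay near $Q$ up to $R_b$, and it is precisely this growing mode --- not $\Lambda Q$, which satisfies $L_+\Lambda Q=-2Q$ and decays --- that gives the shooting derivative $\partial_\alpha P_b(R_b)$, which is exponentially large rather than merely nonzero. Making this precise (a variation-of-constants analysis with the decaying/growing fundamental system, or equivalently inverting the Neumann--Dirichlet problem on $[0,R_b]$ with an inverse bounded uniformly in $b$) is the actual content of the proof, and you yourself flag it as missing. Two smaller points: on $[-R_b,R_b]$ one has $\tfrac{b^2y^2}{4}\le 1-\eta$, so the potential $1-\tfrac{b^2y^2}{4}\ge\eta>0$ and the turning point is never reached --- that is the purpose of the factor $\sqrt{1-\eta}$ in $R_b$; and positivity of $P_b$ near $R_b$ cannot follow from ``$\epsilon_b$ small compared with $Q$,'' since $\epsilon_b(R_b)=-Q(R_b)$, so a separate monotonicity or first-zero argument is needed there.
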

Moreover (see \cite{JohPan1993}), $Q_b$ is in $\dot{H}^1$ but not in $L^2$ since we have the behaviour at infinity 
\[
| Q_b(y) | \approx \frac{C(b)}{ |y|^{\frac{1}{2}}} \ \textrm{ as } |y| \to \infty .
\]
Thus, we need to introduce a cut-off to remove this divergence. For $b \neq 0$, let $\phi_b$ be a radial cut-off satisfying
\[
\left \{
\begin{array}{l}
  \phi_b(y)=0 \textrm{ if } |y| \geq R_b, \\
  \phi_b(y)=1 \textrm{ if } |y| \leq R_b^-, \\
  0 \leq \phi_b \leq 1, \\
  \| \partial _y \phi_b \|_{L^{\infty}} + \| \Delta \phi_b \|_{L^{\infty}} \to 0 \textrm{ as } b \to 0,
\end{array}
\right .
\]
and $\tilde{Q}_b$ the new profile defined by $\tilde{Q}_b= Q_b \phi_b$. We then have the following property.

\begin{pro}[\cite{MerRap2003}]
If we note $\tilde{Q}_b= Q_b \phi_b$, then $\tilde{Q}_b$ is a solution to 
\[ 
\partial_y^2 \tilde{Q}_b - \tilde{Q}_b+ i b \Lambda \tilde{Q}_b + \tilde{Q}_b| \tilde{Q}_b|^4 = - \Psi_b,
\]
for some remainder $\Psi_b$:
\[
 \Psi_b = -2 \partial_y \phi_b  \partial_y Q_b - Q_b \phi_b''-iby Q_b \partial_y \phi_b - ( \phi_b ^5- \phi_b) Q_b | Q_b|^5. 
\]
Moreover, we have: \\

\noindent closeness of $\tilde{Q}_b$ to $Q$: the function $\tilde{Q}_b(y) $ is derivable with respect to $b$ for all $y \in \mathbb R $  and there exists $C>0$ such that
\begin{equation} \label{closeness}
 \| e^{(1-C \eta)\frac{ \pi}{4}  |y|} ( \tilde{Q}_b - Q ) \|_{C^3} + \|e^{(1-C \eta)\frac{ \pi}{4}  |y|}  \left ( \frac{ \pa}{ \pa b} \tilde{Q}_b  + i \frac{ |y|^2}{4} Q \right )   \| _{C^3} \to 0 \textrm{ as } b \to 0, 
\end{equation}
energy estimate:
\begin{equation} \label{energyestimate}
|E(\tilde{Q}_b) | \leq e^{(1+ C \eta) \frac{ \pi}{|b|} } ,
\end{equation}
zero momentum:
\begin{equation}
 \mathrm{Im} \int \partial_y \tilde{Q}_b  \overline{\tilde{Q}_b} =0, 
\end{equation}
supercritical mass:
\begin{equation} \label{supercriticalmass}
  d_0= \frac{d}{d (b^2)} \left ( \int |\tilde{Q}_b|^2 \right )  _{|b^2=0} \in (0, \infty).
\end{equation}
\end{pro}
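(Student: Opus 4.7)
The equation with remainder and the explicit formula for $\Psi_b$ come from a direct computation: expanding $\partial_y^2(Q_b\phi_b)=\phi_b\partial_y^2 Q_b+2\partial_y Q_b\,\partial_y\phi_b+Q_b\partial_y^2\phi_b$, $\Lambda(Q_b\phi_b)=\phi_b\Lambda Q_b+yQ_b\partial_y\phi_b$, and $|\tilde Q_b|^4\tilde Q_b=|Q_b|^4 Q_b\,\phi_b^5$, substituting into the operator $\partial_y^2-1+ib\Lambda+|\cdot|^4\cdot$, and invoking the equation for $Q_b$ on $[-R_b,R_b]$ cancels the $\phi_b$-leading contribution and leaves exactly the stated $\Psi_b$. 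The zero-momentum identity is equally algebraic: by the uniqueness in the construction of $Q_b$ one has $Q_b=P_b e^{-ib|y|^2/4}$ on $[-R_b,R_b]$ with $P_b$ real and even, and $\phi_b$ is real and even, so $\Im(\partial_y\tilde Q_b\,\overline{\tilde Q_b})=-(b/2)\,y\,P_b^2\phi_b^2$ is odd in $y$ and integrates to zero.

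The closeness estimate (\ref{closeness}) is the technical heart of the statement. My plan is to differentiate the $Q_b$-equation in $b$ at $b=0$ (where $\phi_0\equiv 1$ and $Q_0=Q$); splitting into real and imaginary parts yields the linearized system
\[
L_+\Re(\partial_b Q_b|_{b=0})=0,\qquad L_-\Im(\partial_b Q_b|_{b=0})=\Lambda Q,
\]
with $L_+=-\partial_y^2+1-5Q^4$ and $L_-=-\partial_y^2+1-Q^4$. Since $\ker L_+=\mathrm{span}(\partial_y Q)$ contains no even function, and a direct check yields $L_-(-|y|^2Q/4)=\Lambda Q$ (using $-\partial_y^2 Q+Q=Q^5$), one obtains $\partial_b Q_b|_{b=0}=-i|y|^2 Q/4$. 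To promote this pointwise identity to the exponential-weighted $C^3$ bound stated in (\ref{closeness}), I would apply standard ODE asymptotics to the inhomogeneous linearized equation, using the exponential decay $Q\sim e^{-|y|}$, and then bootstrap to higher derivatives through the equation itself; the factor $(1-C\eta)$ in the weight reflects the degradation introduced by the finite cut-off at $R_b\sim 1/|b|$.

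The energy and mass estimates then follow by calculation using (\ref{closeness}). For (\ref{energyestimate}), I would express $E(\tilde Q_b)$ in terms of $P_b$, exploit the Pohozaev identity $E(Q)=0$, and localize the residual integral to the matching region $|y|\sim R_b$ where $|P_b(y)|\sim C(b)|y|^{-1/2}$ with $C(b)\sim e^{-\pi/(2|b|)}$; inserting this tail behavior and integrating up to $R_b$ produces the stated exponential factor. For (\ref{supercriticalmass}), the symmetry $b\mapsto -b$ (combined with conjugation of $Q_b$) forces $\int|\tilde Q_b|^2$ to depend on $b$ only through $b^2$, and Taylor-expanding using (\ref{closeness}) identifies $d_0$ with a strictly positive integral involving $|y|^2Q^2$, giving $d_0\in(0,\infty)$. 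The main obstacle throughout is the precise WKB-type analysis of $P_b$ at the matching scale $R_b$, which underlies both (\ref{closeness}) and (\ref{energyestimate}) and is the technical core of \cite{MerRap2003}; once these two estimates are in hand the remaining assertions reduce to bookkeeping.
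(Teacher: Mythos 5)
This proposition is imported from \cite{MerRap2003}: the paper states it without proof, so there is no internal argument to compare yours against, and I can only assess the sketch on its own terms. The algebraic parts are correct. Your Leibniz computation for $\Psi_b$ is right (and it shows the printed exponent should be $|Q_b|^4$ rather than $|Q_b|^5$ --- a typo in the statement), the zero-momentum identity via the oddness of $\mathrm{Im}(\partial_y\tilde Q_b\,\overline{\tilde Q_b})=-\tfrac{b}{2}y(P_b\phi_b)^2$ is the standard argument, the identity $L_-(-|y|^2Q/4)=\Lambda Q$ checks out, and your reading of (\ref{energyestimate}) as an exponentially \emph{small} bound is the correct one (the displayed exponent is missing a minus sign, as the later use $E(\tilde Q_b)=O(\Gamma_b^{1-C\eta})$ confirms).

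The one genuine gap is in the identification $\partial_b Q_b|_{b=0}=-i|y|^2Q/4$. Your linearization gives $L_+V_1=0$ and $L_-V_2=\Lambda Q$, and evenness kills $V_1$; but $\ker L_-=\mathrm{span}(Q)$, and $Q$ is even and decaying, so solvability of $L_-V_2=\Lambda Q$ only yields $V_2=-|y|^2Q/4+cQ$ for some undetermined $c$. Ruling out $c\neq 0$ --- and, more basically, justifying that $b\mapsto Q_b$ is differentiable with the quantified exponential weights of (\ref{closeness}) --- cannot be done by differentiating a black-box family; in \cite{MerRap2003} these estimates are extracted from the perturbative construction of $P_b$ near $Q$ (before the turning point) together with the matching at $R_b$, which is precisely the WKB analysis you defer. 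The same remark applies to (\ref{supercriticalmass}): the vanishing of the first $b$-derivative follows from your expansion, but the strict positivity of $d_0$ is a computation on the second-order corrector that your sketch asserts rather than performs. The architecture of your argument is the right one; just do not present the normalization $c=0$ as a consequence of the linearized equation alone.
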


The following lemma introduce the radiation $\zeta_b$ which enable to describe the refined behavior of the rest $\var$ in the decomposition (\ref{decomposition}).

\begin{lem} \label{lemzeta}
 There exists $C>0, \eta^{\ast}>0$ such that for $\eta \in (0, \eta^{\ast})$, there exists $b^{\ast}(\eta)>0$ such that for $b \in (0,b^{\ast}(\eta))$, there exists a unique radial solution $\zeta_b \in \dot{H}^1(\R)$ to
\[
\partial_y^2 \zeta_b - \zeta_b +ib \Lambda \zeta_b = \Psi_b .
\]
Moreover, if we note
\begin{equation} \label{gammabinfinity}
\Gamma_b = \lim_{|y| \to \infty} |y| |\zeta_b (y)|^2,
\end{equation}
then $\Gamma_b$ is finite, strictly positive and exponentially decreasing in $b$: there exists $D >0$ such that 
\begin{equation} \label{gammabestimate}
\Gamma_b \sim \frac{D}{b} e^{- \frac{ \pi}{b}} \quad \textrm{as} \quad b \to 0;
\end{equation}
and we also have an estimate of the $\dot{H}^1$ norm of $\zeta_b$:
\begin{equation} \label{zetabnorm}
\int | \partial _y \zeta_b|^2 \leq \Gamma_b^{1-C \eta}.
\end{equation}
\end{lem}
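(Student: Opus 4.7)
The plan is to follow the Merle--Rapha\"el strategy by reducing to a Schr\"odinger equation with repulsive harmonic potential, for which a WKB analysis is explicit. Setting $\zeta_b = e^{-iby^2/4}\tilde\zeta$, a direct computation transforms the equation into
\[
\tilde\zeta''(y) + \left(\tfrac{b^2 y^2}{4} - 1\right)\tilde\zeta(y) = e^{iby^2/4}\Psi_b(y).
\]
The homogeneous operator has turning points at $y = \pm 2/|b|$: the interval $(-2/|b|, 2/|b|)$ is classically forbidden and its complement is classically allowed. Crucially, the source $\Psi_b$ is supported in $[-R_b, R_b]$ with $R_b = 2\sqrt{1-\eta}/|b|$, i.e.\ strictly inside the forbidden region but close to the turning points, which is precisely the configuration that produces exponentially small radiation.

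I would then build $\zeta_b$ by variation of parameters using the two WKB fundamental solutions of the homogeneous equation, whose asymptotics at infinity are of the form $|y|^{-1/2} e^{\pm i b y^2/4}$. Undoing the gauge, one branch for $\zeta_b$ decays like $|y|^{-1/2}$ without oscillation (with $\partial_y\zeta_b \in L^2$ at infinity) while the other oscillates like $|y|^{-1/2} e^{-iby^2/2}$, producing a derivative of size $|y|^{1/2}$ that fails to be in $L^2$. Imposing $\zeta_b \in \dot H^1$ eliminates the oscillating branch, plays the role of a Sommerfeld radiation condition, and selects the unique solution.

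The crux is the sharp exponential asymptotic \eqref{gammabestimate}. By the radiation condition, $\Gamma_b = \lim_{|y|\to\infty} |y||\zeta_b(y)|^2$ is proportional to the squared modulus of a weighted integral of $\Psi_b$ against the WKB solution that decays into the forbidden region. That decaying envelope at a point $y \in (0, 2/|b|)$ has magnitude $\exp\!\bigl(-\int_y^{2/|b|}\sqrt{1 - b^2 s^2/4}\,ds\bigr)$, and the explicit computation
\[
\int_0^{2/|b|}\sqrt{1 - \tfrac{b^2 s^2}{4}}\,ds = \frac{\pi}{2|b|}
\]
produces after squaring the sharp factor $e^{-\pi/|b|}$. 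The polynomial prefactor $D/b$ then arises from the Airy connection formula at the turning point together with the explicit behaviour of $Q_b$ on the support of $\Psi_b$ given by \eqref{closeness}. Controlling the constants precisely in this WKB matching is the main obstacle of the proof.

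Finally, the $\dot H^1$ estimate \eqref{zetabnorm} follows from the same representation: in the allowed region $|\partial_y \zeta_b|^2$ integrates to a bounded multiple of $\Gamma_b$, while in the forbidden region the WKB envelope yields a contribution bounded by the same exponential raised to a slightly smaller power, absorbable into the loss $1 - C\eta$ provided $\eta$ is taken small enough. The remaining work is standard bookkeeping of the WKB error terms, which can be imported from \cite{MerRap2003}.
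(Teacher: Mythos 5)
Your proposal follows essentially the same route as the paper's proof: the gauge change $\zeta_b=e^{-iby^2/4}\tilde\zeta$ reducing to the repulsive harmonic oscillator with turning point at $2/|b|$, a Langer/Airy connection across that turning point, variation of parameters with the $\dot H^1$ condition acting as a radiation condition that kills the oscillating branch and gives uniqueness, the action integral $\int_0^{2/|b|}\sqrt{1-b^2s^2/4}\,ds=\pi/(2|b|)$ producing $e^{-\pi/b}$ after squaring, and the $b$-power prefactor from the Airy scaling. The only detail worth flagging is that the sharp constant and strict positivity of $\Gamma_b$ come from integrating by parts to replace $\int Z_1\hat\Psi_b$ by $\int Z_1\tilde P_b^5$, which localizes the source integral at $y=O(1)$ (where \eqref{closeness} applies and the limit is $(\cosh,Q^5)>0$) rather than on the literal support of $\Psi_b$ near $|y|\sim 2/b$.
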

 
\begin{proof} 
We set $Z(r)=\zeta_b(r) e^{i\frac{b r^2}{4}}$ for $r \geq 0$ so that the equation in term of $Z$ is now
\begin{equation} \label{eqonz}
 Z''(r) -Z(r)+ \frac{b ^2 r^2}{4} Z(r)= \hat{\Psi}_b(r),  \qquad r \geq 0,
\end{equation}
where $\hat{\Psi}_b(r)= \Psi_b(r) e^{i \frac{b r^2}{4}}$ with the conditions $Z'(0)=0$ and $Z (r) e^{- i \frac{b r^2}{4}}$ is in $\dot{H}^1$.  Let us now focus on the homogeneous version of (\ref{eqonz}):
\begin{equation} \label{eqonzh}
 Z''(r) -Z(r)+ \frac{b ^2 r^2}{4} Z(r)= 0.
\end{equation}
By a succession of changes of variables, we will show that this equation will be reduced to a semiclassical equation in $b$ with one turning point $2/b$. 
Thus, the problem enters in the framework of semiclassical methods but with a turning point so that the usual WKB ansatz will be no longer valid (near $2/b$). 

\medskip

 After the change of variable $x=2-br, y(x)=Z(r)$, the equation is transformed into 
\[
  b ^2 y''(x) - x h^2(x) y(x) =0, \qquad x \in (- \infty, 2],
\]
where $h$ is defined by 
\[
 h(x) =\sqrt{ 1 - \frac{x}{4}}.
\]
We next apply the change of dependant and independant variables 
\[
 s =  \left \{ 
\begin{array}{rcl}
 \left ( \frac{3}{2}  \int _0^x \sqrt{| \xi |} h(\xi)  d \xi \right )  ^{\frac{2}{3}}  & \text{if} & x \in [0,2] \\
 - \left ( -\frac{3}{2}  \int _0^x \sqrt{| \xi |} h(\xi)  d \xi \right ) ^{\frac{2}{3}}  & \text{if} & x \in ( - \infty, 0 ] \\
\end{array}
\right . ,
\]
and $w(s)=y(x) \sqrt{ s'(x)}$ to the latter equation and this gives
\[
 b^2 \frac{d ^2 w}{ds^2}- s w(s) = b^2 g(s) w(s),  \qquad s \in ( -\infty, s_0 ]
\]
where
\[
 s_0 = s(2), \qquad g(s) = \frac{s^{(3)}(x) }{ 2 (s'(x))^3} -  \frac{3 (s''(x) )^2 }{ 4(s'(x))^4} .
\]
Notice that we have the estimates on $s$ for $k \in \{ 0,1,2,3\}$:
\[
| s^{(k)}(x)| \sim C |x| ^{\frac{4}{3}  - k}, \qquad \textrm{ as } x \to - \infty,
\]
so that $g$ decays like
\[
g(s) \sim \frac{C}{s^2}, \quad \textrm{ as } s \to - \infty.
\]
Finally the scaling $s = b^{\frac{2}{3}} t$ maps the latter equation onto a nonhomogeneous Airy type equation:
\begin{equation} \label{eqony}
 Y''(t)-t Y(t) = b^{\frac{4}{3}} g(b ^{\frac{2}{3}} t) Y(t), \qquad t \in (- \infty, s_0 b ^{-2/3} ],
\end{equation}
with $Y(t)=w(s)$.
Let us denote by $\mathrm{Ai}$ the solution of the Airy equation 
\[
 Y''(t)-t Y(t)=0, \qquad t \in \R ,
\]
with the following behavior at infinity:
\begin{eqnarray*}
 \mathrm{Ai}(t) &\sim & \frac{1}{ \sqrt{ \pi}} \frac{1}{ t ^{\frac{1}{4}}}  \mathrm{exp}\left ( \frac{2}{3} t ^{\frac{3}{2}} \right )  \quad \textrm{ as }\   t \to + \infty, \\
 \mathrm{Ai}(t) &\sim & \frac{i}{ \sqrt{\pi}} \frac{1}{ (-t) ^{\frac{1}{4}}} \mathrm{exp} \left ( \frac{2i}{3} (-t) ^{\frac{3}{2}} \right ) \quad 
\textrm{ as } \ t \to - \infty.
\end{eqnarray*}
Similar expansions hold for the derivatives of $\mathrm{Ai}$. Note that with this choice, $\mathrm{Ai}$ does not vanish on the real line (see \cite{Fed1993} for details). Now we perform a fixed point argument on the equation (\ref{eqony}) to prove the existence of a solution near $\mathrm{Ai}$. We are looking for a solution of the equation (\ref{eqony}) of the form $Y(t)= \mathrm{Ai}(t) ( 1+ R(t))$ so that $R$ satisfies the integral formulation for $t \in (- \infty, s_0 b^{-2/3}]$
\begin{equation} \label{formulation}
 R(t) = \int_{ - \infty} ^t \mathrm{Ai}^2( \tau)  (h_0(\tau) - h_0(t )) \left ( b^{\frac{4}{3}} g( b^{\frac{2}{3}}\tau) (1+ R( \tau)   )\right )d \tau, 
\end{equation}
where 
\[
 h_0(\tau) = \int _{\tau} ^{\infty} \frac{d \alpha}{  \mathrm{Ai}^2( \alpha ) }.
\]
We have that the source term 
\[
S(t)= b ^{\frac{4}{3}} \int_{-\infty} ^t \mathrm{Ai}^2 (\tau) ( h_0(\tau)-h_0(t))  g(b^{\frac{2}{3}} \tau) d \tau
\]
satisfies for $b$ small and for all $k \in \{ 0,1,2 \}, t \in (- \infty,s_0 b^{-2/3} ]$,
\[
| S^{(k)}(t) | \leq  b^{1/6}, 
\]
and for all $t \in (- \infty, - 1 ]$,
\[
|S^{(k)}(t) | \leq  \frac{b ^{1/6} }{ |t| ^{1/2}}.
\]
These estimates are consequences of the asymptotic behavior for $t \leq 0$:
\[
\frac{1}{C} \frac{1}{ \langle t \rangle ^{1/4}} \leq| \mathrm{Ai}(t) | \leq C \frac{1}{ \langle t \rangle ^{1/4}},  \quad | h_0(t) | \leq C, \quad |g(t)| \leq \frac{C}{ \langle t \rangle ^2}, \quad \frac{|\mathrm{Ai}'(t)|}{ |\mathrm{Ai} (t)|} \leq C \langle t \rangle ^{1/2},
\]
and for $t \geq 0$,
\[
| \mathrm{Ai}^2(t) h_0(t) | \leq C, \quad \int_0^t |\mathrm{Ai}^2( \tau) | d \tau \leq C | \mathrm{Ai}^2(t) |, \quad  \frac{1 + t ^{1/2} } {|\mathrm{Ai}(t) ^2 |} \int_0^t \frac{ |\mathrm{Ai}(\tau) ^2 |}{ 1 + \tau ^{1/2}} d \tau \leq C,
\]
\[
|g(t)| \leq \frac{C}{\langle t \rangle ^2}, \qquad \frac{|\mathrm{Ai}'(t)|}{ |\mathrm{Ai} (t)|} \leq C \langle t \rangle ^{1/2}.
\]
Indeed, for instance, for $S(t)$, we may write for $t \leq 0$:
\begin{eqnarray*}
|S(t)| &\leq & C b^{4/3} \int_{- \infty} ^t |\mathrm{Ai} ^2 (\tau)| \frac{ d \tau}{ 1+ b^{2/3} \tau} \\
       & \leq & C b^{4/3} \int_{- \infty} ^t \frac{d \tau} { (1+ \sqrt{\tau}) (1+ b ^ {2/3} \tau)}  \\
       & \leq & C b^{4/3} \int_{- \infty} ^t \frac{d \tau} {  1+ (b ^ {4/9} \tau )^{3/2}  }  \\
       & \leq & C \frac{ b^{7/9}}{ 1+ \sqrt{|t|}} .
\end{eqnarray*}
For $t \geq 0$, we split the integral into two parts:
\begin{eqnarray*}
|S(t)| & \leq & b^{4/3} \left | \int_{- \infty} ^0 \mathrm{Ai}^ 2( \tau) ( h_0(\tau)- h_0(t)) g( b^{2/3} \tau) d \tau  \right | \\
       & & + b^{4/3} \left | \int_0 ^t  \mathrm{Ai}^2(\tau) ( h_0(\tau)- h_0(t)) g( b^{2/3} \tau) d \tau \right | \\
       & \leq & A+B.
\end{eqnarray*}
As for $t \leq 0$, the $A$ term is controlled using the behavior of $\mathrm{Ai}$ at $- \infty$:
\begin{eqnarray*}
|A| & \leq & C b^{4/3} \int_{- \infty} ^0 \frac{ d \tau}{ (1+ \sqrt{\tau})(1+ b^{2/3} \tau)} \\
    & \leq & b^{1/6}.
\end{eqnarray*}
We bound $B$ by $B \leq B_1+B_2$ and write for $t \geq 0$ using estimates mentioned above: 
\begin{eqnarray*}
B_1 & \leq & b^{4/3} \int_0^t | \mathrm{Ai}^2(\tau)| | h_0(\tau)| \frac{ d \tau}{ (1+ b ^{2/3} \tau ) ^{3/2} } \\
 & \leq & C b^{4/3} \int_0^t \frac{d \tau}{ (1+ b^{2/3} \tau )^{3/2}} \\
 & \leq & b^{1/6}.
\end{eqnarray*}
For the $B_2$ term, we have:
\begin{eqnarray*}
B_2 &\leq &b^{4/3} |h_0(t)| \int _0^t | \mathrm{Ai} ^2 (\tau) | | g(b^{2/3} \tau) | d \tau \\
    & \leq & C b^{4/3} |h_0(t)| \int_0^t |\mathrm{Ai}^2(\tau) | d \tau \\
    & \leq & b ^{1/6}.
\end{eqnarray*}
Summing the above estimates, we obtain the desired controls for $S(t)$. The derivatives of $S$ are controlled in the same way. This suggests to perform the fixed point in the metric space 
\begin{eqnarray*}
 E&=& \Big \{  u  \in \mathcal {C} ^2  (- \infty, s_0 b^{-2/3} ]  , \   \forall k \in \{ 0,1,2 \},  \forall t \in (- \infty, s_0 b^{-2/3} ] , \  |u^{(k)} (t) | \leq 2  b^{1/6}  ,  \\
 & & \quad  \forall t \in (- \infty, -1],  \ |u^{(k)}(t)| \leq 2 \frac{b ^{1/6}}{ |t| ^{1/2}}      \Big \} ,
\end{eqnarray*}
with the natural distance
\[
d(u,v)= \sum_{k=0}^2 \left ( \sup_{   t \leq  s_0 b ^{-2/3}}  ( | u^{(k)}(t) - v^{(k)}(t) | ) +  \sup _{t \leq -1} \left (  |t| ^{1/2} ( | u^{(k)}(t) - v^{(k)}(t)  | )  \right ) \right) .
\]
We set 
\[
\Phi : R \mapsto \left ( t \mapsto \int_{- \infty} ^t \mathrm{Ai} (\tau) ( h_0(\tau)- h_0(t)) \left ( b^{\frac{4}{3}} g( b^{\frac{4}{3}} \tau) ( 1+ R(\tau)) \right ) d \tau \right ) ,
\]
and remark that from the estimates on $S(t)$, $\Phi$ maps $E$ into itself and for $b$ small enough and $u,v \in E$,
\[
 d( \Phi( u) , \Phi(v) ) \leq \frac{1} {2} d(u,v).
\]
By the Banach fixed point theorem, we thus construct a solution of (\ref{eqony}) writing $ Y(t)= \mathrm{Ai} (t) ( 1 + R(t) ) $ with $R \in E$. In terms of variables $r,Z$, this gives a solution of (\ref{eqonz}) which writes
\begin{equation} \label{formulaz}
Z(r)=\frac{1}{ \sqrt{s'(2-br)}} \mathrm{Ai} \left ( \frac{s(2-br)}{b ^{2/3} } \right ) \left ( 1+ R \left ( \frac{s(2-br)} {b^{2/3}} \right )\right ).
\end{equation}
We obtain a fundamental system of solutions of (\ref{eqonz}) by adding the complex conjugate to the latter solution $Z$. Let us now define $\zeta_b$. For this, we first introduce $Z_1$ the solution satisfying $Z_1(0)=1, Z_1'(0)=0$. We set:
\[
 \tilde{Z}(r)=\left ( \int_{r}^{R_b} \frac{Z\hat{\Psi}_b } { \mathrm{Wr}(Z,Z_1) } \right ) Z_{1} (r)+ \left ( \int_{R_{b}^-} ^r \frac{Z_{1} \hat{\Psi}_b }{\mathrm{Wr}(Z,Z_1)} \right ) Z(r) ,
\]
and check that $\zeta_b$ that we define as $\zeta_b(r)=\tilde{Z}(r) \mathrm{exp}( -i b r^2 /4)$ satifies the conclusion of the lemma. Note that $\mathrm{Wr}(Z,Z_1)$ is the Wronskian of $Z$ and $Z_1$ and is constant in $r$. Let us verify that $\zeta_b$ satisfies the boundary conditions $\zeta_b'(0)=0, \zeta_b \in \dot{H}^1$ or equivalently $\tilde{Z}'(0)=0, \tilde{Z} (r) \mathrm{exp}(-ib r^2 /4) \in \dot{H}^1$. Since $\hat{\Psi}_b$ is supported in $[R_b^-, R_b]$, for $r \geq R_b$, 
\[
 \tilde{Z}(r)=Z(r) \int_{R_b^-}^{R_b} \frac{Z_{1} \hat{\Psi}_b }{\mathrm{Wr}(Z, Z_1)}.
\]
From the asymptotic properties of $\mathrm{Ai}$, there exists a real number $D \neq 0$ independant of $b$ such that
\[
 |Z(r) |^2 \sim \frac{D}{ b^{\frac{2}{3}} r}, \quad \textrm{as} \quad  r \to + \infty ,
\]
so that by setting 
\[
 \Gamma_b = \frac{D}{ | \mathrm{Wr}(Z, Z_1) | ^2 b^{\frac{2}{3}}} \left |\int_{R_b^-}^{R_b} Z_{1} \hat{\Psi}_b  \right |^2,
\]
we get 
\[
 \lim_{|y| \to \infty} |y| |\zeta_b(y)|^2 = \lim_{r \to \infty} r |\tilde{Z}(r)|^2 =\Gamma_b.
\] 
Let us prove (\ref{gammabestimate}). We evaluate the Wronskian of $Z, Z_1$ at $0$ and remark that 
\[ \frac{2}{3} (s(2))^{\frac{3}{2}} = \frac{\pi}{2}, \]
to first deduce the equivalent for some $C>0$,
\begin{equation} \label{equivalentz}
  Z'(0) \sim C b^{1/6} e^{ \frac{\pi}{2b}},
\end{equation}
and then 
\[
 |\mathrm{Wr}(Z, Z_1)| \sim C b^{\frac{1}{6}} e^{  \frac{\pi}{2b}} .
\]
To estimate the inner product between $Z_1$ and $\hat{\Psi}_b$, let us introduce the solution $J$ to $J''-J=0, J(0)=1, J'(0)=0$ i.e. $J(r)= \mathrm{cosh} (r)$ which is formally the limit of $Z_1$ when $b$ goes to $0$. Using the expression of $\tilde{\Psi}_b$ and an integration by parts, the quantity
\[
\left |  \int_{R_b-}^{R_b} Z_{1} \hat{\Psi}_b \right | = \left | \int  Z_{1} \left ( \tilde{P}_b '' - \tilde{P}_b + \frac{b ^2 r^2}{4}  \tilde{P}_b - \tilde{P}_b ^5 \right ) \right | =\left | \int  Z_{1} \tilde{P}_b ^5 \right |
\]
converges by Lebesgue theorem to $|(J, Q^5)|>0$ since $Q,J>0$ and this shows (\ref{gammabestimate}). Now, let us prove the last point:
\[
 \int \left | \partial_r \left ( \tilde{Z} (r) e^{- \frac{i b r^2}{4}}  \right ) \right  |^2 dr \leq \Gamma_b^{1- C \eta}.
\]
We write 
\begin{eqnarray*}
 \partial_r \left ( \tilde{Z} (r) e ^{- i \frac{ b r^2}{4}} \right ) &= & \left ( \int_r ^{R_b} \frac{Z \hat{\Psi}_b }{\mathrm{Wr(Z, Z_1)}}  \right ) \partial_r \left ( Z_{1} e^{-i \frac{br^2}{4}} \right ) +\left ( \int_{R_b^-} ^{r} \frac{Z_{1} \hat{\Psi}_b }{\mathrm{Wr(Z, Z_1)}}  \right ) \partial_r \left ( Z e^{-i \frac{br^2}{4}} \right ) \\
                                                                                   & =& A(r) +B(r).
\end{eqnarray*}
At this point, we need bounds on $Z_1$ before the turning point. But $Z_1$ may be written as $Z_1= 2\mathrm{Re} ( \alpha Z)$ with
\[
\alpha = \frac{\overline{Z'(0)}}{\mathrm{Wr}(Z,\overline{Z})}.
\]
Using the behavior of $\mathrm{Ai}$ and $\mathrm{Ai'}$ at $- \infty$, we get 
\[
\mathrm{Wr}(Z,\overline{Z}) = \lim_{r \to + \infty} \mathrm{Wr}(Z,\overline{Z}) (r)=  i b^{1/3} \lim_{r \to + \infty} \mathrm{ Im } \ \mathrm{Ai} \left ( \frac{s(2 - br)}{b ^{2/3}}  \right )\overline{\mathrm{Ai}'}\left ( \frac{s(2 - br)}{b ^{2/3}}  \right ) = i D b^{1/3},
\]
for some real constant $D \neq 0$. This together with (\ref{equivalentz}) and  (\ref{formulaz}) allow us to deduce for $r \leq 2/b$:
\[
 | Z_1(r) |+ |Z_1'(r)|+ |Z_1''(r)| \leq  C e^{ \frac{\pi} {2b}(1+C \eta) }.
\]
These estimates with $|\hat{\Psi}_b| \leq \mathrm{exp}(-(1-C \eta) \pi / 2b )$ provide
\begin{eqnarray*}
 \int |A(r) |^2 dr &\leq& e^{ - \frac{2 \pi}{b}(1-C \eta) }   \left ( \int _{ 0}^{R_b} |  \partial_r Z_1 | ^2 + \int 
_{r \leq R_b} b^2 r^2 | Z_{1} |^2 \right )  \\
                          & \leq &  e^{ - \frac{2 \pi}{b}(1-C \eta) } \left(  \frac{ 4}{b^2}  e ^{ \frac{\pi}{b}(1+C \eta)} +4  e ^{ \frac{\pi}{b}(1+C \eta)}  \right )  \\
                         & \leq & \Gamma_b^{1- C \eta}.
\end{eqnarray*}
On the other hand,
\begin{eqnarray*}
  \int |B(r) |^2 dr &\leq&  e^{ -\frac{\pi}{b}(1-C \eta)} \int_{ R_b^-}^{\infty}  \left | \partial _r Z- \frac{ibr}{2} Z \right |^2 
\end{eqnarray*}
Computing with (\ref{formulaz}), we show that the above integral is finite with 
\[
 \left  |  \partial_r Z(r) - \frac{ibr}{2}  Z(r) \right | \leq C \frac{b^{\alpha} }{ r^{3/2}}
\]
for some $\alpha \in \R$ so that we obtain
\[
 \int | B(r)|^2 \leq \Gamma_b^{1-C \eta} 
\]
and this proves (\ref{zetabnorm}). For the uniqueness of $\zeta_b$, taking a generic solution $\hat{Z}$ of (\ref{eqonz}), writing down the variation of constants formula for this solution and imposing the boundary conditions $\hat{Z}'(0)=0, \hat {Z}(r) \mathrm{exp} ( -i b r^2 /4) \in \dot{H}^1 $, we easily obtain $\hat{Z} = \tilde{Z}$.

\end{proof}

\begin{rem}
 In our case, we will only need the one dimensional version of the lemma since from the assumption of radial symmetry, the problem will be reduced to a one dimensional equation. For studying problems without assumption of symmetry, we need to prove a multidimensional result; this is sketched in \cite{MerRap2004}.  Here, we have only detailed the one dimensional result. Note that if the dimension $d$ is equal to $2$, the equation 
\[
 Z''(r) + \frac{d-1}{r} Z'(r) - Z(r) + \frac{b^2 r^2}{4} Z(r)=0,
\]
which is the $d$-dimensional analogue of (\ref{eqonzh}), has two turning points of order $1/2$ and $2/b$ when $b$ tends to $0$. If the dimension is bigger than $2$, the equation is similar to the one dimensional one with only one turning point of order $2/b$. These 
facts may be seen by looking at the equation for $W(r)=r^{(d-1)/2}Z(r)$:
\[
 W''(r) +q(r) W(r)=0, \quad \textrm{where} \quad q(r)= \frac{b^2 r^2}{4}-1 + \left ( \frac{d-1}{2} - \frac{(d-1)^2 }{4} \right ) \frac{1}{r^2},
\]
and the study of the sign and vanishing points of $q(r)$.
\end{rem}

\begin{pro}[\textbf{Spectral property}, \cite{MerRap2005b} ] \label{spectral}
Let $\mathcal L_1$ and $\mathcal L _2$ be the one dimensional Schr\"odinger operators defined by 
\[
\mathcal L_1 = - \partial ^2_y +10 y Q^3 \partial_y Q, \qquad \mathcal L_2 = -\partial_y^2 +2 y Q^3  \partial_y Q,
\]
and $H$ the quadratic form defined for $\var \in H^1$ ,
\[
H (\var, \var)= ( \mathcal L_1 \var_1, \var_1) + ( \mathcal L_2 \var_2, \var_2).
\]
Then there exists a contant $\delta >0$ such that we have the property
\begin{eqnarray*}
\forall \var \in H^1( \R), \ \  H(\var, \var) & \geq & \delta    \left ( \int | \partial_y \var (y)  |^2 dy + \int | \var |^2 e^{- |y|} dy\right ) -\frac{1}{\delta}  \Big (  (\var_1, Q)^2 +
(\var_1, y^2 Q)^2 \\
 & & +(\var_1, yQ)^2+(\var_2, \Lambda Q)^2+(\var_2, \Lambda^2 Q)^2 + (\var_2, \partial_y Q)^2 \Big ).
\end{eqnarray*}
\end{pro}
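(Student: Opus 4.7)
The plan is to establish this coercivity via a compactness-contradiction argument coupled with a classification of the generalized kernel of $H$ under the stated orthogonality constraints. I would first reformulate the inequality as an equivalent \emph{constrained} coercivity: there exists $\delta_0 > 0$ such that every $\varepsilon \in H^1(\R)$ with $(\varepsilon_1,Q) = (\varepsilon_1,yQ) = (\varepsilon_1, y^2 Q) = (\varepsilon_2,\Lambda Q) = (\varepsilon_2,\Lambda^2 Q) = (\varepsilon_2, \partial_y Q) = 0$ obeys
\[
H(\varepsilon,\varepsilon) \geq \delta_0 \left( \int |\partial_y \varepsilon|^2 + \int |\varepsilon|^2 e^{-|y|} \right).
\]
The statement as written then follows by decomposing an arbitrary $\varepsilon$ as its orthogonal projection onto the six modulation directions plus a remainder in the constrained subspace, applying Cauchy--Schwarz to the cross terms, and absorbing them using the $1/\delta$ coefficient on the right-hand side.

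To prove the constrained coercivity I would argue by contradiction. Suppose it fails: then there is a sequence $\varepsilon^{(n)}$ satisfying the six orthogonality conditions with normalization $\|\partial_y \varepsilon^{(n)}\|_{L^2}^2 + \int |\varepsilon^{(n)}|^2 e^{-|y|} = 1$ and $H(\varepsilon^{(n)}, \varepsilon^{(n)}) \to \mu \leq 0$. The sequence is uniformly bounded in $\dot H^1(\R) \cap L^2(\R ; e^{-|y|}dy)$, hence after extracting a subsequence it converges weakly in $\dot H^1$ and strongly in $L^2_{\mathrm{loc}}$ to some limit $\varepsilon^\infty$. Since the multiplicative potentials in $\mathcal L_1$ and $\mathcal L_2$ decay exponentially thanks to the decay of $Q$ and $\partial_y Q$, the potential contributions to $H$ are continuous under this mode of convergence, while the kinetic part is weakly lower semicontinuous. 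Consequently $H(\varepsilon^\infty, \varepsilon^\infty) \leq \mu \leq 0$, and $\varepsilon^\infty$ inherits the six orthogonality conditions.

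The principal obstacle is then to show that no such nonzero $\varepsilon^\infty$ exists. A minimizer would satisfy an Euler--Lagrange system of the form $\mathcal L_1 \varepsilon_1^\infty \in \mathrm{span}\{Q, yQ, y^2 Q\}$ and $\mathcal L_2 \varepsilon_2^\infty \in \mathrm{span}\{\Lambda Q, \Lambda^2 Q, \partial_y Q\}$. Here one exploits the explicit expression $Q(y) = 3^{1/4}/\sqrt{\mathrm{cosh}(2y)}$ to classify the $H^1(\R)$ solutions of the homogeneous versions of these ODEs by variation of constants, identifying the admissible kernel elements with the natural generators arising from the scaling, translation, Galilean and pseudo-conformal symmetries of the 1D quintic NLS; the inhomogeneous problem then reduces to inverting $\mathcal L_j$ on the orthogonal complement of its kernel, which can be done explicitly modulo an ODE analysis leveraging the exponential decay at infinity. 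The six orthogonality conditions precisely kill all admissible directions in the resulting finite-dimensional solution space, forcing $\varepsilon^\infty = 0$. Finally, strong $L^2_{\mathrm{loc}}$ convergence and the exponential weight give $\int |\varepsilon^{(n)}|^2 e^{-|y|} \to 0$, while continuity of the potential part of $H$ yields $\|\partial_y \varepsilon^{(n)}\|_{L^2}^2 \to \mu \leq 0$; together these contradict the normalization. The hardest part of the whole argument is unquestionably the explicit ODE classification under the six constraints, and it is precisely this analysis that is carried out in \cite{MerRap2005b}.
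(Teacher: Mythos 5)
First, a point of comparison: the paper does not prove this proposition at all --- it is imported verbatim from \cite{MerRap2005b} (see also the remark following it), so there is no in-paper argument to measure yours against. Judged on its own terms, your outline reproduces the standard architecture (equivalence of the stated inequality with coercivity on the codimension-six subspace, then a compactness/contradiction argument), and the soft steps are sound: the potentials $10yQ^3\partial_yQ$ and $2yQ^3\partial_yQ$ and the six test directions decay exponentially, so the potential part of $H$ and the constraints pass to the weak limit, and a uniform tail bound of the form $\int_{|y|\geq R}|\varepsilon|^2e^{-|y|}\leq C(1+R)e^{-R}$ (obtained by anchoring $\varepsilon$ at a point near the origin and using the $\dot H^1$ bound) makes $\int|\varepsilon^{(n)}|^2e^{-|y|}$ continuous along the sequence, which is what the final contradiction needs.

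The gap sits exactly where you place the ``hardest part'', and it is not merely a deferred computation: the mechanism you propose for it is off. First, the weak limit $\varepsilon^\infty$ is not a critical point of $H$ alone; a minimizer of $H$ under the normalization $\|\partial_y\varepsilon\|_{L^2}^2+\int|\varepsilon|^2e^{-|y|}=1$ satisfies $\mathcal L_j\varepsilon_j^\infty-\mu\bigl(-\partial_y^2+e^{-|y|}\bigr)\varepsilon_j^\infty\in\mathrm{span}\{\cdot\}$ with a Lagrange multiplier $\mu\leq 0$, so the ODE system to classify is not $\mathcal L_j\varepsilon_j^\infty\in\mathrm{span}\{\cdot\}$. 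Second, and more seriously, the negative subspace and kernel of $\mathcal L_1,\mathcal L_2$ are \emph{not} generated by the scaling, translation, Galilean and pseudo-conformal symmetries of the quintic NLS: those symmetries describe the generalized kernel of the linearized operators $L_+=-\partial_y^2+1-5Q^4$ and $L_-=-\partial_y^2+1-Q^4$, whereas $\mathcal L_1=-\partial_y^2+\tfrac{5}{2}y(Q^4)'$ and $\mathcal L_2=-\partial_y^2+\tfrac{1}{2}y(Q^4)'$ are the virial operators, with no mass term and a strictly negative well $y(Q^4)'<0$; each therefore has essential spectrum $[0,\infty)$ and genuinely negative eigenvalues whose number must be counted directly from the explicit formula for $Q$, after which one must check that the six chosen directions are transverse to the corresponding eigenspaces. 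That index count and transversality verification is the entire content of the proposition; without it the contradiction argument cannot exclude $\varepsilon^\infty\neq 0$. Since you ultimately send the reader to \cite{MerRap2005b} for precisely this step --- which is also all the paper does --- your proposal is an honest expansion of the citation rather than a proof.
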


\begin{rem}
We may claim a similar property in all dimensions but this is proved only in the case of the dimension $1$ using the explicit formula of the ground state $Q$. In this paper, we will only use the one dimensional version.
\end{rem}

\section{Precise statement of the result} \label{precise statement}

\subsection{Statement of the Theorem}

In the sequel, we will often see a radial function $f(x)$ on $M$ as a function $f(r)$ on $[0, \rho)$; in terms of Sobolev spaces, this representation is not isometric and the function $h$ measures this fact. We introduce the Sobolev spaces for $s >0$~:
\begin{eqnarray*}
H^s_{\mathrm{r}}& = &\left \{  f \in L^2 (M) , \  f \ \textrm{radial}, \  f \in \mathrm{Domain} \left ((I- \Delta)^{\frac{s}{2}} \right ) \right \} \\
                & = & \left \{  f \in \mathcal D'((0, \infty)), \ h^{\frac{1}{2}} (I- \Delta) ^{\frac{s}{2}} f \in L^2(0, \rho) \right \}.
\end{eqnarray*} 

Let us now give the precise statement of the result formulated in the introduction.

\begin{thm} \label{theoreme1}
Let $(M,g)$ be a rotationally symmetric surface satisfying (\ref{growth}). Then there exists an open subset $\mathcal P$ of $H^2_{\mathrm{r}}(M)$ such that if $u_0$ is in $\mathcal P$, then the corresponding solution $u(t)$ blows up in finite time $T < \infty$ on a set $\{ x \in M, \ r(x)=r(T) \}$ for some $r(T) \in (0, \rho)$ at the log log speed. More precisely, $u$ satisfies the following properties. There exist parameters $\lambda (t)>0, \  r(t) \in (0,a),\  \gamma (t) \in \R,\ b(t) \in \R$, and  $\tilde{u}(t), u^{\ast} \in L^2(M), \var (t) \in L^2_{\mathrm{loc} }(\R) \cap \dot{H}^1(\R)$ such that \\

\noindent 1) Decomposition of the solution: 
\[
u(t,r)=\frac{1}{\sqrt{\lambda(t)}}  \tilde{Q}_{b(t)} \left (\frac{r-r(t)}{ \lambda (t)}  \right ) e^{i \gamma(t)}  + \tilde{u}(t,r)
\]
with the convergence
\[
\tilde{u}(t)  \to u^{\ast}  \textrm{ in } L^2 (M) , \textrm{ as } t \textrm{ goes to } T.
\]
2) $u$ blows up on a curve: the function $r(t)$ has a limit $r(T) \in (0,\rho) $ when $t$ goes to $T$ and 
\[
|u(t)|^2 \to \| Q\|_{L^2(\R)} ^2  \delta_{r(T)} + | u^{\ast} |^2 , \quad  \textrm{as} \ t \to T \quad \textrm{in the sense of measures}
\]
where $\delta_{r(T)}$ is the normalized Riemannian measure on the curve $\{ x \in M, r(x)=r(T) \}$.
3) $u$ blows up at the log log speed: there exists $C>0$ such that for all $t \in [0,T)$,
\[
\frac{1}{C} \left ( \frac{\log |\log (T-t)| }{T-t} \right )^{1/2} \leq \| \nabla u(t) \|_{L^2(M)} \leq C  \left ( \frac{\log |\log (T-t)| }{T-t} \right )^{1/2}.
\]
\end{thm}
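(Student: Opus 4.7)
The plan is to follow the Raphaël strategy \cite{Rap2006} from the Euclidean case, exploiting the radial symmetry to reduce (\ref{nls}) to an essentially one-dimensional problem in the radial variable, and treating the Riemannian metric as a perturbation of the flat one on the small concentration scale. For a radial solution $u(t,r)$, I would introduce the modulated ansatz
\[
u(t,r)=\frac{1}{\sqrt{\lambda(t)}}\,\tilde Q_{b(t)}\!\left(\frac{r-r(t)}{\lambda(t)}\right)e^{i\gamma(t)}+\tilde u(t,r),
\]
with four modulation parameters $(\lambda,r,\gamma,b)$ and remainder $\varepsilon(s,y)=\lambda^{1/2}\tilde u(t,r(t)+\lambda y)e^{-i\gamma}$, and impose four orthogonality conditions (of the Merle--Raphaël type against $\Lambda Q$, $\Lambda^2 Q$, $yQ$ and a $\rho_b$-type weight) which uniquely fix the parameters and allow inversion of the linearized operator on the profile manifold, as in \cite{MerRap2003}. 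Because the profile is centred at $r(t)\in(0,\rho)$ rather than at a pole, the concentration set will be the codimension-one sphere $\{r=r(T)\}$, giving a curve rather than a point.

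Substituting the ansatz into the radial form of (\ref{nls}), $i\partial_t u+\partial_r^2 u+(h'/h)\partial_r u+|u|^4 u=0$, and rescaling time by $ds/dt=1/\lambda^2$, produces an equation
\[
i\partial_s\varepsilon+L\varepsilon=\mathrm{Mod}(t)+N(\varepsilon)+E_{\mathrm{geo}},
\]
where $L$ is the usual linearized operator at $\tilde Q_b$, $\mathrm{Mod}(t)$ collects the modulation terms, $N$ is the nonlinearity, and
\[
E_{\mathrm{geo}}=\lambda\,\frac{h'(r(t)+\lambda y)}{h(r(t)+\lambda y)}\,\bigl(\partial_y\tilde Q_b+\partial_y\varepsilon\bigr).
\]
Taylor-expanding $h$ around $r(t)$ (smooth, with $h(r(t))>0$ as long as $r(t)$ stays in a compact subinterval of $(0,\rho)$) and using the bound $h'\le Ch$ from (\ref{growth}), every term in $E_{\mathrm{geo}}$ carries a factor $\lambda$. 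Projecting against the orthogonality directions yields modulation equations of the schematic form $(\lambda_s/\lambda,\,b_s,\,r_s/\lambda,\,\gamma_s)=(\text{Euclidean r.h.s.})+O(\lambda)+O(\|\varepsilon\|)$, so that the geometric corrections appear as a lower-order perturbation of the Euclidean modulation ODEs.

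The heart of the argument is the Lyapunov/monotonicity analysis of \cite{MerRap2003,MerRap2006}. I would build a localized virial-type functional $\mathcal J(s)$ assembled from $b$, $\int|\partial_y\varepsilon|^2$, a weighted localized mass of $\varepsilon$, and the radiation $\Gamma_b$ of Lemma \ref{lemzeta}, and show that it is almost monotone in $s$ under a bootstrap regime $|b|\ll 1$, $\|\varepsilon\|\ll 1$. The spectral coercivity of Proposition \ref{spectral} (applied in its one-dimensional form, thanks to radial symmetry) controls the quadratic form in $\varepsilon$ driving $d\mathcal J/ds$ modulo exactly the four orthogonality directions killed by the modulation. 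The nonlinear terms can be closed via the radial Sobolev embedding of Proposition \ref{sobolevradial}, which is legitimate because $\varepsilon$ is supported, after rescaling, on a region uniformly away from the poles, and the geometric contribution to $d\mathcal J/ds$ gains a factor $\lambda\sim|b|$ and is therefore absorbable.

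The main obstacle will be twofold. First, propagating the correct sign in the monotonicity formula in the presence of the first-order term $(h'/h)\partial_r u$: this is where the condition (\ref{growth}) is essential, as it prevents the drift term from dominating the virial. Second, and more delicate, is controlling the new parameter $r(t)$ and proving it has a finite limit $r(T)\in(0,\rho)$: the modulation ODE schematically gives $|r_s|\lesssim\lambda\|\varepsilon\|_{\mathrm{weighted}}$, so that $|r'(t)|\lesssim \lambda^{-1}\|\varepsilon\|_{\mathrm{weighted}}$, which is not \emph{a priori} integrable. The remedy is the refined estimate $\|\varepsilon\|_{\mathrm{weighted}}^2\lesssim \Gamma_b^{1-C\eta}$ delivered by the monotonicity, combined with the exponential smallness $\Gamma_b\sim Db^{-1}e^{-\pi/b}$ of Lemma \ref{lemzeta}, which is more than enough to make $\int_0^T|r'(t)|\,dt$ finite and to ensure $r(T)\in(0,\rho)$ provided the initial data is close enough to the ansatz; this selects the open set $\mathcal P$. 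Once these two points are established, the log log upper and lower bounds on $\|\nabla u\|_{L^2(M)}$, the convergence $b(t)\to 0^+$, the asymptotic $\lambda(t)\sim(T-t)^{1/2}(\log|\log(T-t)|)^{-1/2}$, the strong convergence $\tilde u\to u^\ast$ in $L^2(M)$, and the mass concentration $|u(t)|^2\rightharpoonup\|Q\|_{L^2}^2\,\delta_{r(T)}+|u^\ast|^2$ follow along the lines of \cite{MerRap2003,MerRap2006,Rap2006}.
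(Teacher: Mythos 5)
Your overall architecture matches the paper's: modulated ansatz with four parameters, Merle--Rapha\"el orthogonality conditions, reduction to a one-dimensional problem in $r$ with the drift $\frac{h'}{h}\partial_r$ treated as an $O(\lambda)$ perturbation thanks to (\ref{growth}), the virial/Lyapunov monotonicity with the radiation $\zeta_b$ of Lemma \ref{lemzeta}, the spectral coercivity of Proposition \ref{spectral}, and the integration of the modulation ODE for $r(t)$ using the integrability of $1/\lambda$ near $T$. All of this is essentially the route the paper takes.

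However, there is one genuine gap, and it is precisely the point where the Riemannian (in particular compact) setting departs from \cite{Rap2006}. When you linearize the energy, the sextic term $\int|\varepsilon|^6\mu(y)\,dy$ is \emph{not} localized by a power of $\tilde Q_b$, so it cannot be closed by the weighted norm $\mathcal E$ alone: you need smallness of the critical $H^{1/2}$ norm of $\tilde u$ away from the concentration curve (and this same smallness is needed again when identifying $u^\ast$). In the Euclidean case this is obtained from Strichartz estimates and the $H^{1/2}$ local smoothing effect of the free flow; on a compact surface such as $S^2$ local smoothing fails (discrete spectrum, no dispersion at infinity), so your implicit appeal to ``the lines of \cite{Rap2006}'' does not go through. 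The paper replaces it by an almost-conserved $H^2$ pseudo-energy
\[
E_2(u)=\int|\Delta u|^2-3\int|\nabla u|^2|u|^4-2\,\mathrm{Re}\int(\nabla u)^2|u|^2\overline{u}^2,
\]
in the spirit of \cite{RapSze2009}, shown to grow no faster than $\lambda^{-6-3\delta/2}$, which after time integration yields an $H^2$ bound outside the poles; this is then propagated down through a bootstrap of $H^{k/2}$ bounds, $k=1,\dots,4$, outside the singular curve to reach the $H^{1/2}$ smallness. This mechanism is the reason the open set $\mathcal P$ lives in $H^2_{\mathrm r}(M)$ rather than $H^1_{\mathrm r}(M)$ --- a feature of the statement your proposal never accounts for. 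Without this (or an equivalent substitute), the energy linearization, and hence the whole bootstrap, does not close on compact $M$.
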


\subsection{Description of the set of blow up initial data}  To prove our theorem and without loss of generality, we will make the assumption that $(M,g)$ is one of the two model space $(\R^2, g)$ or $(S^2, g)$. We also assume $h(1)=1$ so that the singular curve will be around $r=1<\rho$. 

\medskip

For convenience, we will note $d \mu_{ \lambda, r}(y)$ or $d \mu(y)$ if the dependance in $\lambda, r$ is explicit the measure
\[
d \mu(y) = h(\lambda y + r)\mathbf{1}_{y \geq -\frac{r}{\lambda}} dy .
\]
For $\alpha^{\ast}>0$, we introduce the set $\mathcal P(\alpha^{\ast})$ of initial data $u_0 \in H^2_{\mathrm{r}}(M)$ radially symmetric of the form 
\[
 u_0(r)= \frac{1}{\sqrt{\lambda_0}} \tilde{Q}_{b_0} \left (\frac{r-r_0}{ \lambda _0} \right) e^{i \gamma _0} + \tilde{u}_0(r),
\]
with $\tilde{u}_0 \in H^2_{\mathrm{r}}(M)$  and with the following properties: \\
\textbf{A1.} Closeness of $r_0$ to $1$:
\[
 |r_0-1| < \alpha ^ \ast,
\]
\textbf{A2.} Closeness of $\tilde{Q}_{b_0}$ to $Q$:
\[
 0 < b_0< \alpha ^{\ast},
\]
\textbf{Orthogonality conditions:}
\[
  ( \mathrm{Re} \ \var_0, |y|^2 \Sigma ) + ( \mathrm{Im} \ \var_0, |y|^2 \Theta ) = 0 ,
\]
\[
  ( \mathrm{Re} \ \var_0, y \Sigma ) + ( \mathrm{Im} \ \var_0, y \Theta ) = 0,
\]
\[
  ( \mathrm{Re} \ \var_0, \Lambda \Theta) - ( \mathrm{Im} \ \var_0, \Lambda \Sigma ) = 0,
\]
\[
  ( \mathrm{Re} \ \var_0, \Lambda^2 \Theta)- ( \mathrm{Im} \ \var_0, \Lambda ^2 \Sigma) =0,
\]
where
\[
  \var_0(y)= 
\left \{ 
\begin{array}{ccl}
 \sqrt{\lambda_0} e^{- i \gamma_0} u_0(\lambda_0 y+r_0) - \tilde{Q}_{b_0}(y)  & \textrm{if} & y \geq - \frac{r_0}{ \lambda_0}, \\
 0 & \textrm{if} & y < - \frac{r_0}{ \lambda_0},
\end{array}
\right.
\]
and
\[
 \Sigma= \mathrm{Re} ( \tilde{Q}_{b}), \qquad \Theta = \mathrm{Im} ( \tilde{Q}_b),
\]
\textbf{A3.} Smallness of $\var_0$ in a weighted norm:
\[
    \int \left | \partial_y \var_0 (y) \right |^2 \mu_{ \lambda_0, r_0}(y) dy + \int_{|y| \leq \frac{10}{b_0}} | \var_0(y)|^2 e^{-|y|} dy < \Gamma^{\frac{6}{7}}_{b_0},
\]
\textbf{A4.} Control of the energy and localized momentum:
\[
 \lambda_0 ^2 |E_0|+  \lambda_0 \left | \mathrm{Im} \int \nabla \psi \cdot \nabla u_0  \overline{u}_0 \right |^2 < \Gamma^{10}_{b_0},
\]
where
\[
\psi(r) = \left \{ 
\begin{array}{rcl}
1 & \textrm{if} & \frac{1}{2} \leq r \leq \frac{3}{2}, \\
0 & \textrm{if} & r \leq \frac{1}{4} \textrm{ and } r \geq 2,
\end{array}
\right .
\]
\textbf{A5.} Log-Log regime:
\[
 0 < \lambda_0 < \exp \left ( - \exp \left( \frac{8 \pi}{9 b_0} \right ) \right ),
\]
\textbf{A6.} $L^2$ smallness of $\tilde{u}_0$:
\[
 \| \tilde{u}_0 \|_{L^2} < \alpha ^{\ast}
\]
\textbf{A7.} $H^{\frac{1}{2}}$ and $H^{\frac{k}{2}}$ estimates, $k=2,3,4$, outside the singular curve:
\[
\begin{array}{rcl}
\displaystyle{\| u_0 \|_{H^{\frac{1}{2}} (|r-1| > 1/2)} }& < & \displaystyle{(\alpha^{\ast})^{\frac{1}{4}} },\\
 \displaystyle{\| u_0 \|_{H^{\frac{k}{2}}(|r-1| > 1/2) }} & < &\displaystyle{ \frac{1}{\lambda_0 ^{ k-2 }}}.
\end{array}
\]
\textbf{A8.} $H^2$ smallness outside the blow-up curve:
\[
\|  u_0 \|_{H^2(|r-1| > \frac{1}{32} )} < \alpha^{\ast},
\]
\begin{rem}
In rescaled variable, the Sobolev space $H^2_{\mathrm{r}}(M)$ defined above is transformed into a Sobolev space with weighted $\mu(y)$; if $u_0 \in \mathcal P( \alpha ^{\ast})$, then since $\tilde{u}_0 \in H^2_ {\mathrm{r}}$, $\var_0$ belongs to:
\[
H^2_{r_0, \lambda_0} = \left \{ \var \in \mathcal D'  (-\frac{r_0}{\lambda_0}, \frac{\rho-r_0}{\lambda_0}  ), \quad  \var, \   \partial_y \var, \  \partial^2_y \var \in L^2  ((- \frac{r_0}{\lambda_0}, \frac{\rho-r_0}{\lambda_0}  ), d\mu)  \right \}.
\]
In particular, the first term in the left hand side of $\textbf{A3}$ is well defined. Moreover, with $\textbf{A1}$, $\textbf{A2}$ and $\textbf{A5}$, the second term in $\textbf{A3}$ is also well defined.
\end{rem}

\begin{pro}
 The set $\mathcal P(\alpha^{\ast})$ is nonempty and open in $H^{2}_{\mathrm{r}}(M)$.
\end{pro}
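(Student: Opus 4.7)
The plan is to exhibit an explicit element of $\mathcal{P}(\alpha^\ast)$ and then to show openness via a modulation-theoretic implicit function theorem.

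\smallskip

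\emph{Non-emptiness.} Fix $\gamma_0 = 0$, $r_0 = 1$, $\tilde{u}_0 \equiv 0$, any small $b_0 \in (0, \min(\alpha^\ast, b^\ast(\eta)))$, and then choose $\lambda_0$ satisfying \textbf{A5}, so that $u_0(r) = \lambda_0^{-1/2} \tilde{Q}_{b_0}((r-1)/\lambda_0)$. By construction $\varepsilon_0 \equiv 0$ on its support, so the four orthogonality conditions and \textbf{A3} hold trivially, and \textbf{A1}, \textbf{A2}, \textbf{A6} are immediate. Since $\tilde{Q}_{b_0}$ is supported in $[-R_{b_0}, R_{b_0}]$ with $\lambda_0 R_{b_0} \ll 1/32$ by \textbf{A5}, $u_0$ vanishes on $\{|r-1| > 1/32\}$; this makes \textbf{A7}, \textbf{A8} trivial and forces the localized momentum in \textbf{A4} to vanish (its integrand is supported where $\nabla \psi \ne 0$, hence outside the support of $u_0$). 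Finally the change of variable $y = (r-1)/\lambda_0$ yields
\[
\lambda_0^2 |E_0| \;\leq\; C \sup_{|s| \leq \lambda_0 R_{b_0}} h(1+s) \cdot |E(\tilde{Q}_{b_0})| \;\leq\; C e^{(1+C\eta)\pi/b_0}
\]
by (\ref{energyestimate}), while $\Gamma_{b_0}^{10} \sim b_0^{-10} e^{-10\pi/b_0}$; the double-exponential smallness of $\lambda_0^2$ supplied by \textbf{A5} absorbs the ratio, giving \textbf{A4}.

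\smallskip

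\emph{Openness.} Let $u_0^{(0)} \in \mathcal{P}(\alpha^\ast)$ with modulation parameters $(\lambda_0, r_0, \gamma_0, b_0)$. On a neighborhood $V$ of these parameters, define
\[
\mathcal{F} : H^2_{\mathrm{r}}(M) \times V \longrightarrow \mathbb{R}^4
\]
by sending $(u, \lambda, r, \gamma, b)$ to the four scalars on the left-hand side of the orthogonality conditions, evaluated on $\varepsilon(y) := \sqrt{\lambda}\, e^{-i\gamma} u(\lambda y + r) - \tilde{Q}_b(y)$. At the basepoint $\mathcal{F} = 0$. The plan is to invert $\partial_{(\lambda, r, \gamma, b)} \mathcal{F}$ and invoke the implicit function theorem. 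Writing $v_0 = \tilde{Q}_{b_0} + \varepsilon_0$ and computing
\[
\partial_\lambda \varepsilon = \tfrac{1}{\lambda_0} \Lambda v_0,\qquad \partial_r \varepsilon = \tfrac{1}{\lambda_0} \partial_y v_0,\qquad \partial_\gamma \varepsilon = -i v_0,\qquad \partial_b \varepsilon = -\partial_b \tilde{Q}_{b_0},
\]
together with the asymptotics (\ref{closeness}) ($\tilde{Q}_b \to Q$ and $\partial_b \tilde{Q}_b \to -i|y|^2 Q/4$ in the exponentially weighted norm) and the smallness of $\varepsilon_0$ from \textbf{A3}, the $4 \times 4$ Jacobian becomes exponentially close (in $1/b_0$) to the diagonal matrix with entries
\[
-\tfrac{1}{\lambda_0}\!\int y^2 Q^2,\qquad -\tfrac{1}{2\lambda_0}\!\int Q^2,\qquad -\|\Lambda Q\|_{L^2}^2,\qquad \tfrac{1}{4}\!\int y^2 Q^2.
\]
The off-diagonal entries vanish at leading order by parity of $Q$ (so $\int yQ^2 = 0$) and the anti-self-adjointness of $\Lambda$ on $L^2(\R)$ (so $(Q, \Lambda Q) = 0$). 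Hence the Jacobian is invertible at the basepoint.

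\smallskip

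By the implicit function theorem, each $u$ in an $H^2_{\mathrm{r}}$-neighborhood of $u_0^{(0)}$ admits unique modulation parameters $(\lambda(u), r(u), \gamma(u), b(u))$ close to the original ones such that the orthogonality conditions hold, and these parameters depend continuously on $u$. The associated $\varepsilon(u)$ and $\tilde{u}(u)$ then also depend continuously on $u$ in the relevant weighted and $H^2_{\mathrm{r}}$ norms. Since each of \textbf{A1}--\textbf{A8} is a strict inequality on these continuously varying quantities, they all persist under sufficiently small perturbations, yielding openness.

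\smallskip

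The main obstacle is the Jacobian computation: although the leading-order diagonal form is clean, one must control the discrepancy between the flat Lebesgue integrals (in which the $Q$-based identities are transparent) and the surface measure $d\mu_{\lambda_0, r_0}$ on which $\varepsilon$ actually lives, and show that the remainders of size $O(\|\varepsilon_0\|)$ and of size $O(e^{-c/b_0})$ coming from (\ref{closeness}) do not destroy invertibility.
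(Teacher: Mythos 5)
Your openness argument is essentially the paper's: the paper also invokes a modulation lemma (an implicit function theorem near $(\lambda,r,\gamma,b,v)=(1,0,0,0,Q)$, citing \cite{MerRap2003}) to re-impose the orthogonality conditions on a nearby $u$ and then lets the continuity of the parameters propagate the strict inequalities \textbf{A1}--\textbf{A8}. That half is fine.

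The non-emptiness argument has a genuine gap at \textbf{A4}. With $\tilde u_0\equiv 0$ and $u_0(r)=\lambda_0^{-1/2}\tilde Q_{b_0}((r-1)/\lambda_0)$, the change of variables $r=\lambda_0 y+1$ gives
\[
\lambda_0^2 E_0 \;=\; 2\pi\left(\frac12\int |\partial_y\tilde Q_{b_0}|^2\,h(\lambda_0 y+1)\,dy-\frac16\int|\tilde Q_{b_0}|^6\,h(\lambda_0 y+1)\,dy\right)
\;=\;2\pi\,E(\tilde Q_{b_0})+O(\lambda_0/b_0),
\]
so the quantity $\lambda_0^2|E_0|$ is, to leading order, \emph{independent of} $\lambda_0$: the scaling has already been used, and you cannot invoke the double-exponential smallness of $\lambda_0$ from \textbf{A5} a second time to ``absorb the ratio.'' The only available bound on $E(\tilde Q_{b_0})$ is (\ref{energyestimate}), i.e.\ $|E(\tilde Q_{b_0})|\le\Gamma_{b_0}^{1-C\eta}$ (the displayed sign in the paper is a typo, as the later use ``$E(\tilde Q)=O(\Gamma_b^{1-C\eta})$'' shows), and $\Gamma_{b_0}^{1-C\eta}$ is far larger than the required threshold $\Gamma_{b_0}^{10}$. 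So \textbf{A4} is not satisfied by the pure profile. This is exactly why the paper does \emph{not} take $\var_0\equiv 0$: it takes $\var_0=\nu f$ for a fixed well-localized even $f$ and tunes the small parameter $\nu$. Since the energy is affine in $\var_0$ at first order with a generically nonzero linear coefficient, one can choose $\nu=O(\Gamma_{b_0}^{1-C\eta})$ so as to nearly cancel $E(\tilde Q_{b_0})$ and achieve $\lambda_0^2|E_0|<\Gamma_{b_0}^{10}$, while such a $\nu$ is small enough that \textbf{A3} ($\mathcal E<\Gamma_{b_0}^{6/7}$) and the orthogonality conditions (after a further modulation) still hold, and the compact support of $f$ preserves \textbf{A7}, \textbf{A8}. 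Your treatment of the localized momentum term (it vanishes because $\nabla\psi=0$ on the support of $u_0$) is correct and survives this modification.
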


\begin{proof}
The fact that the set is non empty is similar to the flat case. It consists in choosing $(\lambda_0, r_0, b_0)$ close to $(0,1,0)$ so that 
$\textbf{A1}, \textbf{A2}, \textbf{A5}$ and next taking $\var_0(y) = \nu f$  for a well localized even function $f$ and a parameter $\nu$ small to choose so that 
$\textbf{A3}$ and $\textbf{A4}$ hold. The support condition on $f$ then implies that $u$ is supported near $r=1$ and so $\textbf{A7}, \textbf{A8}$.  \\
The fact that $\mathcal P(\alpha ^{\ast})$ is open reflects the stability with respect to initial data of the log log regime.  Let us detail this point. Let $u_0 \in \mathcal P( \alpha^{\ast})$ and $u \in H^{2}_{\mathrm{r}} (M)$ such that $\|u_0-u\|_{H^2} \leq \eta$, for some $\eta >0$ to be chosen later. Then $u_0$ and $u$ write
\begin{eqnarray*}
u_0(r)&=&\frac{1}{\sqrt{\lambda_0}} e^{i \gamma_0} \left ( \tilde{Q}_{b_0} \left ( \frac{r-r_0}{\lambda_0}  \right ) + \var_0 \left ( \frac{r-r_0}{\lambda_0} \right ) \right ) ,\\
u(r)&=& \frac{1}{\sqrt{\lambda_0}} e^{i \gamma_0} \left ( \tilde{Q}_{b_0} \left ( \frac{r-r_0}{\lambda_0}  \right ) + \var \left ( \frac{r-r_0}{\lambda_0} \right ) \right ) ,
\end{eqnarray*}
for some $\var$ and $\var_0, \lambda_0, r_0, \gamma_0, b_0$ satisfying \textbf{A1}, \textbf{A8} and the orthogonality conditions. The function $\var$ certainly does not satisfy the orthogonality conditions but by modulation theory, we can slightly modify the parameters $\lambda_0, r_0, \gamma_0, b_0$ to remove this problem. This is state in the following lemma. First, we introduce a notation. For $\delta>0$, let 
\[
V_{\delta}= \{v \in H^1_{r_0, \lambda _0}, \| v-Q \| _{H^1_{r_0, \lambda _0} } < \delta \}.
\]
					  
\begin{lem}
There exist $\delta>0$, a neighboohood $V$ of $(1,0,0,0)$ in $(0, \infty) \times \R^3$ and a function $( \lambda, r, \gamma, b): V_{\delta} \to V$ such that for every $v \in V_{\delta}$, the function $\var$ defined by 
\[
\var (y)= \lambda ^{\frac{1}{2} }(v) e^{-i \gamma (v)} v( \lambda(v)y + r(v)) - Q_{b(v)} (y), 
\]
satisfies the orthogonality conditions
\begin{eqnarray*}
  (  \var_1, |y|^2 \Sigma ) + ( \var_2, |y|^2 \Theta )& = &0 ,  \\
  ( \var_1, y \Sigma ) + (  \var_2, y \Theta ) &=& 0, \\
  ( \var_1, \Lambda \Theta) - ( \var_2, \Lambda \Sigma ) &=& 0,  \\
    ( \var_1, \Lambda^2 \Theta)- ( \var_2, \Lambda ^2 \Sigma) &=&0, 
\end{eqnarray*}
where 
\[
\var_1= \mathrm{Re} \var, \qquad \var_2= \mathrm{Im} \var.
\]
\end{lem}
The proof of this lemma is classical and relies on a perturbation argument near the point $(\lambda, r, \gamma, b, v)=(1,0,0,0,Q)$ using the implicit function theorem. See \cite{MerRap2003} for details. Now, we apply the preceding lemma with the function 
\[
v(y)= \lambda_0^{\frac{1}{2}} e^{-i \gamma_0} u (\lambda_0 y+r_0)
\]
which is in $V_{\delta}$ (from \textbf{A3}, \textbf{A6}) for some $\delta >0$ if $\eta, \alpha^{\ast}$ are small enough. Setting
\[
\tilde{\lambda}=\lambda(v) \lambda_0, \quad  \tilde{r}= r(v) \lambda_0+ r_0, \quad \tilde{\gamma}=\gamma_0+ \gamma(v), \quad \tilde{b}=b(v),
\]
we deduce that the function 
\[
\tilde{\var}(y) =\sqrt{\lambda(v)} e^{- i \gamma (v)} v ( \lambda(v) y+ r(v)) - \tilde{Q}_{b(v)}(y) = \sqrt{\tilde{\lambda}} e^{-i \tilde{\gamma}} u(\tilde{\lambda} y+ \tilde{r}) - \tilde{Q}_{b} (y)
\]
satisfies the orthogonality conditions (\ref{orth1})-(\ref{orth4}). Moreover, by continuity of $(\lambda, r, \gamma, b)$, if $\eta$ is small, the parameters $(\tilde{\lambda}, \tilde{r}, \tilde{\gamma}, \tilde{b})$ are close to $(\lambda_0,r_0,\gamma_0,0)$ so that \textbf{A1}-\textbf{A8} are true. We deduce $v \in \mathcal P( \alpha^{\ast})$. Therefore, $\mathcal P( \alpha^{\ast})$ is open. 
\end{proof}

Let $u_0 \in \mathcal P(\alpha^{\ast})$ and $u(t)$ the corresponding solution. Since $\mathcal P(\alpha^{\ast})$ is open in $H^2_{\mathrm{r}}$ and $u$ is a continuous function of time, $u(t)$ stays in $\mathcal P(\alpha ^{\ast})$ at least for a small time and in particular, we have the decomposition:
\begin{equation}\label{decomposition}
u(t,r)= \frac{1} {\sqrt{\lambda (t)}} \tilde{Q}_{b(t)} \left ( \frac{r-r(t)}{\lambda (t)} \right ) e^{i \gamma (t)} +\tilde{u}(t,r) 
\end{equation}
with 
\[
\tilde{u}(t,r) = \frac{1} {\sqrt{\lambda (t)}} \var \left (t, \left ( \frac{r-r(t)}{\lambda (t)} \right ) \right ) e^{i \gamma (t)}.
\]
As for $\var_0$, we extend $\var (t)$ by $0$ for $ y < - \lambda (t) / r(t)$. Moreover, by the local theory, the following weaker estimates hold: for $\delta>0$ to be chosen later, \\
\textbf{B1.} 
\[
 |r(t)-1 | < (\alpha^{\ast})^{1/2} , 
\]
\textbf{B2.} 
\[
0<b(t)< (\alpha^{\ast})^{1/8},
\]
\textbf{Orthogonality conditions.}
\begin{equation} \label{orth1}
  ( \mathrm{Re} \ \var (t) , |y|^2 \Sigma ) + ( \mathrm{Im} \ \var (t), |y|^2 \Theta ) = 0,
\end{equation}
\begin{equation} \label{orth2}
  ( \mathrm{Re} \ \var (t) , y \Sigma ) + ( \mathrm{Im} \ \var (t) , y \Theta ) = 0,
\end{equation}
\begin{equation} \label{orth3}
  ( \mathrm{Re} \ \var (t) , \Lambda \Theta) - ( \mathrm{Im} \ \var (t) , \Lambda \Sigma ) = 0,
\end{equation}
\begin{equation} \label{orth4}
  ( \mathrm{Re} \ \var (t) , \Lambda^2 \Theta)- ( \mathrm{Im} \ \var (t), \Lambda ^2 \Sigma) =0,
\end{equation}
\textbf{B3.} 
\[
\mathcal E(t):= \int | \partial _y \var (t)  |^2 \mu_{ \lambda (t), r(t)} dy + \int_{ |y| \leq \frac{10}{b(t)}}  |\var (t) |^2 e^{-|y|} dr < \Gamma_{b(t)}^{3/4}, 
\]
\textbf{B4.} 
\[
\lambda^2(t) |E_0| <\Gamma_{b(t)}^2,
\]
\textbf{B4'.} 
\[
\lambda  (t) \left | \mathrm{Im} \left ( \int \nabla  \psi \cdot \nabla u(t) \overline{u}(t)  \right )   \right | <\Gamma_{b(t)}^2,
\]
\textbf{B5.} 
\[
0< \lambda (t) < e^{- e^{\frac{\pi}{10 b(t)}}}, \label{B5}
\]
\textbf{B6.}
\[ 
\| \tilde{u}(t)\|_{L^2} < (\alpha^{\ast})^{1/10},
\]
\textbf{B7.} For $k=2,3,4$,
\[
\begin{array}{rcl}
\displaystyle{\|u(t)\|_{H^{\frac{1}{2}}( \{ x , |r(x)-1| > \frac{1}{2})}} &<& \displaystyle{(\alpha^{\ast})^{\frac{1}{10}}}, \\
\displaystyle{\| u(t) \|_{H^{\frac{k}{2}}(x, |r(x)-1| > \frac{1}{2})}} &< &\displaystyle{\frac{1}{\lambda (t) ^{ k-2+ (5-k) \delta}}}.
\end{array}
\]
We denote $t_1=t_1(\delta)>0$ the maximal time for which the estimates \textbf{B1}-\textbf{B7} hold. 
\begin{pro} \label{prop1}
If $\delta$ is small enough then the estimates \textbf{B1}-\textbf{B7} hold for all time $t \in [0, T)$ i.e. $t_1=T$.
\end{pro}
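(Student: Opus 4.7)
The plan is a bootstrap/continuity argument. By local well-posedness in $H^2_{\mathrm{r}}(M)$ together with the modulation lemma (which builds $\lambda(t),r(t),\gamma(t),b(t)$ and $\var(t)$ continuously as long as $u(t)$ lies in a small $H^1$-neighborhood of the modulated family), the estimates \textbf{B1}-\textbf{B7} are open conditions in $t$, and they hold strictly at $t=0$ by \textbf{A1}-\textbf{A8}. So if $t_1<T$, some \textbf{B} inequality must saturate at $t_1$, and it suffices to show that on $[0,t_1)$ each of them can be strictly improved, e.g.\ recovered at the \textbf{A}-level (or with an extra universal factor $<1$).

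First I would transcribe \eqref{nls} into the rescaled variables $y=(r-r(t))/\lambda(t)$, $s=\int_0^t \lambda^{-2}(\tau)\,d\tau$. Substituting \eqref{decomposition} produces for $\var(s,y)$ a perturbed quintic NLS in 1D whose sources are: the usual modulation terms; the profile remainder $\Psi_b$ controlled via Lemma \ref{lemzeta}; and a new \emph{geometric} term coming from $(h'/h)\partial_r$ in the Laplacian. The latter, rescaled, is a first-order operator of size $O(\lambda(t))$, which by \textbf{B5} is super-exponentially smaller than $\Gamma_{b(t)}$; the growth assumption \eqref{growth} ensures that integrated against the weight $d\mu_{\lambda,r}$ appearing in \textbf{B3} no large factor is produced, and that $h(\lambda y+r)/h(r)=1+O(\lambda|y|)$ on $|y|\le 10/b$ so $d\mu$ is comparable to $dy$ on the relevant region. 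Differentiating the orthogonality conditions \eqref{orth1}-\eqref{orth4} in $s$ then yields the standard modulation system
\[
\Big|\tfrac{\lambda_s}{\lambda}+b\Big|+|b_s|+\Big|\tfrac{r_s}{\lambda}\Big|+|\gamma_s-1|\;\lesssim\;\mathcal E(t)^{1/2}+\Gamma_{b(t)}^{1-C\eta}+\lambda(t),
\]
identical to \cite{Rap2006} up to the harmless $O(\lambda)$ geometric correction.

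The core improvement is a Lyapunov functional assembled from the energy, the $L^2$-mass, the localized momentum (through the cutoff $\psi$ of \textbf{A4}), and a correction involving the radiation $\zeta_b$ of Lemma \ref{lemzeta}. Its approximate monotonicity in $s$ follows from the conservation of mass and energy (extended to $M$ via integration by parts against $d\mu$ and the growth condition \eqref{growth}) together with the spectral coercivity of Proposition \ref{spectral}, whose hypotheses are exactly the orthogonality conditions \eqref{orth1}-\eqref{orth4}. Integrating this Lyapunov monotonicity upgrades \textbf{B3} from $\Gamma_b^{3/4}$ back to $\Gamma_b^{6/7}$, then forces $\lambda_s^2 \sim \Gamma_{b(s)}$ and the log-log quantization $b(s)\sim 1/\log|\log(T-t)|$, strictly improving \textbf{B5}; energy and momentum conservation simultaneously improve \textbf{B4} and \textbf{B4'} back to $\Gamma_b^{10}$. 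From $|r_s/\lambda|\lesssim \mathcal E(t)^{1/2}+\Gamma_b^{1/2}$ and the log-log decay of $\lambda$, $r(t)$ is Cauchy with $|r(t)-r_0|\lesssim \alpha^\ast$ small, strictly improving \textbf{B1}; and \textbf{B2}, \textbf{B6} follow directly from the improvements of $\mathcal E$ and the conservation of $\|u\|_{L^2}$.

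The main obstacle --- beyond the careful bookkeeping of the geometric corrections above --- is propagating \textbf{B7} strictly. I would fix a smooth cutoff $\chi$ supported in $\{|r-1|>1/2\}$ and equal to $1$ on $\{|r-1|>1\}$, write the equation for $\chi u$, and invoke the Strichartz estimates with loss of derivatives of \cite{BurGerTzv2004} (valid under \eqref{growth} in the radial setting). The commutator $[\Delta,\chi]u$ is supported in a band away from the singular curve where the $H^1$ norm of $u$ is controlled using the radial Sobolev embedding of Proposition \ref{sobolevradial} together with \textbf{B3} on the inner region and the previous iterate of \textbf{B7} on the outer region. A Gronwall argument on $[0,t]$ using the bound $\|\nabla u(t)\|_{L^2}\lesssim \lambda(t)^{-1}$ from \textbf{B3} plus the profile size, and exploiting the strict margin $\delta>0$ in the exponent $k-2+(5-k)\delta$, gives the required strict improvement of \textbf{B7}. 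This step is the only one with a genuinely geometric flavor; once executed, the bootstrap closes and $t_1=T$.
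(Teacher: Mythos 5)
The first three paragraphs of your plan track the paper's actual route: modulation equations with the geometric term $(h'/h)\partial_r$ treated as an $O(\lambda)$ perturbation, linearized conservation laws, the virial/refined-virial estimates with the radiation $\zeta_b$, the spectral coercivity of Proposition \ref{spectral}, and a Lyapunov functional whose monotonicity closes \textbf{B1}--\textbf{B6}. That part is sound, if compressed (the paper separates the plain $H^1$ virial, the refined virial for $\tilde\var=\var-\tilde\zeta_b$, the dispersion of the localized $L^2$ mass, and the functional $\mathcal J$ into distinct lemmas, and it is the combination of all four that yields the upper bound on $b$ and the improvement of \textbf{B3}).

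The genuine gap is in your treatment of \textbf{B7}. You propose to propagate the $H^{k/2}$ bounds outside the singular curve by cutting off, invoking the Strichartz estimates with loss of \cite{BurGerTzv2004}, and running Gronwall on the commutator term. This is essentially the Euclidean argument of \cite{Rap2006}, and it does not survive on a compact $M$: the mechanism that closes it in $\R^2$ is the local smoothing effect (a gain of half a derivative locally for the linear flow), not Strichartz with loss, and local smoothing is false on compact manifolds because of the discrete spectrum of $\Delta$. Concretely, the commutator $[\Delta,\chi]u$ is first order, so estimating $\chi u$ in $H^s$ requires controlling $u$ in $H^{s+1/2}$ (in an $L^2$-in-time sense) on a slightly larger annulus; without a smoothing gain this ladder never terminates and the Gronwall loop loses derivatives at every rung. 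The paper's essential new ingredient — entirely absent from your proposal — is the almost-conserved $H^2$ pseudo-energy $E_2(u)=\int|\Delta u|^2-3\int|\nabla u|^2|u|^4-2\,\mathrm{Re}\int(\nabla u)^2|u|^2\overline{u}^2$, following \cite{RapSze2009}, whose near-conservation (with error $\lambda^{-6-3\delta/2}$) supplies the top bound $\|u\|_{H^2(|r-1|>1/2)}\lesssim\lambda^{-2-\delta}$; only then does the interior estimate (Lemma \ref{interior}), which trades $H^s$ control of $\chi u$ for $L^2_tH^{\max(1,s+1/2)}$ control of $u$ on a larger set, cascade down to $H^{3/2}$, $H^1$ and finally the $H^{1/2}$ smallness. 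This is also why the stability set $\mathcal P$ lives in $H^2_{\mathrm{r}}$ rather than $H^1$. Without this step your bootstrap does not close.
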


A big part of this paper is devoted to prove proposition \ref{prop1}. For this, we will show that for $t \in [0, t_1)$, we can obtain better estimates than  \textbf{B1}-\textbf{B7} and so necessarily $t_1=T$. More precisely, we will prove the following. \\
\textbf{C1.} 
\[ |r(t)-1 | \leq (\alpha^{\ast})^{\frac{2}{3}} , \]
\textbf{C2.} 
\[
0<b(t) \leq (\alpha^{\ast})^{1/5},
\]
\textbf{C3.}
\[
\int | \partial _y \var (t)  |^2 \mu_{ \lambda (t), r(t)} dy + \int_{ |y| \leq \frac{10}{b(t)}}  |\var (t) |^2 e^{-|y|} dy \leq \Gamma_{b(t)}^{4/5}, 
\]
\textbf{C4.}
\[
\lambda^2(t) |E_0| \leq\Gamma_{b(t)}^4,
\]
\textbf{C4'.}
\[
\lambda  (t) \left | \mathrm{Im} \left ( \int \nabla \psi \cdot \nabla u(t) \overline{u}(t)  \right )   \right | \leq \Gamma_{b(t)}^4,
\]
\textbf{C5.}
\[
0< \lambda (t) \leq e^{- e^{\frac{\pi}{5 b(t)}}},
\]
\textbf{C6.}
\[ 
\| \tilde{u}(t)\|_{L^2} \leq (\alpha^{\ast})^{1/5},
\]
\textbf{C7.} For $k=2,3,4$,
\[
\begin{array}{rcl}
\displaystyle{\|u(t)\|_{H^{\frac{1}{2}}(|r-1| > \frac{1}{2})}} &\leq & \displaystyle{(\alpha^{\ast})^{\frac{1}{5}}}, \\
\displaystyle{\| u(t) \|_{H^{\frac{k}{2}}(|r-1| > \frac{1}{2})}} & \leq &\displaystyle{\frac{1}{2 \lambda (t) ^{ k-2+ (5-k) \delta}}}.
\end{array}
\]

\bigskip

\textbf{Sketch of the proof.} Let us sketch the proof of the theorem. Following the work \cite{Rap2006} in the Euclidean case, the idea is first to use the radial assumption of the manifold and the functions to rewrite the equation in these coordonates $(r, \theta)$ and see the new equation as a one dimensional equation and then use the log log $L^2$-critical theory. In the new variables, the Laplace operator is not exactly the one dimensional Laplacian $\partial_r^2$ and there is an other term that we will treat as a perturbation; for a radial function $u$, 
\[
\Delta_M u =\partial_r^2 u  + \frac{h'}{h} \partial_r u.
\]
The stability of the log log regime suggests that this term will be negligeable. Then one may see the derivation of the log log speed as a consequence of modulation theory and linearization of conservation laws or more generally remarquable identities that solutions of Schr\"odinger equation satisfy. First, by modulation theory, we may write our solution $u(x)=u(r)$ as 
\[
u(x)= \frac{1}{ \sqrt{ \lambda (t)}} \left ( \tilde{Q_b} ( \frac{r-r(t)}{\lambda (t)} ) + \var(t, \frac{r-r(t)}{\lambda (t)} ) \right ) e^{i \gamma (t)} ,
\] 
for some parameters $\lambda (t), r(t), b(t), \gamma (t)$ and a rest $\var$ small in some sense. The study of the modulation parameters does not rely on conservation laws and consists in multiplying the equation for $u$ by some well chosen quantities to extract relations between the parameters under differential forms and the rest $\var$. To exploit these relations, we need dispersive properties of $\var$ that are consequences of the linearization of a virial type identity for the one dimensional equation (i.e. in $r$ variable) and conservation laws. The linearization of the virial will make appear a quadratic form which is positive except for a finite number of negative directions. Two arguments allow to treat these directions. First, from modulation, we may impose a certain number of orthogonality conditions to vanish some order one in $\var$ inner products and by linearizing conservation laws, we deduce estimates on the other negative directions. Note that the radial symmetry is an essential assumption to see the two dimensional equation as a one dimensional one but we do not need any other symmetry, except the classical ones: phase and translation in time. In particular, no translation in space is required to have a momentum conservation law. This approach is the same like for the study of stability of ground state for NLS or solitary waves for KdV: we expand in $\var$ a conservation law, vanish order one inner products and use the positivity of the second order term to bound the rest in the decomposition of the solution. When linearizing the energy, we will need a smallness of the critical norm $H^{1/2}$ of the rest $\tilde{u}$ outside the location of singularities. In the case where the manifold is non compact, we may use the good dispersive behavior of the linear flow to see this smallness as a consequence of the $H^{1/2}$ local smoothing effect. In the general case, we use a strategy again based on (almost) conservation laws and developped in \cite{RapSze2009}. It consists in introducing an $H^2$ energy and proving that it is in some sense subcritical and this allows to first deduce an $H^2$ estimate and then the $H^{1/2}$ smallness.

\medskip

The paper is organized as follow. In section \ref{eqforvar}, we linearize the equation around the modulated ground state $\tilde{Q}_b$ and thus deduce the equation satisfied by the rest $\var$. In the next section, we derive controls on some inner products using conservation laws: conservation of mass, energy and momentum. The proof consists in linearizing the conservation laws and deduce estimates on the dominant terms. Note that for this part, we need the smallness of the critical norm. In the section \ref{estiparadiff}, we treat the finite dimensional part and derive estimates on the parameters under differential form. In section \ref{virialloc}, we give the virial estimates which allow to treat the infinite dimensional part $\var$. Combining the two previous sections, we may integrate the differential inequalities to deduce the first estimates on the parameters. This is done in section \ref{sectionestimates}. In the next section, we proved the refined virial estimate to obtain the lower bound on the blow up rate. In section \ref{sectionnorm}, we prove the smallness of the $H^{1/2}$ norm outside the blow up curve. Once we have proved all the bootstrap conclusions, we show that these estimates are sufficient to conclude the proof of the theorem.

\section{\texorpdfstring{Linearization of the equation}{Linearization of the equation}} \label{eqforvar}

In this section, we write the equation (\ref{nls}) in term of $\var$. We introduce the change of time variable
\begin{equation} \label{s0}
s= s_0 + \int_{0}^t \frac{d \tau}{ \lambda^2(\tau)}, \qquad, \textrm{ avec } s_0=e^{ \frac{3 \pi}{4 b_0}}.
\end{equation}
The corresponding final time is $s_1=s(t_1)$. Remark that $s_1$ may eventually be infinite depending on the value of $t_1$ and the behaviour of $\lambda(t)$. Then, $\tilde{Q}_{b}$ satisfies
\[
\partial_y^2 \tilde{Q}_b + \frac{ \lambda (s)}{ \mu(y)}\partial_y \tilde{Q}_b - \tilde{Q}_b + ib \Lambda \tilde{Q}_b + \tilde{Q}_b | \tilde{Q}_b |^4= - \tilde{\Psi}_b,
\]
where
\[
\tilde{\Psi}_b= \Psi_b  - w \lambda \partial_y \tilde{Q}_b, \qquad w=w(t,y)=\frac{h'( \lambda (t)y + r(t))}{h (\lambda (t) y +r(t))}
\]
In the rest of the paper, we will often remove the $b$ parameter and write for instance $\tilde{Q}$ for $\tilde{Q}_b$. Thus starting from (\ref{nls}), a direct computation gives 
\begin{eqnarray}
  \label{eq1} \partial_s \Sigma + \partial_s \var _1 - M_{-}( \var) + b \Lambda \var_1 &=& \left ( \frac{\lambda_s}{\lambda} +b \right ) \Lambda \Sigma + \tilde{\gamma}_s \Theta + \frac{r_s}{ \lambda}  \partial_y \Sigma \\ \nonumber
                                                                                             & &+ \left( \frac{\lambda_s}{\lambda} + b \right ) \Lambda \var_1 + \tilde{ \gamma}_s \var_2 + \frac{r_s}{\lambda}  \partial_y \var_1  \nonumber \\ 
                                                                                             & & + \mathrm{Im} (\tilde{\Psi}) - R_2 ( \var)  \nonumber \\  
   \label{eq2} \partial_s \Theta + \partial_s \var _2 + M_{+}( \var) + b \Lambda \var_2 &=& \left ( \frac{\lambda_s}{\lambda} +b \right ) \Lambda \Theta - \tilde{\gamma}_s \Sigma + \frac{r_s}{ \lambda}  \partial_y\Theta \\ \nonumber
                                                                                             & &+ \left( \frac{\lambda_s}{\lambda} + b \right ) \Lambda \var_2 - \tilde{ \gamma}_s \var_1 + \frac{r_s}{\lambda} \partial_y\var_2   \\ \nonumber
                                                                                            & & - \mathrm{Re} (\tilde{\Psi}) + R_1 ( \var) \\ \nonumber
\end{eqnarray}
where we denote $\tilde{\gamma}(s)=-s+\gamma(s), \  M=(M_{+}, M_{-})$ the operators:
\begin{eqnarray*}
 M_{+}(\var)&=&- \partial_{y}^2 \var_1 - \lambda  w \partial_y \var_1 + \var_1 - \left( \frac{4 \Sigma^2}{ |\tilde{Q}|^2} +1 \right) |\tilde{Q}|^{4} \var_1 -  4 \Sigma \Theta  |\tilde{Q}|^{2} \var_2 , \\
 M_{-}(\var)&=& -\partial_y^2 \var_2- \lambda w \partial_y \var_2 + \var_2 - \left( \frac{4 \Theta^2}{|\tilde{Q}|^2} +1 \right) |\tilde{Q}|^{4} \var_2 - 4 \Sigma \Theta |\tilde{Q}|^2\var_1 ,
\end{eqnarray*}
and $R_1, R_2$ the nonlinear terms defined by
\begin{eqnarray*}
 R_1(\var)&=& (\var_1+ \Sigma) | \var+ \tilde{Q}|^{4}- \Sigma | \tilde{Q}|^{4}- \left( \frac{4 \Sigma^2}{|\tilde{Q}|^2} +1 \right) |\tilde{Q}|^{4} \var_1 - 4 \Sigma \Theta  |\tilde{Q}|^2 \var_2 ,\\
R_2(\var) &=& (\var_2+ \Theta) | \var+ \tilde{Q}|^{4} - \Theta | \tilde{Q}|^{4}- \left( \frac{4 \Theta^2}{ |\tilde{Q}|^2} +1 \right) |\tilde{Q}|^{4} \var_2 - 4 \Sigma \Theta  |\tilde{Q}|^2  \var_1.
\end{eqnarray*}
Note that $M$ is essentially the linearized operator close to the modified ground state $\tilde{Q}_b$ for the quintic equation posed in $\mathbb R$ plus order one terms that we hope to be negligeable. Formally, in the limit $b \to 0$, $M$ tends to the linearized operator around $Q$, $L=(L+, L-)$, of the quintic equation in $\R$ where
\[
L_+=- \partial_y^2+1-5 Q^4, \qquad L_-=- \partial_y^2 +1-Q^4.
\]
\section{Linearization of the conservation laws}  \label{sectionconservation}
In the following proposition and in the rest of the paper, $\delta( \alpha ^{\ast})$ is a quantity that tends to $0$ as $\alpha ^{\ast}$ goes to $0$.
\begin{pro}
For all $s \in [s_0, s_1)$, we have the following controls \\

\noindent (\romannumeral 1) Estimate induced by mass conservation:
\begin{equation}
 d_0 b^2(s) + \int | \tilde{u} (t)|^2 \leq (\alpha ^{\ast})^{\frac{1}{2}}.  \label{mass}
\end{equation}
(\romannumeral 2) Estimate induced by energy conservation:
\begin{equation} 
\left| 2(\var_1, \Sigma ) + 2 ( \var_2, \Theta)- \int | \partial_y \var |^2 \mu(y) dy +5 \int_{ |y| \leq \frac{10}{b}} Q^4 |\var_1|^2 +  \int_{ |y| \leq \frac{10}{b}} Q^4 | \var_2 |^2  \right | \label{energy}
\end{equation}
\begin{flushright}
$ \leq \Gamma_b^{1- C \eta} + \delta ( \alpha^{\ast}) \mathcal E (t) . $
\end{flushright}
 (\romannumeral 3) Estimate induced by the smallness of the localized momentum:
\begin{equation}
| (  \var_2 , \partial_y \Sigma ) | \leq \delta( \alpha ^{\ast}) \mathcal E^{\frac{1}{2}}(t) + \Gamma_{b}^2. \label{momentum}
\end{equation}
\end{pro}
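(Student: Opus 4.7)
\smallskip

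\textbf{Plan.} All three estimates are obtained by inserting the decomposition $u=\tfrac{1}{\sqrt{\lambda}}(\tilde Q_b+\var)e^{i\gamma}$ (with $y=(r-r(t))/\lambda$) into the appropriate (almost) conservation law, changing to the rescaled variable so that the weight becomes the measure $d\mu(y)=h(\lambda y+r(t))\mathbf 1_{y\geq -r/\lambda}\,dy$, and then performing a Taylor expansion in $\var$ around $\tilde Q_b$. The linear contribution will be eliminated either because $\tilde Q_b$ (almost) solves its profile equation---leaving a remainder in $\Psi_b$ controlled by (\ref{gammabestimate}) and (\ref{zetabnorm})---or because the orthogonality conditions (\ref{orth1})--(\ref{orth4}) kill it. Geometric corrections of the form $\lambda w\,\partial_y$ and the deviation of $d\mu$ from $dy$ are treated as perturbations: the key inputs are the assumption $h'\leq Ch$, the localization of $\tilde Q_b$ in $[-R_b,R_b]$, and above all the quasi-exponential smallness of $\lambda$ coming from \textbf{B5}, which makes any extra factor of $\lambda$ (or even $\lambda R_b\sim \lambda/b$) negligible compared to $\Gamma_b$.

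\smallskip

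\textbf{(i) Mass.} Starting from $M(u(t))=M(u_0)$ and writing in the rescaled variable,
\[
M(u(t))=C\!\int |\tilde Q_b+\var|^2\,d\mu = C\!\int |\tilde Q_b|^2\,d\mu+2C\,\mathrm{Re}\!\int \tilde Q_b\overline{\var}\,d\mu+\|\tilde u\|_{L^2(M)}^2.
\]
Because $\tilde Q_b$ is localized in $|y|\leq R_b$ and $\lambda R_b$ is tiny by \textbf{B5}, Taylor expansion of $h$ around $r(t)\simeq 1$ together with (\ref{supercriticalmass}) gives
$\int|\tilde Q_b|^2\,d\mu=\int Q^2+d_0 b^2+o(b^2)$. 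The cross term is controlled crudely by Cauchy--Schwarz, using $\|\tilde u\|_{L^2}\leq (\alpha^*)^{1/10}$ from \textbf{B6}. Comparing with $M(u_0)$, which by assumptions \textbf{A1}--\textbf{A8} differs from $\int Q^2$ by at most $(\alpha^*)^{1/2}$, yields (\ref{mass}).

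\smallskip

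\textbf{(ii) Energy.} Rewriting $E(u)$ in the $y$ variable,
\[
\lambda^{2}E(u)=\tfrac{\pi}{1}\Big(\tfrac{1}{2}\!\int |\partial_y(\tilde Q_b+\var)|^2\,d\mu-\tfrac{1}{6}\!\int |\tilde Q_b+\var|^6\,d\mu\Big),
\]
I expand to second order in $\var$. The constant term is $\lambda^{2}E(\tilde Q_b)$, which is bounded by $\Gamma_b^{1-C\eta}$ thanks to (\ref{energyestimate}) combined with the huge smallness of $\lambda$ given by \textbf{B5}. After integration by parts (picking up a geometric term $\lambda w\,\partial_y\tilde Q_b$) and use of the profile equation $\partial_y^2\tilde Q_b-\tilde Q_b+ib\Lambda\tilde Q_b+\tilde Q_b|\tilde Q_b|^4=-\Psi_b$, the linear part in $\var$ reduces to $-2(\var_1,\Sigma)-2(\var_2,\Theta)$ up to error terms of size $\Gamma_b^{1-C\eta}$ (from $\Psi_b$) and $\lambda\cdot\mathrm{something}\leq\Gamma_b^{10}$ (from the curvature correction, using $h'\leq Ch$). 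The quadratic part yields the expected Hessian $\int|\partial_y\var|^2\,d\mu-5\int Q^4\var_1^2-\int Q^4\var_2^2$, localized to $|y|\leq 10/b$ because $\tilde Q_b\equiv Q_b$ on this set and has exponential decay there (\ref{closeness}). Cubic and higher terms in $\var$ are controlled by $\delta(\alpha^*)\mathcal E$ using the weighted radial Sobolev embedding of Proposition \ref{sobolevradial} and the smallness of $\mathcal E$ from \textbf{B3}. Combining everything with $|\lambda^{2}E(u)|\leq \Gamma_b^2$ from \textbf{B4} gives (\ref{energy}).

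\smallskip

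\textbf{(iii) Momentum.} The localized quantity $\mathrm{Im}\!\int \nabla\psi\cdot\nabla u\,\bar u$ is controlled by $\Gamma_b^2/\lambda$ through \textbf{B4'}. Writing it in the rescaled variables and using that $\psi\equiv 1$ on a neighborhood of $r(t)\simeq 1$ so that only $\partial_y\psi$ (supported away from $y=0$ at scale $1/\lambda$) contributes together with the cross terms, the leading contribution after expanding in $\var$ is $(\var_2,\partial_y\Sigma)$, while the other pieces (involving products with $\partial_y\tilde Q_b$ that vanish by zero-momentum of $\tilde Q_b$, $\var_1$-terms paired with odd factors, and higher-order parts) are bounded by $\delta(\alpha^*)\mathcal E^{1/2}$ using Cauchy--Schwarz and Proposition \ref{sobolevradial}. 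This produces (\ref{momentum}).

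\smallskip

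\textbf{Main obstacle.} The real technical work concentrates on the energy estimate (ii): one must track the cancellations between the profile equation $(-\Psi_b)$ and the integration by parts weighted by $d\mu$, show that all geometric corrections of order $\lambda$ (coming from $\lambda w\,\partial_y$ and from the Taylor expansion of $h$ on the support of $\tilde Q_b$) are absorbed into $\Gamma_b^{1-C\eta}$, and handle the cubic/quintic remainders in $\var$ in the weighted $L^p$ norms. The condition $h'\leq Ch$ and the quasi-exponential smallness $\lambda\leq e^{-e^{\pi/(10b)}}$ from \textbf{B5} are precisely what make these perturbative terms harmless.
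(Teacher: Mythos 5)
Your overall strategy is the same as the paper's (expand each conservation law in $\var$, kill the linear part with the profile equation and the orthogonality conditions, treat the curvature and the weight $\mu$ perturbatively using \textbf{B5} and $h'\leq Ch$), but two steps as written would not close.

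First, in (i) you bound the cross term $2\,\mathrm{Re}\int\tilde Q_b\overline{\var}\,d\mu$ by Cauchy--Schwarz against $\|\tilde u\|_{L^2}\leq(\alpha^{\ast})^{1/10}$ from \textbf{B6}. That only gives an error of size $(\alpha^{\ast})^{1/10}$, which is \emph{larger} than the right-hand side $(\alpha^{\ast})^{1/2}$ of (\ref{mass}), so the conclusion does not follow. The point is that $\tilde Q_b$ decays exponentially, so the cross term only sees $\var$ through the locally weighted quantity $\int_{|y|\leq 10/b}|\var|^2e^{-|y|}$ contained in $\mathcal E(t)$, which by \textbf{B3} is bounded by $\Gamma_b^{3/4}$ and hence is far below $(\alpha^{\ast})^{1/2}$; this is how the paper gets $\bigl|2\,\mathrm{Re}\int\tilde Q_b\overline{\var}\,d\mu\bigr|\leq C b^{-1/2}\,\mathcal E(t)\leq(\alpha^{\ast})^{1/2}$.

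Second, in (ii) you claim that all cubic and higher terms are controlled by $\delta(\alpha^{\ast})\mathcal E$ via the radial Sobolev embedding of Proposition \ref{sobolevradial} and \textbf{B3}. This works for every nonlinear term except the sextic one $\tfrac16\int|\var|^6\,d\mu$, which is the one genuinely delicate point: it is not localized by any power of $\tilde Q_b$, so it lives on the whole manifold including near the poles, where Proposition \ref{sobolevradial} does not apply (it requires the support to stay away from $r=0$ and $r=\rho$). The paper handles it by splitting with cut-offs $\chi_1,\chi_2$: near the singular curve one interpolates $\int|\chi_2\tilde u|^6\leq\|\chi_2\tilde u\|_{H^1}^2\|\chi_2\tilde u\|_{L^2}^4$ and uses the $L^2$ smallness, while away from the curve one uses the two-dimensional Gagliardo--Nirenberg inequality together with the $H^{1/2}$ smallness outside the blow-up curve from \textbf{B7}. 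Your proposal never invokes \textbf{B7}, so this term is not actually controlled. Part (iii) of your argument matches the paper's proof and is fine.
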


\begin{proof} \textbf{(\romannumeral 1) Conservation of the mass.} The proof of the estimate (\ref{mass}) consists in using the conservation of the mass and expanding in terms of $\alpha ^{\ast}$ the equality $\|u(t)\|_{L^2}^2= \|u_0\|_{L^2}^2$. We write using polar coordonates and the change of variable $y=\frac{r- r(t)}{\lambda (t)}$, 
\[ \|u_0\|_{L^2}^2= \int |\tilde{Q}(y)|^2  \mu_{\lambda(t),r(t)} (y) dy + 2 \mathrm{Re} \int \tilde{Q}(y) \overline{\var} (t,y) \mu_{\lambda(t),r(t)} (y) dy + \| \tilde{u} \|_{L^2}^2.
\]
To treat the first term, we first use that the weight $\mu$ is close to $h(1)=1$ if $ |y| \leq 10/b$. From \textbf{B1} and \textbf{B5}, we have if 
$|y| \leq 10/b$, $ |  \lambda (t)  y + r(t) -1 | \leq  \mathrm{inf} \{ (a-1)/2, 1 \}$ so that $ | h(\lambda (t) y + r(t) )- 1 | \leq C | \lambda (t) y +r(t)-1 | \leq (\alpha ^{\ast})^{1/2}$
and thus
\[
\int |\tilde {Q}(y) |^2 \mu (y) dy = \int |\tilde {Q}(y) |^2  dy + \mathcal O ( ( \alpha^{\ast}) ^{\frac{1}{2}})  .
\]
Then, recall that the mass of $\tilde{Q}$ verifies
\[
\|\tilde{Q} \|_{L^2}^2 = \| Q\|_{L^2}^2 + d_0 b^2+ \mathcal O (b^4).
\]
From $b \leq (\alpha^{\ast} )^{1/8}$, we deduce
\[
\int |\tilde{Q}(y) |^2 \mu(y) dy = \| Q\|_{L^2}^2 + b^2(t) + \mathcal O (( \alpha ^{\ast})^{\frac{1}{2}}).
\]
The second term is estimated from Cauchy-Schwarz and the control on $\mathcal E (t)$ given by \textbf{B3}. This gives using $ | \tilde{Q}(y)| \leq C \mathrm{exp} ( - |y|/2)$,
\[
\left |2 \mathrm{Re} \int \tilde{Q}(y) \overline{\var} (t,y) \mu (y) \right | \leq  C \int_{ |y| \leq 10/b} e ^{ -\frac{|y|}{2}}  | \var| dy \leq \frac{C}{ \sqrt{b}} \mathcal E (t) \leq  (\alpha^{\ast}) ^{\frac{1}{2}}.
\]
Summing all the above estimates, we obtain
\[
 \|u_0\|_{L^2}^2= \|Q\|_{L^2}^2 + d_0 b^2(t) + \|\tilde{u} \|_{L^2}^2 + \mathcal O ((\alpha ^{\ast})^{1/2}).
\]
But since the mass of the initial data is close to the mass of $Q$:
\[ 
0 \leq \|u_0\|_{L^2} -\|Q\|_{L^2} \leq \alpha ^{\ast},
\]
we can conclude 
\[
d_0 b^2(t) + \|\tilde{u}\|_{L^2}^2 \leq (\alpha ^{\ast})^{\frac{1}{2}}.
\]
In particular, this proves \textbf{C2} and \textbf{C6}.

\medskip

\textbf{ (\romannumeral 2) Conservation of the energy.} We expand in $\var$ the equality $E(u(t))=E(u_0)$. A direct computation gives
\begin{equation} \label{energy2}
 E(u_0)=\frac{C}{\lambda^2(t)} \sum_{k=0}^6 A_k 
\end{equation}
where
\begin{eqnarray*}
A_0&=& \frac{1}{2} \int |\partial_y \tilde{Q} |^2  \mu(y) dy- \frac{1}{6}  \int |\tilde{Q} |^6 \mu(y) dy , \\
A_1&=&  \mathrm{Re} \int \partial_y \tilde{Q} \partial_y \overline{\var} \mu(y) dy- \int | \tilde{Q}|^4 \mathrm{Re} \left ( \tilde{Q} \overline{\var} \right ) \mu(y) dy, \\ 
A_2&=&\frac{1}{2} \int |\partial_y \var |^2 \mu(y) dy- \frac{1}{2} \int |\tilde{Q} |^4 |  \var |^2 \mu(y) dy -2 \int | \tilde Q |^2 \mathrm{Re}^2 \left ( \tilde{Q} \overline{ \var} \right ) \mu(y) dy, \\
A_3&=&-2 \int | \tilde Q|^2 |\var|^2 \mathrm{Re} \left ( \tilde Q \overline{\var} \right) \mu(y) dy -\frac{4}{3} \int \mathrm{Re}^{3} \left (  \tilde Q \overline{ \var} \right ) \mu(y) dy, \\
A_4&=&- \frac{1}{2} \int | \tilde Q |^2 | \var|^4 \mu(y) dy -2 \int |\var|^2 \mathrm{Re}^2 \left ( \tilde Q \overline{\var} \right) \mu(y) dy, \\
A_5&=& -\int  | \var|^4 \mathrm{Re} \left ( \tilde Q \overline{\var} \right ) \mu(y) dy, \\
A_6&=& -\frac{1}{6} \int | \var|^6 \mu(y) dy .
\end{eqnarray*}
Now, we estimate each $A_k$. Remark that two of these terms are non localized in space, in the sense that the integrande is not multiplied by a power of $\tilde{Q}$, namely
\[
\frac{1}{2} \int | \partial_y \var |^2 \mu(y) dy, \qquad -\frac{1}{6} \int | \var|^6 \mu(y) dy.
\]
These terms will require a special treatment.  For the other ones,  we will use from \textbf{B5}  the approximation $ \mu(y) \approx h(r(t))$: for $|y| \leq 10 /b$,
\[
  |\mu(y) - h(r(t)) | \leq  C \Gamma _b.
\]
For $A_0$, we find essentially the one dimensional energy of $\tilde{Q}_b$. Indeed, by \textbf{B1} and \textbf{B5}, 
\begin{eqnarray*}
 A_0 &=& E ( \tilde{Q} _b)  (h(r(t)) + \mathcal O (\Gamma_b) )  \\
   &=& \mathcal O( \Gamma_b ^{1- C \eta} ).
\end{eqnarray*}
For the linear term, using again $\mu(r) \approx h(r(t))$ and after integration by parts in the first term, we get 
\[
 A_1=-\left ( h(r(t)) + \mathcal O ( \Gamma_b) \right ) \left (  \mathrm{Re} \int \left (\partial_{yy}^2 \tilde{Q}_b \overline{\var} + | \tilde{Q} _b |^4 \tilde{Q}_b \overline{\var} \right )  dy \right )
\]
Using the equation on $\tilde{Q}$, we deduce 
\[
 A_1=-\left ( h(r(t)) + \mathcal O ( \Gamma_b) \right )  \left ( \mathrm{Re} \left (  \int \tilde{Q}_b  \overline{\var} \mu(y) dy \right ) - b(t)  \mathrm{Im} \left ( \int \Lambda \tilde{Q}_b \overline{\var} \right )+ \mathrm{Re}  \left ( \int \Psi _b \overline{\var}  dy \right )  \right ) .
\]
The second term is zero by the orthogonality condition (\ref{orth3}) so that 
\[
 A_1=-\left ( h(r(t)) + \mathcal O ( \Gamma_b) \right )  \left ( ( \Sigma , \var_1) + ( \Theta , \var_2) +  \mathrm{Re}  \left ( \int \Psi \overline{\var} dy \right ) \right ) .
\]
At first sight, the third term is of size $\Gamma_b^{1/2} \mathcal E ^{1/2} (t)$ but in fact, we can improve this bound. We state the general result in the following lemma which will be used
several times in the sequel.
\begin{lem} \label{psi}
For every $k, p \in \mathbb N$, 
\[
 \left | \int \var y^ p \partial^k_y \Psi_b \right | \leq \delta( \alpha ^{\ast}) \mathcal E (t) + \Gamma_b^{1- C \eta}. 
\]
\end{lem}

\begin{proof}
We first prove by density that for all $\var \in H^1$ and for all $y \in [- 2/b, 2/b]$, 
\begin{equation} \label{densityargument}
|\var (y) | \leq \left ( \int_{-2/b} ^{2/b} | \var (y) |^2 e^{- |y|} dy \right )^{1/2} + \frac{C}{\sqrt{b}} \left ( \int_{- \frac{2}{b}} ^{\frac{2}{b}} | \partial_y \var (y) |^2 dy \right )^{1/2}.
\end{equation}
Let $\var $ smooth and compactly supported and $y_0 \in [-2/b,2/b]$ depending on $b$ and $\var$ such that 
\[
  | \var (y_0 )|\leq \left ( \int _{-2/b} ^{2/b} | \var (y)  |^2 e^{- |y|} dy \right )^{1/2}.
\]
By a contradiction argument, this point always exists since $\int e^{-|y|}  dy >1$. Then by writing for all $y \in [-2/b,2/b]$,
\[
 \var (y) = \var (y_0) + \int_{y_0} ^y \partial_ y \var (y) dy ,
\]
we obtain (\ref{densityargument}) by Cauchy-Schwarz. We apply this to our situation: we reintroduce the function $\mu$ since $|y| \leq 2/b$ to first have:
\[
 | \var (y) | \leq \mathcal E(t) ^{1/2} +\frac{C}{\sqrt b }  \left (  \int _{ - \frac{2}{b}} ^{ \frac{2}{b}} | \partial_y \var |^2 dy \right ) ^{1/2} \leq \left (1+ \frac{C}{ \sqrt{b}} \right ) \mathcal E(t)^{1/2} 
\]
and then the lemma is proved by integration and using $ |y| ^p | \partial _ k  \Psi_b (y) | \leq \Gamma_b ^{1/2}$. 
\end{proof}
 In particular:
\[
 \left | \mathrm{Re} \int \Psi_b \overline{\var} dy \right | \leq  \delta( \alpha ^{\ast}) \mathcal E (t) + \Gamma_b^{1- C \eta}, 
\]
and then 
\[
 \left |  A_1+ h(r(t)) \left  (   ( \Sigma, \var_1) + ( \Theta, \var_2) \right ) \right | \leq \Gamma_b^{1- C \eta}  + \mathcal E (t).
\]
For the quadratic term, we use the uniform closeness of $\tilde{Q}$ to $Q$ (\ref{closeness}):
\begin{eqnarray*}
 A_2&=& \frac{1}{2} \int | \partial_y \var |^2 \mu(y) dy  +( h(r(t)) + \mathcal O (\Gamma_b) ) \left ( \frac{1}{2}  \int Q^4 | \var|^2 dy +2 \int Q^2 \mathrm{Re} ^2  \left (Q \overline{\var}  \right ) dy \right )  ,
\end{eqnarray*}
so that 
\[
\left |  A_2 -\frac{1}{2} \int | \partial_y \var |^2  \mu(y) dy  - h(r(t))  \left ( \frac{5}{2} \int _{ |y| \leq \frac{10}{ b}}  Q^4  | \var_1  |^2     + \int_{ |y| \leq \frac{10}{b}} Q^4 | \var_2 |^2    \right ) \right | \leq \delta( \alpha ^{\ast} )\mathcal E(t) + \Gamma_b^{1-C \eta}.
\]
The terms $A_3, A_4$ are treated the same way. For instance, for $A_4$, we may write
\begin{eqnarray*}
 |A_4| &\leq& \| \var\|^2_{L^{\infty}( |y| \leq 10 /b )}  \int_{ |y|  \leq \frac{10}{b(t)}} |\tilde {Q}|^2 | \var |^2 dy \\
       & \leq & \| \var\|^2_{L^{\infty}( |y| \leq 10 /b )}  \int_{|y|  \leq \frac{10}{b(t)}} e^{-|y| } |\var |^2 dy.
\end{eqnarray*}
Now, we have to control the $L^{\infty}$ norm of $\var$. For this, we apply the Sobolev inequality to $\phi_1 \var$ where $\phi_1$ is a cut-off function satisfying 
\[
  \phi_1 (y)= 
\left \{ 
\begin{array}{rcl}
 0 & \textrm{if} & y \geq \frac{1}{2 \lambda}, \\
 1 & \textrm{if} & y \leq \frac{1}{4 \lambda},
\end{array}
\right.
\]
and the following bound on the derivative $\| \partial_y \phi_1 \|_{L^{\infty} } \leq \lambda$, 
\begin{eqnarray*}
 \| \var  \|_{L^{\infty}\left ( |y| \leq \frac{10}{b}\right )} &\leq&  \| \var  \phi_1 \|_{L^{\infty}} \\
                                                    & \leq & \| \partial_y ( \phi_1 \var) \|_{L^2} ^{\frac{1}{2}}  \| \phi _1\var \|_{L^2} ^{\frac{1}{2}}  \\
                                                    & \leq &\left (  \int_{y \leq \frac{1}{2\lambda}} | \partial_y \var |^2 + \lambda | \var |^2 \right ) ^{\frac{1}{4}} \left (  \int _{y  \leq \frac{1}{ 2 \lambda}} | \var |^2 \right)^{ \frac{1}{4} } 
\end{eqnarray*}
Reintroducing the measure $\mu(y)$ which is close to $h(r(t))$ if $|y| \leq 1/(2\lambda)$ and using \textbf{B3}, \textbf{B5}, \textbf{B6}, we obtain the bound 
\begin{eqnarray*}
 \| \var  \|_{L^{\infty}\left ( |y| \leq \frac{10}{b}\right )} &\leq &\left (  \int_{y \leq \frac{1}{2\lambda}} (| \partial_y \var |^2  + \lambda | \var |^2 ) \mu(y) dy \right ) ^{\frac{1}{2}} \left (  \int _{y  \leq \frac{1}{ 2 \lambda}} | \var |^2 \mu(y) dy  \right)^{ \frac{1}{2} } \\
 & \leq &  \delta ( \alpha ^{\ast }) \mathcal E^{1/4} (t)+ \Gamma_b^{1-C \eta} .
\end{eqnarray*}
Therefore, $A_3$ and $A_4$ are negligible:
\[
 |A_3| + | A_4| \leq \delta ( \alpha^{\ast}) \mathcal E(t) + \Gamma_b^{1-C eta}.
\]
For $A_5$, we first write by Cauchy-Schwarz inequality and then the $L^{\infty}$ bound found above~:
\[
 |A_5| \leq ( h(r(t)) + \mathcal O (\Gamma_b) ) \left ( \int_{|y| \leq \frac{1}{10b(t)}} | \var |^8  dy \right)^{\frac{1}{2}} \left( \int  | \tilde{Q}|^2 | \var|^2  \right)^{\frac{1}{2}} \leq \delta ( \alpha^{\ast}) \mathcal E(t) + \Gamma_b^{1-C \eta}.
\]
We now turn to the estimate of the non-localized term $A_6$. We cannot proceed as before since we have no control of the $L^{\infty}$ norm of $\var$ on the whole space. We split the space into two areas, one is localized near the singular curve $r \approx 1$ and the other one near $0$ and infinity. This splitting writes 
\[
 \int |\tilde{u}|^6 \leq \int | \chi_1 \tilde{u} |^6 + \int | \chi_2 \tilde{u} |^6,
\]
where $\chi_2$ is a radial cut-off localized near $r =1$ and $\chi_1$ outside:
\[
 \chi _1 (r) = \left \{ 
\begin{array}{rcl}
 0 & \textrm{if} & r \in [ \frac{3}{4}, \frac{5}{4} ] , \\
1 &  \textrm{if} &r\in [0, \frac{1}{2}] \cup [ \frac{3}{2}, \rho)
\end{array}
\right .
,
\quad 
 \chi _2 (r) = \left \{ 
\begin{array}{rcl}
 0 & \textrm{if} & r \in [0, \frac{1}{4}] \cup [ \frac{7}{4}, \rho)  , \\
1 &  \textrm{if} & r  \in [ \frac{1}{2}, \frac{3}{2} ] 
\end{array}
\right .
.
\]
For the first integral, we use the 2D Gagliardo-Nirenberg inequality and the $H^{1/2}$-smallness estimate near the blow-up curve
\[
 \int |\chi_1 \tilde{u} |^6 \leq \| \nabla ( \chi_1 \tilde{u} ) \|_{L^2}^2  \| \chi_1 \tilde{u}  \|_{H^{1/2}}^4 \leq \delta(\alpha^{\ast})  \left( 1+ \| \nabla \tilde{u} \|_{L^2}^2  \right).
\]
For the second integral, we use interpolation: 
\[
 \int | \chi_2 \tilde{u} |^6 \leq  \| \chi_2 \tilde{u} \|^2_{H^1} \| \chi_2 \tilde{u} \|_{L^2}^4 \leq \delta(\alpha^{\ast}) \left ( 1+  \| \nabla \tilde{u} \|_{L^2}^2  \right ).
\]
We deduce from the last estimates that
\begin{eqnarray*}
 |A_6| &\leq & \delta( \alpha^{\ast}) \lambda ^2 (t) \left ( 1+  \| \nabla \tilde{u} \|_{L^2}^2  \right ) \\
      & \leq & \lambda^2(t) + \delta( \alpha^{\ast}) \int | \partial_y \var |^2 \mu(r) dr \\
       & \leq &  \Gamma_b + \delta ( \alpha^{\ast} ) \mathcal E(t).
\end{eqnarray*}
Summing and using \textbf{B4} and $|h(r(t)) | \leq 2$, we conclude the proof of (\ref{energy}). \\

\textbf{(\romannumeral 3) Estimate induced by the smallness of the localized momentum.} We expand in $\var$ the local momentum
\[
 \lambda(t)\mathrm{Im} \int \nabla \psi \cdot \nabla u(t) \overline{u}(t)  .
\]
Recall that $\psi$ is such that 
\[
\psi(r) = \left \{ 
\begin{array}{rcl}
1 & \textrm{if} & \frac{1}{2} \leq r \leq \frac{3}{2}, \\
0 & \textrm{if} & r \leq \frac{1}{4} \textrm{ and } r \geq 2.
\end{array}
\right .
\]
so that $\pa _y \psi ( \lambda (t) y + r(t))=1$ if $y \in \mathrm{Supp}( \tilde{Q})$. Using polar coordonates and change of variables $y=\frac{r-r(t)}{ \lambda(t)}$, this gives 
\begin{eqnarray*}
\lambda(t)\mathrm{Im} \int \nabla \psi \cdot \nabla u(t) \overline{u}(t) & =& C \mathrm{Im} \int \pa_y \tilde{Q} \overline{\tilde{Q}} \mu(y) dy + C  \mathrm{Im} \int \pa _y \tilde{Q} \overline{ \var} \mu(y) dy  \\
                                                                                                                      & &+ C \mathrm{Im} \int \partial_y \var \overline{\tilde{Q}} \mu(y) dy  +C  \int \pa _y \psi (\lambda (t) y + r(t))\pa_y \var \overline{\var} \mu(y) dy .
\end{eqnarray*}
Expanding $\mu(y)$ as before:
\[
  | \mu(y) - h(r(t)) | \leq \Gamma_b ^2,
\]
and observing that $\tilde{Q}$ is even, we have that the first term is bounded by $\Gamma_b^2$. Again by splitting the function $\mu(y)$, the second term is such that
\[
\left | \mathrm{Im} \int \partial_y \tilde{Q} \overline{\var} \mu(y) dy - h(r(t)) \left ( ( \partial_y \Sigma, \var_2) + ( \partial_y \Theta, \var_1 ) \right ) \right | \leq \Gamma_b^2.
\]
Moreover, using (\ref{closeness}) we also have
\[
 | ( \pa_y \Theta, \var_1) | \leq \|\pa_y \Theta e^{ \frac{y}{2}} \|_{L^2} \| \var_1 e^{-\frac{y}{2}} \|_{L^2( -10/b, 10/b)}  \leq \delta( \alpha^{\ast}) \mathcal E ^{ \frac{1}{2}} (t) ,
\]
and then 
\[
 \left | \mathrm{Im} \int \partial_y \tilde{Q} \overline{\var} \mu(y) dy - h(r(t)) \left (  \partial_y \Sigma , \var_2  \right ) \right | \leq \delta( \alpha ^{\ast}) \mathcal E^{1/2} + \Gamma_b.
\]
The third term in the expansion of the localized momentum is estimated like the second one. For the last one, using Cauchy-Schwarz and the control of $ \mathcal E(t)$:
\begin{eqnarray*}
\left |  \int \pa _y \psi \pa \var \overline{\var} \mu(y) dy \right | & \leq & \left ( \int | \pa_y \var |^2 \mu(y) dy \right )^{\frac{1}{2}}  \left ( \int | \var| ^2 \mu(y) dy \right )^{\frac{1}{2}} \\
                                                                            & \leq & \mathcal E^{\frac{1}{2}}(t) \|\tilde{u} \|_{L^2} \\
                                                                            & \leq & \delta( \alpha^{\ast}) \mathcal E^{\frac{1}{2}}(t).
\end{eqnarray*}
Finally, we deduce from \textbf{B4'}:
\[ 
| ( \partial_y \Sigma , \var_2) | \leq \delta( \alpha ^{\ast}) \mathcal E^{\frac{1}{2}}(t) + \Gamma_{b}^2.
\]
\end{proof}

\section{Estimates on the parameters under differential form} \label{estiparadiff}
In this section, using our choice of orthogonality conditions (\ref{orth1})-(\ref{orth4}), we deduce estimates involving the geometrical parameters or more precisely their derivatives.
 
\begin{pro}
For every $s \in [s_0, s_1)$, we have the following estimates
\begin{equation} \label{parameter}
  \left | \frac{\lambda _s }{\lambda} + b \right |+ |b_s| + \left |  \frac{r_s}{\lambda}  \right |   \leq   C \mathcal E (t) + \Gamma_b ^{1-C \eta}   .
\end{equation}
\begin{equation} \label{gamma}
 \left | \tilde{\gamma}_s  - \frac{ (\var_1, L_+( \Lambda ^2 Q))}{ \| \Lambda Q \|_{L^2}^2 } \right | \leq \delta( \alpha ^{\ast}) \mathcal E^{\frac{1}{2}} (t) + \Gamma_b^{1-C \eta} .
\end{equation}
\end{pro}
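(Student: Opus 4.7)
The approach is the standard modulation-theoretic derivation, in the spirit of Merle--Rapha\"el and \cite{Rap2006}: each of the four orthogonality conditions (\ref{orth1})--(\ref{orth4}) is differentiated in $s$, and after inserting (\ref{eq1})--(\ref{eq2}) to eliminate $\partial_s\var_1$ and $\partial_s\var_2$, one obtains a $4\times 4$ linear system for the vector of modulation parameters
\[
X(s) \;=\; \Bigl(\tfrac{\lambda_s}{\lambda}+b,\ b_s,\ \tfrac{r_s}{\lambda},\ \tilde{\gamma}_s\Bigr).
\]
The key structural fact is that because the test functions $|y|^2\Sigma,\ y\Sigma,\ \Lambda\Theta,\ \Lambda^2\Theta$ (and their $\Sigma\leftrightarrow\Theta$ counterparts) become, in the $b\to 0$ limit, $|y|^2 Q,\ yQ,\ 0,\ 0$ paired with $Q,\Lambda Q,\Lambda^2 Q$, the matrix of this linear system converges to a constant invertible matrix as $b\to 0$, thanks to the closeness estimate (\ref{closeness}). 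Hence the system can be inverted with a bounded inverse, and $|X(s)|$ is controlled by the $L^2$ norm (against suitably localized test functions) of the right-hand side.

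The right-hand side of the system is a sum of four types of terms: (a)~the linear contributions coming from $M_\pm(\var)$ and the transport term $\tfrac{r_s}{\lambda}\partial_y\var$; (b)~the nonlinear contributions $R_1(\var),R_2(\var)$; (c)~the error $\tilde{\Psi}_b = \Psi_b - w\lambda\partial_y\tilde Q$; (d)~products of $X(s)$ with $\var$ (which are reabsorbed in the left-hand side once the inverse matrix has operator norm close to that at $b=0$). For (a) I would integrate by parts to move the differential operator $M_\pm$ onto the (smooth, exponentially localized) test functions, converting the linear terms into an inner product of $\var$ with an exponentially decaying weight; by Cauchy--Schwarz and the definition of $\mathcal E(t)$ this gives a bound $\lesssim \mathcal E^{1/2}(t)$. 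The curvature correction $\lambda w\partial_y$ in $M_\pm$ produces an extra factor $\lambda$, which by \textbf{B5} is far smaller than $\Gamma_b^{1-C\eta}$, so it is harmless. For (b), the local $L^\infty$ bound
\[
\|\var\|_{L^\infty(|y|\le 10/b)} \;\lesssim\; \delta(\alpha^{\ast})\,\mathcal E^{1/4}(t) + \Gamma_b^{1-C\eta},
\]
established during the energy expansion, upgrades the quintic nonlinearity to $\mathcal E(t)$. For (c) I would invoke Lemma~\ref{psi} for the contribution of $\Psi_b$, and treat $w\lambda\partial_y\tilde Q$ using the exponential decay of $\partial_y\tilde Q$ and $\lambda\ll\Gamma_b^{N}$.

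To pin down the privileged direction $L_+(\Lambda^2 Q)/\|\Lambda Q\|_{L^2}^2$ in the estimate of $\tilde{\gamma}_s$, I would use the equation derived from differentiating $(\var_1,\Lambda\Theta)-(\var_2,\Lambda\Sigma)=0$. After the substitutions described above, the coefficient of $\tilde{\gamma}_s$ reduces in the limit $b\to 0$ to $-(\Sigma,\Lambda\Sigma)-(\Theta,\Lambda\Theta) \to -\|\Lambda Q\|_{L^2}^2$ up to $O(b)$, while the leading source term is $(L_+\var_1,\Lambda^2 Q)=(\var_1,L_+\Lambda^2 Q)$, using the self-adjointness of $L_+$ and the identities $L_+\Lambda Q = -2Q$, $L_-Q=0$; all other contributions fall into categories (a)--(c) above and are therefore bounded by $\delta(\alpha^{\ast})\mathcal E^{1/2}(t)+\Gamma_b^{1-C\eta}$. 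This gives exactly (\ref{gamma}). For the first three parameters the analogous computation, using the three remaining orthogonality conditions, absorbs the analogous leading inner products in $X$ into the left-hand side and yields (\ref{parameter}) with the slightly weaker right-hand side $C\mathcal E(t)+\Gamma_b^{1-C\eta}$, the improvement from $\mathcal E^{1/2}$ to $\mathcal E$ in the $\tilde{\gamma}_s$ estimate being lost because no orthogonality condition kills the $(\var_1,L_+\Lambda^2 Q)$-type resonant term.

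The main obstacle compared to the Euclidean proof is the presence of the extra curvature term $\lambda w\partial_y$ inside $M_\pm$ and of the measure $\mu$ inside every integration: one must systematically replace each integral $\int\ \cdot\ dy$ by $\int\ \cdot\ \mu(y)\,dy$, exploit $|\mu(y)-h(r(t))|\lesssim\Gamma_b$ for $|y|\le 10/b$ as in the conservation-law expansion, and verify that the resulting discrepancies are controlled by $\Gamma_b^{1-C\eta}$ thanks to \textbf{B1}, \textbf{B5}, the growth assumption (\ref{growth}) and the exponential localization of the test functions.
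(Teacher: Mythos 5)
Your overall strategy is the same as the paper's: the paper also pairs (\ref{eq1})--(\ref{eq2}) with the test functions $|y|^2\Sigma,|y|^2\Theta$, $y\Sigma,y\Theta$, $-\Lambda\Theta,\Lambda\Sigma$ and $\Lambda^2\Theta,-\Lambda^2\Sigma$ (equivalently, differentiates the four orthogonality conditions), estimates the source terms exactly as in your categories (a)--(d) (Lemma \ref{psi} for $\Psi_b$, the $L^\infty$ bound for the nonlinear terms, $\lambda\ll\Gamma_b^N$ for the curvature term $\lambda w\partial_y$, reabsorption of the $\mathcal E^{1/2}|X|$ cross terms), and then closes the resulting system by summing the four inequalities with a small weight on the $\tilde\gamma_s$ row rather than literally inverting a $4\times4$ matrix --- a cosmetic difference.

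There is, however, one concrete miscalculation in your description of the $\tilde\gamma_s$ extraction. You claim that differentiating $(\var_1,\Lambda\Theta)-(\var_2,\Lambda\Sigma)=0$ (condition (\ref{orth3})) produces a $\tilde\gamma_s$ coefficient $-(\Sigma,\Lambda\Sigma)-(\Theta,\Lambda\Theta)\to-\|\Lambda Q\|_{L^2}^2$. This is false: since $\Lambda=\tfrac12+y\partial_y$ is skew-adjoint on $L^2(dy)$, one has $(Q,\Lambda Q)=0$, so the $\tilde\gamma_s$ coefficient in the (\ref{orth3})-row degenerates as $b\to0$ and that equation cannot be used to solve for $\tilde\gamma_s$. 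In the paper that row is instead the one that isolates $b_s$, through the nondegenerate coefficient $\mathrm{Im}(\partial_b\tilde Q,\Lambda\tilde Q)\approx\tfrac14\int y^2|\tilde Q_b|^2>0$ coming from the $\partial_s\Sigma,\partial_s\Theta$ terms. The row that carries $\tilde\gamma_s$ with coefficient $\|\Lambda Q\|_{L^2}^2$ is the one obtained from (\ref{orth4}), i.e.\ the pairing with $\Lambda^2\Theta$ and $-\Lambda^2\Sigma$, where the relevant limit is $(Q,\Lambda^2Q)=-\|\Lambda Q\|_{L^2}^2\neq0$; it is also in that row that $(M_+(\var),\Lambda^2\Sigma)\approx(\var_1,L_+\Lambda^2Q)$ appears as the resonant term retained in (\ref{gamma}). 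Once the rows are assigned correctly (which your ``invertible limit matrix'' framing would force you to do anyway), the rest of your argument goes through and reproduces (\ref{parameter}) and (\ref{gamma}) as in the paper.
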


\begin{proof}
\textbf{Estimate for $\frac{\lambda_s}{\lambda}+b$}. We take the inner product of (\ref{eq1}) with $|y|^2 \Sigma$ and (\ref{eq2}) with $|y|^2 \Theta$ and sum the two equalities. Next using the orthogonality conditions (\ref{orth1})-(\ref{orth4}) and several integrations by parts, we can group terms together to obtain
\begin{eqnarray*}
& & \left ( \frac{ \lambda_s }{\lambda}+b \right ) \|y \tilde Q \|_{L^2}^2 = -\frac{1}{2} \partial_s \left ( \| y \tilde Q \|_{L^2}^2 \right ) - \mathrm{Im} ( \lambda w  \partial_y \tilde{Q}, y^2 \tilde{Q}  ) +  \mathrm{Im} ( \var, y^2 \tilde {\Psi})  \\
 & & -\left ( \frac{\lambda_s}{\lambda}+b\right ) \mathrm{Re} (\var, \Lambda ( y^2 \tilde{Q} ) ) - \frac{r_s}{\lambda} \mathrm{Re} ( \var, \partial_y (y^2 \tilde{Q} ) ) +b_s \mathrm{Re} ( \var, y^2  \partial_b \tilde{Q}) \\
 & &    + \tilde{\gamma}_s \mathrm{Im} ( \var, y^2 \tilde{Q} ) - \mathrm{Im} ( \lambda w  \partial_y \var, y^2 \tilde{Q} )   + (R_1(\var), y^2 \Theta) - (R_2( \var),y^2 \Sigma) . 
\end{eqnarray*}
Now, we estimate each term. First, for the left hand side, we have the lower bound
\[
  \left | \left ( \frac{ \lambda_s }{\lambda}+b \right ) \|y \tilde Q \|_{L^2}^2 \right | \geq C \left | \frac{\lambda_s}{\lambda}+ b \right |.
\]
Next for the term $\partial_s \|y \tilde Q\|_{L^2}^2 $, we write
\begin{equation} \label{term1}
\partial_s \| y\tilde{Q} \|_{L^2}^2 = b_s \mathrm{Re} ( \partial_b \tilde{Q}, y^2 \tilde{Q})  . 
\end{equation}
Using that $\partial_b \tilde{Q}$ is close to $\frac{i}{4} y^2 Q$ and $y^2 \tilde{Q}$ is close to $y^2 Q$ and $\mathrm{Re}( \frac{i}{4} y^2 Q, y^2 Q)=0$, we can bound (\ref{term1}) by 
$\delta( \alpha^{\ast}) |b_s|$. For the term $\mathrm{Im} (\var, y^2 \tilde{\Psi})$, we use Lemma \ref{psi} and the smallness of $\lambda$ \textbf{B5} to deduce
\[
| \mathrm{Im}  (\var, y^2 \tilde{\Psi}) |  \leq  \Gamma_b^{1-C \eta} +  \mathcal E(t).                                                       
\]
For terms appearing with the function $w$, namely
\[
 \mathrm{Im} ( \lambda w \partial_y \tilde{Q}, y^2 \tilde{Q} ), \qquad \mathrm{Im} ( \lambda w   \partial_y \var, y^2 \tilde{Q} ),
\]
we use the bound \textbf{B5} on $\lambda$ and the lower bound for $|y| \leq 10/b$:
\begin{equation} \label{minorationw}
| w(y)| =\left | \frac{h'(\lambda (t) y + r(t) )}{h(\lambda (t) y + r(t) )}  \right | \leq  \frac{ 2| h'(1)| }{ h(1)} 
\end{equation}
to estimate the first of these terms by
\[
  | \mathrm{Im} ( \lambda w  \partial_y \tilde{Q}, y^2 \tilde{Q}) | \leq \Gamma_b.
\]
For the second, using again (\ref{minorationw}) and also Cauchy-Schwarz, we have 
\[
 | \mathrm{Im} ( \lambda w  \partial_y \var, y^2 \tilde{Q} ) | \leq \Gamma_b \mathcal E^{\frac{1}{2}}(t) \leq \Gamma_b.
\]
Now, for terms 
\[
 \mathrm{Re} (\var, \Lambda ( y^2 \tilde Q ) ), \ \mathrm{Re} ( \var, \partial_y (y^2 \tilde Q ) ), \  \mathrm{Re} ( \var, y^2  \partial_b \tilde Q ),  \ 
\mathrm{Im} ( \var, y^2 \tilde Q ) ,
\]
we estimate by $\mathcal E^{\frac{1}{2}}(t)$ using the exponential decay of $\tilde{Q}$ and (\ref{closeness}). It remains to consider the nonlinear in $\var$ terms. These terms are treated exactly the same way than $A_k,  k=1, ..., 6$ in the step of conservation of energy 
in the section \ref{sectionconservation}.  This gives
\[
 | (R_1(\var),y^2 \Theta)| +   |(R_2(\var),y^2 \Sigma)| \leq \mathcal E(t).
\]
Finally, putting together all these considerations, we obtain the first estimate
\begin{equation} \label{lambda}
 \left | \frac{\lambda_s}{\lambda}+b \right | \leq \mathcal E ^{\frac{1}{2}}(t) \left (  \left | \frac{\lambda_s}{\lambda}+b \right | + \left | \frac{r_s}{\lambda}  \right | 
 + |\tilde{\gamma}_s  |  \right )+ \delta (\alpha^{\ast}) |b_s| +\mathcal E(t) + \Gamma_b^{1- C \eta} .
\end{equation}
\textbf{Estimate for $\frac{r_s}{\lambda}$.} Taking the inner product of (\ref{eq1}) with $y \Sigma$ and (\ref{eq2}) with $y \Theta$ and sum the two equalities, we obtain
\begin{eqnarray*}
& &  - \frac{\| \tilde Q  \|_{L^2}^2}{2}   \frac{r_s}{\lambda} =  \mathrm{Im} ( \lambda w  \partial_y \tilde{Q} , y \tilde{Q}) - \mathrm{Im} ( \var, y \tilde{\Psi}) + \frac{r_s}{\lambda}  \mathrm{Re} (\var, \partial_y (y \tilde Q  ))    \\
 & & - \tilde{\gamma}_s \mathrm{Im} ( \var, y \tilde{Q} ) -b_s \mathrm{Re} ( \var, y \partial_b \tilde{Q}) + \left ( \frac{\lambda_s}{\lambda}+b \right ) \mathrm{Re} (\var, \Lambda (y \tilde{Q} ))  \\
  & &  +\mathrm{Im} ( \lambda w \partial_y \var, y \tilde{Q} )  -(R_1(\var), y \Theta) + ( R_2 ( \var), y \Sigma). \\
\end{eqnarray*}
As before, we estimate each term of this relation. The same type of consideration than for $\lambda_s / \lambda + b$ yields the estimate
\begin{equation} \label{r}
 \left | \frac{r_s}{\lambda} \right | \leq \mathcal E^{\frac{1}{2}} (t) \left ( \left | \frac{\lambda_s}{\lambda}+b \right | + \left | \frac{r_s}{\lambda}\right | 
 + | \tilde{\gamma}_s  | +|b_s|    \right )  + \mathcal E(t) + \Gamma_b^{1-C \eta}  .
\end{equation}
\textbf{Estimate for $b_s$.} We take the inner product of (\ref{eq1}) with $- \Lambda  \Theta $ and (\ref{eq2}) with $ \Lambda \Sigma $ and sum the two equalities to obtain
\begin{eqnarray}
& & b_s \mathrm{Im} ( \partial_b \tilde{Q} , \Lambda \tilde{Q} )= 2  \mathrm{Re} ( \var,  \tilde{Q}- \tilde{\Psi} ) - \mathrm{Re} (  \tilde{\Psi} ,\Lambda \tilde{Q})+ \mathrm{Re} ( \lambda w  \partial_y \var,  \Lambda \tilde{Q})  \label{virial} \\
& & + b_s \mathrm{Im}(\var, \Lambda \partial_b \tilde{Q}) -\left ( \frac{\lambda_s}{\lambda}+b \right ) \mathrm{Im} ( \var, \Lambda^2 \tilde{Q})- \frac{r_s}{\lambda}   \mathrm{Im} ( \var, \partial_y \Lambda \tilde{Q} )  \nonumber \\
& & +\tilde{\gamma}_s \mathrm{Re}( \var, \Lambda \tilde{Q})   + (R_1( \var),\Lambda  \Sigma) + ( R_2(\var), \Lambda  \Theta) .  \nonumber 
\end{eqnarray}
All terms are treated the same way than before except the term $2  \mathrm{Re} ( \var, \tilde{Q} -\tilde{\Psi} ) $ which needs a special treatment. Using the conservation of energy written in (\ref{energy2})
\[
\frac{\lambda^2 E_0}{C}  = \sum_{k=0}^6 A_k,
\]
and the equality for $A_1$:
\[
 2  \mathrm{Re} ( \var, \tilde{\Psi} - \tilde{Q} ) = -\frac{2}{h(r(t))} A_1 + \mathcal O ( \Gamma_b),
\]
we may write
\[
 2  \mathrm{Re} ( \var, \tilde{\Psi} - \tilde{Q} ) = -\frac{2}{h(r(t))}  \left ( \frac{\lambda^2 E_0}{C}  - \sum_{k \neq 1} A_k \right )  + \mathcal O ( \Gamma_b).
\]
Therefore, using that $h(r(t)) \approx 1 >0$, the different estimates of the $A_k$ established in the proof of (\ref{energy}), and the equality 
\[
E( \tilde{Q}) =  O ( \Gamma_b^{1- C \eta}),
\]
which follows from (\ref{energyestimate}), we successively have 
\begin{eqnarray*}
  \left | 2  \mathrm{Re} ( \var, \tilde{\Psi} - \tilde{Q} ) \right | &\leq & \lambda^2 | E_0 | + E ( \tilde Q) + \mathcal E(t) + \Gamma_b \\
                                                                 & \leq & \lambda^2 |E_0| +\mathcal E(t) + \Gamma_b^{1-C \eta}  .
\end{eqnarray*}
Moreover, the approximation $\partial_b \tilde{Q_b} \sim -i y^2 /4 \tilde{Q_b}$ gives after integration by parts:
\begin{equation} \label{signvirial}
\mathrm{Im} ( \partial_b \tilde{Q} , \Lambda \tilde{Q} ) = \frac{1}{4} \int y^2 |\tilde{Q_b} |^2.
\end{equation}
We thus have 
\begin{equation} \label{b}
|b_s| \leq \mathcal E^{\frac{1}{2}}(t)  \left (  \left | \frac{\lambda_s}{\lambda}+b \right |+ \left | \frac{r_s}{\lambda} \right | + |b_s| +| \tilde{\gamma}_s | \right ) 
 +  \lambda^2 |E_0|+ \mathcal E(t) + \Gamma_b^{1- C \eta}. 
\end{equation}
\textbf{Estimate for $\tilde{\gamma}_s$.}
We take the inner product of (\ref{eq1}) with $\Lambda^2 \Theta$ and (\ref{eq2}) with $- \Lambda ^2 \Sigma$:
\begin{eqnarray*}
 & &  \tilde{\gamma} _s  \| \Lambda Q \|_{L^2}^2 - (\var_1, L_{+}(  \Lambda^2 Q)) =b_s \left ( \mathrm{Im} ( \partial_b \tilde Q, \Lambda ^2 \tilde Q) - \mathrm{Im} (\var, \partial_b \Lambda^2 \tilde Q )\right ) \\
 & & + \left ( \frac{ \lambda_s}{\lambda} +b \right ) \mathrm{Im} ( \Lambda ^2 \tilde Q, \Lambda \tilde Q)   + \mathrm{Re} ( \tilde{\Psi}_b, \Lambda ^2 \tilde Q) - \mathrm{Re} ( \lambda w \partial _y \tilde Q, \Lambda ^2 \tilde Q) \\
 & &  -  \frac{r_s}{ \lambda}  \mathrm{Im} ( \partial_y \tilde Q, \Lambda ^2 \tilde Q)+ \left ( \frac{ \lambda_s}{\lambda} +b \right ) \mathrm{Im} ( \var, \Lambda^3 \tilde Q ) + \tilde{\gamma}_s \mathrm{Re} ( \var, \Lambda ^2 \tilde Q) \\
 & &  +  \frac{r_s}{\lambda} \mathrm{Im} ( \var, \partial_y \Lambda^2 \tilde Q) - b \mathrm{Im} ( \var, \Lambda ^3 \tilde Q) +\left ( ( M_+( \var),\Lambda^2  \Sigma)- ( \var_1, L_+ \Lambda^2  Q) \right ) \\
 & & +( M_-(\var), \Lambda ^2  \Theta) - ( R_1( \var), \Lambda ^2  \Sigma)  -( R_2(\var), \Lambda ^2  \Theta) . 
\end{eqnarray*}
We use the previous estimates together with
\[
     \left | (M_{+} (\var), \Lambda ^2 \Sigma ) - ( \var_1, L_{+} \Lambda ^2 Q) \right |      +   \left | ( M_{-}( \var), \Lambda ^2 \Theta ) \right |          \leq \delta( \alpha^{\ast}) \mathcal E^{1/2}  (t)                         
\]
 to deduce
\begin{equation*}
  \left |   \tilde{\gamma} _s  \| \Lambda Q \|_{L^2}^2 - (\var_1, L_{+}(  \Lambda^2 Q)) \right | \leq C |b_s| + \delta ( \alpha^{\ast}) \left |\frac{\lambda_s}{\lambda} + b \right  |  + \delta( \alpha ^{\ast}) \left | \frac{r_s}{\lambda}  \right | 
\end{equation*}
\begin{flushright} 
   $\displaystyle{ + \left ( \left |\frac{\lambda_s}{\lambda} + b \right  | + |\tilde{\gamma}_s| + \left | \frac{r_s}{\lambda} \right | \right ) \mathcal E^{\frac{1}{2}}(t)+ \mathcal E^{\frac{1}{2}}(t) \delta( \alpha^{\ast}) + \Gamma_b^{1- C \eta}}$
\end{flushright}
and that we can also rewrite to make appear the term $\tilde{\gamma}_s \| \Lambda Q \|_{L^2}^2 - ( \var_1, L_{+} ( \Lambda^2 Q))$ as 
\begin{eqnarray} \label{gamma2}
 \left |   \tilde{\gamma} _s  \| \Lambda Q \|_{L^2}^2 - (\var_1, L_{+}(  \Lambda^2 Q)) \right | & \leq & C |b_s| +\delta( \alpha^{\ast})  \Big ( \left |\frac{\lambda_s}{\lambda} + b \right  | + \left | \frac{r_s}{\lambda} \right |   + \big (  \tilde{\gamma}_s   \| \Lambda Q \|_{L^2}^2 \\
 & & -( \var_1, L_+( \Lambda^2 Q) \big )  \Big ) + \delta( \alpha^{\ast}) \mathcal E^{\frac{1}{2}}(t) + \Gamma_b^{1- C \eta}. \nonumber
\end{eqnarray}

Now, summing (\ref{lambda}), (\ref{r}), (\ref{b}) and $\nu$(\ref{gamma2})  for $\nu$ small enough (to make the term $C |b_s|$ in (\ref{gamma2}) go in the left side) and using that $\mathcal E(t)$ goes to $0$ as $\alpha^{\ast}$ tends to $0$, we first have
\begin{equation} \label{esti10}
\left | \frac{\lambda_s}{\lambda}+b \right | + \left | \frac{r_s}{\lambda} \right | + |b_s|+ \left | \tilde{\gamma}_s \| \Lambda Q\|_{L^2}^2 -( \var_1, L_+( \Lambda^2 Q)) \right | 
\end{equation}
\begin{flushright}
 $ \displaystyle{ \leq \delta ( \alpha^{\ast}) \mathcal E ^{\frac{1}{2}}(t) + \Gamma_b^{1- C \eta}  + \lambda^2 | E_0|,}$
\end{flushright}
and this gives (\ref{gamma}). Next, by summing the estimates (\ref{lambda}), (\ref{r}), (\ref{b}) that do not involve the phase parameter $\tilde{\gamma}_s$ and injecting (\ref{gamma}) in this new estimate, we get the control of the other parameters (\ref{parameter}).
\end{proof}

\section{\texorpdfstring{$H^1$ virial estimate}{H1 virial estimate}} \label{virialloc}

In the spirit of \cite{MerRap2003}, \cite{Rap2006}, to derive dispersive properties of the rest $\var$, we would like to use the virial identity for the one dimensional Schr\"odinger equation. However, the classical virial identity is only defined in $\Sigma:=\{ u \in H^1, \int d(N,x)^2 |u|^2 < \infty \}$ (where $N$ is one of the pole) but a formal computation will show that we may extend this identity to $H^1$. The curvature of the manifold, which is reflected in the additional term $h'/h \partial_r u$ in the Laplace operator, is treated as a perturbation due to the smallness of the parameter $\lambda$. Let us give the computation in the case $u \in \Sigma$. In this case, we see $u$ as a function of $r$ and an almost solution of the Euclidean equation  
\[
i \partial_t u + \partial_{r}^2  u \sim - |u|^4 u, 
\] 
for which we apply the classical virial identity 
\[
 \frac{d}{dt} \mathrm{Im} \int \partial_r u \overline{u} r dr = 4 E_0.
\]
Next, we expand in $\var$ this relation and switch in $(s,y)$ variables to obtain 
\[
A_0+A_1+A_2 = 4 \lambda ^2 E_0,
\]
where 
\begin{eqnarray*}
A_0&=& \frac{d}{ds} r(s) \mathrm{Im} \int \partial_r  u \overline{u} + \mathrm{Im} \int y \partial_y  \tilde{Q_b} \overline{\tilde{Q_b}} , \\
A_1&=& 2 \frac{d}{ds} \mathrm{Im} (\Lambda \tilde{Q_b}, \var) , \\
A_2&= &\frac{d}{ds} \mathrm{Im} \int \partial_y \var \overline{ \var} .
\end{eqnarray*}
But $r \sim 1 $ and the momentum of $u(r)$ is almost constant since $u(r)$ is an almost solution of the one dimesional equation so that after integration by parts in the second term, we get $A_0= D b_s + \mathcal O (\Gamma_b ^{1-C \eta})$ for a constant $D>0$. $A_1$ is zero by orthogonality conditions. For the third term, we use the equation satisfied by $\var$ that we may see by the smallness of the parameters and removing the nonlinear part as 
\[
i \partial_s \var  + L  \var = \Psi.
\]
This gives 
\[
A_2 = H( \var, \var) - 2 \mathrm{ Re} \int \var \Lambda \overline{ \Psi} +  \mathcal O( \Gamma_b^{1- C \eta}).
\]
Summing $A_0,A_1,A_2$ and using the smallness of $\Psi$ and the coercive property of $H$ modulo some negative directions that we control by orthogonality conditions and conservation laws, we obtain the virial estimate:
\[
C b_s \geq \mathcal E(t) - \Gamma_b ^{1- C \eta}.
\]
Since this estimate is well defined for $u \in H^1$, we expect it to hold in the general case. This is stated in the following proposition.
\begin{pro}[$H^1$ \textbf{virial estimate}]
 There exist $C>0$ and $\delta>0$ such that for all $s \geq 0$,
\begin{equation} \label{virial2}
 b_s \geq \delta \mathcal E(t) - \Gamma_b^{1-C \eta}.
\end{equation}
\end{pro}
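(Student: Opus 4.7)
The plan is to follow the scheme already sketched by the author just before the statement: apply the one dimensional virial identity, expand around $\tilde Q_b$ and recognize the quadratic form $H$ of Proposition \ref{spectral}, then use the orthogonality conditions and the controls of Section \ref{sectionconservation} to rule out the negative directions. First I would localize: since $H^1$ radial functions are not in $\Sigma$, I replace $r$ by $\psi_R(r) r$ for a smooth radial cut-off $\psi_R$ with $\psi_R \equiv 1$ for $r\in[1/2,3/2]$, compute $\frac{d}{dt}\mathrm{Im}\int \partial_r u\,\overline u\,\psi_R r$, and observe that the contribution coming from $\Delta_M-\partial_r^2=\frac{h'}{h}\partial_r$ carries a factor $\lambda$ once one rescales by $y=(r-r(t))/\lambda(t)$, hence is controlled by $\Gamma_b$ thanks to \textbf{B5} and (\ref{growth}). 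Thus, after rescaling in $(s,y)$, one obtains
\[
A_0+A_1+A_2 = 4\lambda^2 E_0+O(\Gamma_b^{1-C\eta}),
\]
with $A_0,A_1,A_2$ exactly as in the sketch.

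For $A_0$, I use that $r(t)\to r(T)\sim 1$ (from \textbf{B1}) and that the almost-conservation of the one dimensional momentum of $\tilde Q_b$, combined with (\ref{signvirial}), rewrites $A_0$ as $D\,b_s+O(\Gamma_b^{1-C\eta})$ for some $D>0$ (this is where the essential sign appears). For $A_1=2\frac{d}{ds}\mathrm{Im}(\Lambda\tilde Q_b,\var)$, I use the orthogonality conditions (\ref{orth3})--(\ref{orth4}) after noting that $\Lambda\tilde Q_b$ is close to $\Lambda Q$ by (\ref{closeness}); the remaining error is controlled by the parameter estimate (\ref{parameter}) multiplied by the exponentially small weighted norm of $\var$. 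For $A_2$, I plug in the equation (\ref{eq1})--(\ref{eq2}) and isolate the leading linearized operator $L=(L_+,L_-)$; the resulting quadratic form is exactly $H(\var,\var)$ modulo perturbative terms in $\lambda w$ (curvature) and in $b$ (corrections between $M$ and $L$), plus a source term $-2\mathrm{Re}\int\Lambda\overline\Psi\,\var$ handled by Lemma \ref{psi}, and a nonlinear remainder handled as the $A_3,\dots,A_6$ terms in the energy expansion via the $L^\infty$-on-$|y|\le 10/b$ bound obtained through the Gagliardo--Nirenberg style argument used for (\ref{energy}).

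Combining these three pieces and using \textbf{B4} to absorb $\lambda^2|E_0|\lesssim \Gamma_b^2$, I arrive at the lower bound
\[
D\,b_s \;\geq\; H(\var,\var) - \delta(\alpha^{\ast})\mathcal E(t) - \Gamma_b^{1-C\eta}.
\]
It then remains to invoke Proposition \ref{spectral}, which gives
\[
H(\var,\var)\;\geq\;\delta\,\mathcal E(t) - \tfrac1\delta\bigl[(\var_1,Q)^2+(\var_1,yQ)^2+(\var_1,y^2Q)^2+(\var_2,\Lambda Q)^2+(\var_2,\Lambda^2 Q)^2+(\var_2,\partial_y Q)^2\bigr].
\]
Four of these six inner products are killed (up to $O(\delta(\alpha^{\ast})\mathcal E(t)^{1/2})$ from the closeness (\ref{closeness}) of $\tilde Q_b$ to $Q$) by the orthogonality conditions (\ref{orth1})--(\ref{orth4}); the remaining two directions $(\var_1,Q)^2$ and $(\var_2,\partial_y Q)^2$ are precisely those handled respectively by the linearized energy control (\ref{energy}) together with the bound $\lambda^2|E_0|\le\Gamma_b^2$, and by the localized momentum control (\ref{momentum}). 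Each of these gives a bound of the form $\delta(\alpha^{\ast})\mathcal E(t)+\Gamma_b^{1-C\eta}$. Injecting these into the coercivity estimate finishes the proof.

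The main obstacle is the treatment of the two negative directions $(\var_1,Q)$ and $(\var_2,\partial_y Q)$, because they cannot be killed by modulation (we already used four orthogonality conditions) and the bounds available on them are only of size $\mathcal E(t)^{1/2}$. The point is that they enter the quadratic form squared, so $\delta(\alpha^{\ast})\mathcal E(t)$ can be absorbed on the right-hand side, which crucially requires $\alpha^{\ast}$ to be chosen small at the very end of the argument. A secondary technical issue is making rigorous the virial computation in $H^1$, which I resolve by performing everything against a cut-off $\psi_R$ supported near $r=1$ and using the radial Sobolev embedding of Proposition \ref{sobolevradial} to control the boundary contributions of $\psi_R$, which vanish away from the blow-up curve thanks to \textbf{B7}.
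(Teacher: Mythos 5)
Your route is not the one the paper takes: the paper never makes the virial identity rigorous. Its proof returns to the modulation relation (\ref{virial}) for $b_s$ (obtained by pairing the $\var$-equations with $-\Lambda\Theta$ and $\Lambda\Sigma$) and then substitutes the energy conservation $E(u)=E_0$ into the linear term $2\mathrm{Re}(\var,\tilde Q-\tilde\Psi)$, which is what produces the quadratic form $(\mathcal L_1\var_1,\var_1)+(\mathcal L_2\var_2,\var_2)$ together with the nonlinear contributions $(R_i(\var),\Lambda\cdot)$. This algebraic route avoids entirely the need to justify $\frac{d}{dt}\mathrm{Im}\int \partial_r u\,\overline u\,r$ in $H^1$ and the cut-off error terms your $\psi_R$ introduces. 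That said, your endgame --- orthogonality conditions killing $(\var_1,yQ)$, $(\var_1,y^2Q)$, $(\var_2,\Lambda Q)$, $(\var_2,\Lambda^2 Q)$, energy conservation handling $(\var_1,Q)$ and the localized momentum handling $(\var_2,\partial_yQ)$, with the squares absorbed into $\delta(\alpha^\ast)\mathcal E(t)$ --- is exactly the paper's, and your treatment of the exterior boundary terms (small after the $\lambda^2$ rescaling of time, using \textbf{B7}) is plausible.

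The genuine gap is in the sentence ``the resulting quadratic form is exactly $H(\var,\var)$ modulo perturbative terms in $\lambda w$.'' The non-localized gradient term produced by the computation carries the Riemannian weight, $\int|\partial_y\var|^2\mu(y)\,dy$ with $\mu(y)=h(\lambda y+r(t))$, whereas Proposition \ref{spectral} is a flat-measure statement on $H^1(\R)$. For $|y|\sim 1/\lambda$ (which is inside the support of $\var$) the weight $\mu(y)$ is \emph{not} close to $h(r(t))$ --- in the non-compact case with $h(r)=\sinh r$ it is not even bounded --- so this discrepancy cannot be written off as a perturbation of size $\delta(\alpha^\ast)\mathcal E(t)$. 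The paper resolves this by inserting a cut-off $\phi_3$ at the intermediate scale $\Gamma_b^{-5}$: on $|y|\le 2\Gamma_b^{-5}$ one has $|\mu(y)-h(r(t))|\le\delta(\alpha^\ast)$ by \textbf{B5}, the commutator with $\phi_3$ costs only $\|\partial_y\phi_3\|_{L^\infty}^2\lesssim\Gamma_b^{10}$, the discarded exterior piece $\int|\partial_y\var|^2(1-\phi_3^2)\mu(y)\,dy$ enters with a favorable sign, and the exponentially localized potential terms do not see $|y|\ge\Gamma_b^{-5}$; the spectral property is then applied to $\phi_3\var$, not to $\var$. Without this localization step your application of Proposition \ref{spectral} does not go through, and the step is not cosmetic: it is where the lower bound by $\mathcal E(t)$ (which is defined with the weight $\mu$) is actually recovered.
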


\begin{proof}
 We go back to the relation (\ref{virial}): 
\begin{eqnarray}
& & b_s \mathrm{Im} ( \partial_b \tilde{Q} , \Lambda \tilde{Q} )= 2  \mathrm{Re} ( \var,  \tilde{Q}- \tilde{\Psi} ) - \mathrm{Re} (  \tilde{\Psi} ,\Lambda \tilde{Q})+ \mathrm{Re} (\lambda w \partial_y \var,  \Lambda \tilde{Q})   \\
& & + b_s \mathrm{Im}(\var, \Lambda \partial_b \tilde{Q}) -\left ( \frac{\lambda_s}{\lambda}+b \right ) \mathrm{Im} ( \var, \Lambda^2 \tilde{Q})- \frac{r_s}{\lambda}   \mathrm{Im} ( \var, \partial_y \Lambda \tilde{Q} )  \nonumber \\
& & +\tilde{\gamma}_s \mathrm{Re}( \var, \Lambda \tilde{Q}) -  + (R_1( \var),\Lambda  \Sigma) + ( R_2(\var), \Lambda  \Theta) .  \nonumber 
\end{eqnarray}
To prove (\ref{virial2}), we need in particular to extract the quadratic in $\var$ term in the right hand side of the last equality that we write as 
\[
 b_s \mathrm{Im} ( \partial_b \tilde Q , \Lambda \tilde Q) = F+G,
\]
where 
\[
 F=2  \mathrm{Re} ( \var,  \tilde{Q} -\tilde{\Psi})+ \left ( (R_1( \var),\Lambda \Sigma) + ( R_2(\var), \Lambda \Theta) \right ) =F_1+F_2.
\]
First, the $G$ term is easily estimated using the control of the parameters proved in the previous section and estimates
\[ 
 |(\var, \partial^{\alpha}  \tilde Q)|+|(\var, \partial_b \Lambda \tilde Q)|+|( \tilde{\Psi} ,\Lambda \tilde{Q})| \leq \mathcal E(t)+ \Gamma_b^{1- C \eta} .
\]
This gives that $G$ is negligible:
\[
 |G| \leq \delta( \alpha^{\ast}) \mathcal E(t) + \Gamma_b^{1-C \eta} .
\]
Now we focus on the $F$ term where we want to extract the quadratic in $\var$ term. Let us first study the $F_1$ term. We use the conservation of energy that we can write according to (\ref{energy2}):
\[
 \frac{ \lambda^2 E_0}{C} = \sum_{k=0}^6 A_k.
\]
Remark that we have already seen
\[
 A_0= \mathcal O( \Gamma_b^{1- C \eta}),
\]
\[
 A_1=-h(r(t)) \mathrm{Re}( \var, \tilde Q - \tilde \Psi) + \mathcal O( \Gamma_b^{1- C \eta}),
\]
and
\begin{eqnarray*}
 A_2+A_3+A_4+A_5+A_6 &=& \frac{1}{2} \int | \partial_y \var |^2 \mu(y) dy -\frac{5 }{2} \int Q^4 \var_1^2 dy - \frac{1}{2} \int Q^4 \var_2^2 \\
                     & & + \mathcal O( \Gamma_b^{1- C \eta})+ \delta(\alpha^{\ast}) \mathcal E(t).
\end{eqnarray*}
Therefore, we may write $F_1$ as
\[
 F_1=\frac{-2}{ h(r(t))} \left ( \frac{\lambda^2 E_0}{C}  - \frac{1}{2} \int | \partial_y \var |^2 \mu(y) dy + \frac{5}{2} \int Q^4 \var_1^2+ \frac{1}{2} \int Q^4 \var_2^2 
\right ) + \delta( \alpha^{\ast})\mathcal E (t)+ \Gamma_b ^{1- C \eta} .
\]
Next, by expanding $R_1(\var)$ and $R_2(\var)$ and replacing $\tilde{Q_b}$ by $Q$ by (\ref{closeness}), we obtain 
\[
 F_2=(R_1(\var), \Lambda \Sigma) +(R_2(\var), \Lambda \Theta)  =(\var_1^2, 5 Q^4 + 10 y Q^3 \partial_y Q)+ (\var_2^2, Q^4 +2 yQ^3 \partial_y Q)  + \delta( \alpha^{\ast}) \mathcal E(t).
\]
We sum and obtain 
\begin{multline}
 F = \int | \partial_y \var |^2  \frac{\mu(y)}{ h(r(t))}  dy +\left ( ( \mathcal L_1 \var_1, \var_1)+ ( \mathcal L_2 \var_2, \var_2) - \int | \partial_y \var |^2 \right ) - 2 \frac{\lambda ^2 E_0}{ C h(r(t))}   \\
+ \mathcal O (\Gamma_b ^{1-C \eta}) + \delta( \alpha^{\ast}) \mathcal E(t)  .
\end{multline}
Remark that since the term $|\partial_y \var|^2$ is not localized in space, we cannot make the approximation
\[
  \mu(y) \sim h(r(t))
\]
with a good error. Moreover, we will need to control from below the quantity $( \mathcal L_1 \var_1, \var_1)+ ( \mathcal L_2 \var_2, \var_2) $ by $\mathcal E(t)$ plus some inner products. For this, we will need to localize $\var$ to reintroduce the measure $\mu(y)$. 
These two facts  suggest to introduce $\phi_3$ a cut-off such that
\[
 \phi_3 (t,y)=
\left \{
\begin{array}{rcl}
 1 & \textrm{if}& |y| \leq \Gamma_b ^{-5} \\
0 & \textrm{if} & |y| \geq 2 \Gamma_b ^{-5}
\end{array}
\right . ,
\]
with $0 \leq \phi_3 \leq 1$ and the bound $| \partial _y \phi_3 | \leq \Gamma_b^5$. Then we write
\[
 \int | \partial_y \var  |^2 \mu(y) dy = \int | \partial_y \var |^2 \phi_3^2  \mu(y) dy+\int | \partial_y \var  |^2 (1- \phi_3^2) \mu(y) dy.
\]
The second term will not be a problem since it is positive. For the first term, we write that the quantity 
\[
 A:=  \left | \int \phi_3^2   | \partial_y \var |^2  \mu(y) dy - \int | \partial_y (\phi_3 \var) |^2 h(r(t)) dy  \right | 
\]
is bounded by
\begin{eqnarray*}
 A &\leq&  \left | \int \phi_3^2 | \partial_y \var |^2 h(r(t)) dy - \int | \partial_y ( \phi_3 \var ) |^2 h(r(t)) dy \right |  \\ 
                   & & + \left | \int \phi_3^2   | \partial_y \var |^2  \mu(y) dy - \int \phi_3^2   | \partial_y \var |^2   h(r(t)) dy \right |   \\
                                                                     & \leq & \left | \int \phi_3^2   | \partial_y \var |^2  \left(h(\lambda(t) y +r(t))-h(r(t))  \right) dy  \right |  + C \| \partial_y \phi_3 \|_{L^{\infty}}^2 \| \var \|_{H^1 ( - \Gamma_b^{-5}, \Gamma_b^{-5} )}^2  \\
                                                                   & \leq & \delta( \alpha^{\ast}) \mathcal E(t)+\Gamma_b
\end{eqnarray*}
since when $|y|  \leq \Gamma_b^{-5},\  | h(\lambda(t)y+r(t))-h(r(t))| \leq \delta (  \alpha^{\ast} )$ according to \textbf{B5} and $ 1/2 \leq \mu(y)$. At this point, for the $F$ term, we obtain
\[ 
F \geq \int | \partial_y (\phi_3\var)  |^2 +\left ( ( \mathcal L_1 \var_1, \var_1)+ ( \mathcal L_2 \var_2, \var_2) - \int | \partial_y \var |^2 \right ) -\frac {\lambda^2 E_0}{h(r(t))}- \delta( \alpha^{\ast}) \mathcal E(t) - \Gamma_b^{1- C \eta}.
\]
We may also localize the expression in parenthesis with a good error. Indeed, let us give the argument for the term $y Q^3 \partial_y Q \var_1^2$:
\begin{eqnarray*}
 \left | \int y Q^3 (\partial_y Q) \var_1^2 - \int y Q^3 (\partial_y Q) (\phi_3 \var_1 )^2 \right | &\leq & \int _{ |y| \geq \Gamma_b^{-5}} |y| Q^3 |\partial_y Q|  \var_1^2  \\ 
                                                       & \leq &  \Gamma_b \mathcal E(t) \leq \delta(\alpha ^{\ast}) \mathcal E(t).
\end{eqnarray*}
Thus with \textbf{A4}, we have:
\[
 F \geq ( \mathcal L_1 ( \phi_3 \var_1), \phi_3 \var_1 ) + ( \mathcal L_2 ( \phi_3 \var_2), \phi_3 \var_2) - \delta( \alpha^{\ast})\mathcal E(t)  - \Gamma_b^{1-C \eta}.
\]
Here, we use the spectral property (\ref{spectral}) to obtain a lower bound on $H( \phi_3 \var, \phi_3 \var) $:

\begin{multline} \label{innerprod}
 H( \phi_3 \var, \phi_3 \var)   \geq  \delta \left ( \int | \partial_y (\phi_3 \var )|^2 dy + \int | \phi_3 \var |^2 e^{- |y|}dy \right )-\frac{1}{\delta}  \Big (  (\phi_3 \var_1, Q)^2 +
(\phi_3 \var_1, y^2 Q)^2  \\ 
+(\phi_3 \var_1, yQ)^2  
  +( \phi_3 \var_2, \Lambda Q)^2+(\phi_3 \var_2, \Lambda^2 Q)^2 + (\phi_3 \var_2, \partial_y Q) \Big ). 
\end{multline}
First, by the property of the support of $\phi_3$, we have
\[
 \int | \partial_y (\phi_3 \var )|^2 dy + \int | \phi_3 \var |^2 e^{- |y|}dy \geq  \mathcal E(t) - \Gamma_b.
\]
Secondly, using orthogonality conditions and the estimates (\ref{mass}), (\ref{energy}), (\ref{momentum}) induced by conservation laws, we can bound each of the six inner products in (\ref{innerprod})  by $\delta( \alpha^{\ast}) \mathcal E(t)$. Indeed, let us give the argument for the 
inner products $(\phi_3 \var_1, Q)$ and $(\phi_3 \var_1, y^2 Q)$. For the first, we use the conservation of energy (\ref{energy}) to write
\begin{eqnarray*}
 |(\phi_3 \var_1, Q)| & \leq&  | (\var_1, Q)| +| ((1- \phi_3) \var_1, Q)|   \\
                                   & \leq & \leq  | (\var_1, Q-  \Sigma)| + | ( \var_1,  \Sigma )| + \Gamma_b \\ 
                       & \leq & \delta( \alpha^{\ast}) \mathcal E^{\frac{1}{2}}(t)+ \delta(\alpha^{\ast}) \mathcal E^{\frac{1}{2}} (t) + \Gamma_b^{1- C \eta}  \\
                      & \leq &  \delta(\alpha^{\ast}) \mathcal E^{\frac{1}{2}} (t) + \Gamma_b^{1- C \eta}  .
\end{eqnarray*}
For the second, we use the orthogonality condition (\ref{orth1}). This yields
\begin{eqnarray*}
| (\phi_3 \var_1, y^2 Q)| &\leq&  | ( 1- \phi_3 ) \var_1 , y^2 Q) | + | ( \var_1, y^2 Q)| \\
                          &\leq  &  \delta( \alpha ^{\ast}) \mathcal E^{\frac{1}{2}}(t)+| ( \var_1, y^2 (Q-  \Sigma))|+ |(\var_1, y^2 \Sigma)|    \\
                          & \leq & \delta( \alpha ^{\ast}) \mathcal E^{\frac{1}{2}}(t) +\delta( \alpha ^{\ast}) \mathcal E^{\frac{1}{2}}(t) + |(\var_2, y^2 \Theta)| \\
                         & \leq & \delta( \alpha ^{\ast}) \mathcal E^{\frac{1}{2}}(t).
\end{eqnarray*}
All these considerations give 
\[
 H( \phi_3 \var, \phi_3 \var)  \geq  \delta \mathcal E(t), 
\]
and then
\[
 F \geq \delta \mathcal E(t) - \Gamma_b^{1- C \eta} .
\]
We obtain (\ref{virial}) by summing $F$ and $G$ for $\alpha^{\ast}$ small enough and using the sign of the quantity $\mathrm{Im} ( \partial_b \tilde{Q} , \Lambda \tilde{Q} )$ proved in (\ref{signvirial}).

\end{proof}

\section{Estimates on geometrical parameters} \label{sectionestimates}
In this section, we integrate estimates proved in the two previous sections. This will give us informations on $b, \lambda$ and $r$. Using the virial estimate:
\[ \left | \frac{\lambda_s}{\lambda} +b \right | + |b_s| \leq \Gamma_{b}^{1/2} , \]
we get 
\[
\frac{d}{ds} ( \lambda^2 e^{5 \pi / b} ) = 2 \lambda^2 e ^{ 5 \pi /b} \left ( \frac{\lambda_s}{\lambda} +b -b - \frac{5 \pi b_s}{ 2 b^2} \right ) \leq - b \lambda^2 e^{ \frac{5 \pi}{b} } \leq 0.
\]
Therefore, $\lambda^2 e^{ 5 \pi / b }$ is a decreasing function of $s$ and so of $t$, which gives using \textbf{A5}:
\[
 \frac{ \lambda^2(t) | E_0| }{\Gamma_b^4(t)} \leq |E_0| \lambda ^2(t) e^{5 \pi / b(t)}  \leq |E_0| \lambda^2(0) e^{ 5 \pi / b(0)} <1
\]
and this proves \textbf{C4}.

\medskip

\textbf{Estimate for $b$.} 
From the virial estimate (\ref{virial2}), we obtain a differential inequality 
\[
 b_s \geq - \Gamma_b.
\]
that we solve by dividing by $b^2$ and remarking that it implies 
\[
\frac{d}{ds} \left ( e^{ 3 \pi / 4b} \right ) \leq 1.
\]
Integrating this in time and using our choice for $s_0$ (\ref{s0}), we get 
\[
 e^{ 3 \pi / 4b}  \leq s,
\]
and this gives the lower bound on $b$:
\begin{equation} \label{boundb}
b \geq \frac{3 \pi}{4 \log s}.
\end{equation}

\medskip

\textbf{Estimate for $\lambda$.}  Using the estimate (\ref{parameter}) on the parameter $\lambda_s / (\lambda + b)$ and the control on $\Gamma_b$, we get
\[
 \left | \frac{\lambda_s}{\lambda} +b  \right | \leq \Gamma_b^{1/2}  \leq \frac{b}{3} .
\]
We deduce from the the lower bound on $b$
\[
 - \frac{ \lambda_s}{\lambda}  \geq \frac{2b}{3} \geq \frac{\pi}{ 2 \log s}.
\]
We integrate this inequality to deduce 
\[
 - \log \lambda(s) \geq - \log \lambda (s_0) + \int _{s_0}^s \frac{ \pi}{ 2 \log  \theta} d \theta,
\]
and thus 
\begin{eqnarray*}
 - \log \lambda(s) & \geq  & - \log \lambda (s_0) + \frac{ \pi}{2} \int _{s_0}^s  \frac{\log \theta - 1}{ \log ^2 \theta} d \theta  \\
                   & \geq & - \log \lambda (s_0) + \frac{\pi}{2}  \left [ \frac{ \theta}{ \log \theta} \right ]_{s_0} ^s \\
                   & \geq & - \log  \lambda ( s_0) - \frac{ \pi }{2} \frac{s_0}{ \log s_0} + \frac{\pi}{2} \frac{s}{ \log s}.
\end{eqnarray*}
Using \textbf{A5} and our choice of $s_0$ (which is large if $\alpha^{\ast}$ is small enough), this gives
\begin{equation} \label{boundonlambda1}
 - \log \lambda(s)  \geq -\frac{1}{2} \log \lambda (s_0) + \frac{ \pi}{2} \frac{s}{ \log  s}
\end{equation}
and this prove for all $s \in [s_0,s_1)$,
\begin{equation} \label{estimatelambda}
 \lambda (s) \leq \sqrt{\lambda_0} e^{- \frac{ \pi}{2} \frac{s}{ \log s}}.
\end{equation}
The lower bound on $b$ (\ref{boundb}) implies in particular
\[
 b(s) \geq \frac{ \pi}{5} \frac{1}{ \log s - \log \log s}
\]
which is equivalent to
\[
 \frac{s}{ \log s} \geq  \exp { \frac{\pi}{ 5b(s)} }.
\]
This last inequality together with (\ref{estimatelambda}) yield:
\[
 \lambda \leq \sqrt{ \lambda_0} \mathrm{exp} \left ( - \frac{ \pi}{2} \mathrm{exp} \left ( \frac{ \pi}{ 5 b} \right )  \right )
\]
and therefore from the smallness of $\lambda_0$ given by \textbf{A5}, we obtain the upper bound on $\lambda$ \textbf{C5}.
\textbf{Estimate on $r$.} The estimate (\ref{parameter}) on $r_s/ \lambda$ shows that if $\alpha ^{\ast}$ is small,
\[
  \left | \frac{r_{\lambda}}{ \lambda} \right | \leq 1 .
\]
And thus, using the upper bound on $\lambda$ (\ref{parameter}) and the estimate on $\lambda_0$ \textbf{A5}, we get
\[
  | r(s)- r_0| \leq \int_{s_0}^s |r_s| ds \leq \int_{s_0}^s \lambda( \theta) d \theta \leq \sqrt{ \lambda_0} \int_2 ^{+ \infty} \exp \left ( -  \frac{\pi}{2} \frac{ \theta}{ \log \theta}  \right ) d \theta \leq \alpha^{\ast}.
\]
And by \textbf{A1},
\[ 
|r(t)-1 | \leq \alpha^{\ast},
\]
and we obtain \textbf{C1}.

\section{Smallness of the localized momentum}

The goal of this section is to show \textbf{C4'}: 
\[
 \lambda (t) \left | \mathrm{Im} \left ( \int \overline{u}(t)  \nabla   \psi \cdot \nabla u(t) dx \right ) \right | \leq \Gamma_b^4.
\]
We first multiply the equation (\ref{nls}) by $\frac{1}{2}  \Delta \psi \overline{u} + \nabla \psi \cdot \nabla \overline{u}$ and take the real part and then after several integration by parts
\[
\frac{1}{2} \partial_t \mathrm{Im} \int \overline{u} \ \nabla \psi \cdot \nabla u  =  \int | \partial_ r u|^2 \partial_r^2 \psi h dr d \theta - \frac{1}{4} \int |u|^2 \Delta^2 \psi + \frac{1}{3} \int |u|^6 \Delta \psi .
\]
We use the boundedness of $\psi$ and its derivative to control the two first terms in the right hand side of the above equality and the Sobolev type inequality from Proposition \ref{sobolevradial} to control the $L^6$ norm of $u$. This gives 
\begin{eqnarray*}
  \left | \frac{1}{2} \partial_t \mathrm{Im} \int \overline{u} \ \nabla \psi \cdot \nabla u \right | &\leq &\| \nabla u \|_{L^2}^2 + \|u\|_{L^2}^2 + \| u\|_{H^{\frac{1}{3}}} ^6 \\
                                                                                                       & \leq & 2 \| u\|_{H^1}^2 + \| u\|_{L^2} ^3 \| u\| _{H^1}^{2} \\                             
                                                                                                   & \leq &  C \|u\|_ {H^1} ^2 .
\end{eqnarray*}
Thus using the decomposition of $u$ in term of $\tilde{Q}$ and $\var$ to write $\| \nabla u \| _ {L^2} \leq 1/ \lambda $, we deduce
\begin{equation} \label{moment}
 \left | \frac{1}{2} \partial_t \mathrm{Im} \int \overline{u} \ \nabla \psi \cdot \nabla u \right | \leq  \frac{1}{ \lambda ^2 (t)}.
\end{equation}
We then integrate (\ref{moment}) in time between $0$ and $t$ and use 
\[
 \int_0^t \frac{d \tau}{ \lambda^2 (\tau)} = \int_{s_0} ^s d \theta \leq s ,
\]
to obtain for all $t \in [0, t_1)$,
\begin{equation} \label{ineg2}
 \lambda (t) \left | \mathrm{Im} \int  \nabla \psi \cdot \nabla u \overline{u} \right | \leq \lambda (t) \left | \mathrm{Im} \int  \nabla \psi \cdot \nabla u_0 \overline{u_0} \right |+ \lambda (t) s(t).
\end{equation}
But, $\lambda (t) e^{6 \pi / b(t)} \leq \lambda_0  e^{6 \pi / b_0} $. Indeed, 
\begin{eqnarray*}
 \partial_s ( \lambda   e^{ 6 \pi / b} ) = \lambda e^{ 6 \pi / b} \left ( \left ( \frac{\lambda_s }{\lambda} + b \right ) - b - \frac{6 \pi b_s}{b^2}  \right )
\end{eqnarray*}
and the term in parenthese is negative (equivalent to $-b$) since by (\ref{parameter}) and \textbf{B3}:
\[
 \left | \frac{\lambda_s}{\lambda}+ b \right | +  \left | \frac{b_s}{b^2} \right | \leq \Gamma_b^{\frac{1}{2}} .
\]
We deduce using the estimate of the localized momentum at time $t=0$ \textbf{A4}:
\begin{eqnarray}
  \frac{\lambda (t) \left |\mathrm{Im}  \int  \nabla \psi \cdot \nabla u_0 \overline{u_0} \right |}{\Gamma_b ^5}  &\leq & \lambda (t) \left | \mathrm{ Im} \int \nabla \psi \cdot \nabla u_0  \overline{u_0} \right | e^\frac{ 6 \pi }{b(t)} \nonumber \\
                                                                                                  & \leq & \lambda _0 \left | \mathrm{ Im} \int \nabla \psi \cdot \nabla u_0  \overline{u_0} \right | e ^\frac{6 \pi}{b_0} <1. \label{ineg} 
\end{eqnarray}
Moreover from (\ref{boundb}) and (\ref{boundonlambda1}), we may show
\[
 s(t) \lambda (t)  \leq \Gamma_b ^5 .
\]
Injecting the last inequality together with (\ref{ineg}) into (\ref{ineg2}), we get \textbf{C4'}.

\section{Refined Virial estimate}
The virial estimate (\ref{virial2}) allowed us to get the bound (\ref{boundb}) on $b(s)$
\[
 b(s) \geq \frac{ 3 \pi}{4 \log s}.
\]
With this estimate, we will deduce the log log upper bound on the blow up rate
\[
\| \nabla u(t) \|_{L^2} \leq C \left ( \frac{ \log | \log (T-t) | } {T-t} \right )^{\frac{1}{2}}.
\]
If we want to prove the log log lower bound on the blow up rate, we need to have the converse inequality in (\ref{boundb}) and this requires to improve the virial estimate (\ref{virial2}). This refinement consists in finding an expansion at first order of the rest $\var$ and this term will be $\zeta_b$ or more precisely a troncated version of $\zeta_b$. Therefore, we choose a suitable parameter $A>0$ such that in the area $|y| \leq A$, the  radiation is close to $\var$ and then  get a virial type estimate for the new variable
\[
\tilde{ \var} = \var- \tilde{\zeta}_b,
\]
where $\tilde{\zeta}_b$ is the troncated radiation:
\[
 \tilde{\zeta} _b = \chi_A \zeta _b  = \tilde{\zeta} _{\mathrm {Re}} +i \tilde{\zeta}_{\mathrm{Im}}.
\]
and where  $\chi_A$ is a radial cut-off localized between $0$ and $2A$: $\chi_A(y)=\chi( y /A)$ with
\[
\chi(y)=
\left \{
 \begin{array}{rcl}
  1 & \textrm{ if } & |y| \leq 1 \\
  0 & \textrm{ if } & |y| \geq 2 \\
 \end{array}
\right .
.
\]
We choose $A$ as
\begin{equation} \label{choicea}
 A=e^{ \frac{a}{ \pi b(t)}},
\end{equation}
where $a>0$ is a small constant to be chosen later.  Using that $\chi_A (y) =1$ if $ |y| \leq 2/ b$, we deduce the equation satisfied by the truncated radiation $\tilde{\zeta}_b$:
\[
 \partial_y^2 \tilde {\zeta} _b- \tilde{\zeta}_b+ ib \Lambda \tilde{\zeta}_b = \Psi_b + F,
\]
where 
\[
 F=( \partial_y^2 \chi_A) \zeta_b + 2 (\partial_y \chi_A)( \partial_y \zeta_b) + i by (\partial _y \chi_A)  \zeta_b.
\]
Note that $\mathrm{Supp} \ F \subset \{ y, \  A \leq |y| \leq 2A \}$. Moreover, it is not difficult to see from Lemma \ref{lemzeta} that $\tilde{\zeta}_b$ satisfies 
\begin{equation} \label{zetatilde3}
 \int | \partial_y \tilde{\zeta}_b |^2 \leq \Gamma_b^{1- C \eta}, \qquad \int | \tilde{\zeta} _b|^2  \leq \Gamma_b^{1- C \eta}.
\end{equation}

 As for the first virial estimate, we may predict what kind of estimate we may hope for $\tilde{\var}$. Indeed, the equation for $\tilde{\var}$ is essentially:
\[
i \partial_s \tilde{\var} + L \tilde{\var} = F,
\]
so that by rewriting the virial identity if it is defined, we obtain roughly:
\[
b_s \geq H (\tilde{\var}, \tilde{\var}) + \mathrm{Re}(  \tilde{\var}, \Lambda \tilde{\zeta}) + \mathrm{smaller \ terms}.
\]
In particular, we will need to estimate the inner product $(  \tilde{\var}, \Lambda \tilde{\zeta})$. Let us now state the precise virial estimate for the variable $\tilde{\var}$.
\begin{lem}[\textbf{Refined virial estimate}] \label{refined}
 There exist constants $ \delta_1>0, c>0$ such that the following holds. There exist $\eta ^{\ast}, a^{\ast} >0$ such that for all $\eta \in (0, \eta^{\ast}), a \in (0, a^{\ast})$, there exists $b^{\ast}(\eta^{\ast}, a^{\ast})>0$ such that for all $ |b| \leq b^{\ast}(\eta^{\ast}, a^{\ast})$, and for all $s \in [s_0, s_1)$, 
\begin{equation} \label{refined2}
 \frac{d}{ds} f_1(s) \geq \delta_1  \tilde {\mathcal  E} (s) + c \Gamma_{b(s)} - \frac{1}{\delta_1} \int_{A \leq |y| \leq 2A}  | \var | ^2, 
\end{equation}
where $f_1(s) \sim C b$:
\begin{equation}
 f_1(s) = \frac{b}{4} \| y \tilde{Q}_b \|_{L^2}^2 + \frac{1}{2} \mathrm{Im} \left ( \int   y \partial_y \tilde{\zeta}  \overline{\tilde{\zeta}}  \right ) + ( \var_2, \Lambda \tilde{\zeta}_{\mathrm{re}}) - ( \var_1, \Lambda \tilde{\zeta}_{\mathrm{Im}}),
\end{equation}
and
\[
 \tilde {\mathcal E}(t)=\int | \partial_y \tilde {\var}(t,y) |^2 \mu(y) dy + \int_{|y| \leq \frac{10}{b(t)}} | \tilde{\var}(t,y) |^2e^{ - |y|} dy.
\]
\end{lem}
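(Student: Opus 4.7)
The plan is to compute $\frac{d}{ds} f_1(s)$ term by term, isolate the quadratic form in $\tilde{\var}$ that arises, and then invoke the spectral property of Proposition \ref{spectral} to obtain the coercivity $\delta_1 \tilde{\mathcal E}(s)$. The constant $c\Gamma_b$ must come from a flux-at-infinity computation on the truncated radiation $\tilde{\zeta}_b$, and the last term in \eqref{refined2} will be the inevitable localization error produced by the commutator $[{\rm linear\ part},\chi_A]$ supported in $\{A \leq |y| \leq 2A\}$.

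First, I would subtract the equation satisfied by $\tilde{\zeta}_b$ from the system \eqref{eq1}--\eqref{eq2} to derive an evolution equation for $\tilde{\var} = \var - \tilde{\zeta}_b$ of the schematic form $i\partial_s \tilde{\var} + L\tilde{\var} = -F + \text{mod. terms} + R(\var)$, where $F$ is supported in the annulus $\{A \leq |y| \leq 2A\}$ and the modulation terms carry the small factors $|\lambda_s/\lambda + b|, |b_s|, |r_s/\lambda|, |\tilde\gamma_s|$ already controlled by \eqref{parameter}. Then I differentiate $f_1$: the piece $\frac{b}{4}\|y\tilde{Q}_b\|_{L^2}^2$ yields $\frac{b_s}{4}\|yQ\|_{L^2}^2$ up to $o(|b_s|)$, which by \eqref{virial} (combined with the already-established conservation-of-energy expansion) produces the usual quadratic form $H(\tilde{\var},\tilde{\var})$ plus an $O(\lambda^2 |E_0| + \Gamma_b^{1-C\eta})$ error. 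The radiation piece $\frac{1}{2}\mathrm{Im}\int y\,\partial_y \tilde{\zeta}\,\overline{\tilde\zeta}$ is a virial of the linear equation satisfied by $\tilde{\zeta}_b$; a direct computation (multiply the equation on $\tilde\zeta_b$ by $y\partial_y\overline{\tilde\zeta}$, integrate, and take the imaginary part) gives $\partial_s$ of this quantity equal to $\frac{b}{2}\int |\partial_y\tilde\zeta|^2 - \frac{b}{2}|y\tilde\zeta|^2\big|_\infty + (\text{flux})$, and the boundary/flux term reproduces exactly $c\,\Gamma_b$ via \eqref{gammabinfinity}--\eqref{gammabestimate}. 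Finally, the cross-term $(\var_2,\Lambda\tilde\zeta_{\rm re}) - (\var_1,\Lambda\tilde\zeta_{\rm Im})$ is constructed so that, upon differentiating and using the equations for $\var$ and $\tilde\zeta_b$, it precisely kills the mixed linear-in-$\var$ pieces of the form $(\var, \Lambda \Psi_b)$ that would otherwise appear with size $\Gamma_b^{1-C\eta}$ (this is the same algebraic mechanism as in Lemma 6 of \cite{MerRap2003}).

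Collecting all terms, I expect to obtain
\[
\frac{d}{ds} f_1(s) \geq H(\tilde{\var},\tilde{\var}) + c\Gamma_b - C\int_{A\leq|y|\leq 2A}|\var|^2 - \delta(\alpha^\ast)\tilde{\mathcal E}(s) - \lambda^2|E_0| - \Gamma_b^{1+c'\eta},
\]
where the annulus integral absorbs $|F|^2$-type contributions coming from $\chi_A'$ and $\chi_A''$, and where the choice $A = e^{a/\pi b}$ with $a$ small ensures that every radiation-tail term past $2A$ is of size $\Gamma_b^{1+c'}\ll \Gamma_b$. The last step is then to lower-bound $H(\tilde\var,\tilde\var)$. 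By Proposition \ref{spectral},
\[
H(\tilde\var,\tilde\var) \geq \delta \tilde{\mathcal E}(s) - \frac{1}{\delta}\sum_{j=1}^{6}(\tilde\var,\phi_j)^2,
\]
and the six inner products are estimated exactly as in the proof of \eqref{virial2}: the four orthogonality conditions \eqref{orth1}--\eqref{orth4} plus \eqref{zetatilde3} eliminate the scalar products against $y^2 \Sigma, y\Sigma, \Lambda\Theta, \Lambda^2\Theta$ up to $\Gamma_b^{1-C\eta}$; the remaining two against $Q$ and $\partial_y Q$ are controlled by the mass \eqref{mass} and the localized momentum \eqref{momentum}. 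Using $\lambda^2|E_0|\leq \Gamma_b^4$ from \textbf{C4} and $\Gamma_b^{1+c'\eta}\ll \Gamma_b$ for $\eta$ small, the conclusion \eqref{refined2} follows.

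The main obstacle I anticipate is the bookkeeping of the radiation virial: one must check that the flux at infinity is genuinely positive and of size exactly $c\Gamma_b$ (not $o(\Gamma_b)$), which rests on the precise asymptotic \eqref{gammabestimate} of $\Gamma_b$ and on the fact that $|\partial_y \tilde\zeta_b|^2$ has a well-defined limit behavior dictated by the Airy solution constructed in Lemma \ref{lemzeta}. The secondary difficulty is ensuring that the cross-term $(\var_2,\Lambda\tilde\zeta_{\rm re}) - (\var_1,\Lambda\tilde\zeta_{\rm Im})$ has a derivative whose "bad" pieces truly cancel the $(\var, \Lambda\Psi_b)$ terms to leading order in $b$; this is where the identity $\tilde\zeta_b$ solves $\partial_y^2 \tilde\zeta_b - \tilde\zeta_b + ib\Lambda \tilde\zeta_b = \Psi_b + F$ is used crucially, and where the geometric perturbation $\lambda w\partial_y$ must be shown to be of order $\Gamma_b^{1-C\eta}$ using \textbf{B5} and \eqref{minorationw}.
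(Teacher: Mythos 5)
Your plan follows the paper's proof essentially verbatim: the paper likewise runs the virial on $\tilde{\var}=\var-\tilde{\zeta}_b$ (by pairing \eqref{eq1}, \eqref{eq2} with $-\Lambda(\Theta+\tilde{\zeta}_{\mathrm{Im}})$ and $\Lambda(\Sigma+\tilde{\zeta}_{\mathrm{Re}})$), extracts $H(\tilde{\var},\tilde{\var})$ plus $\mathrm{Re}(\tilde{\var},\Lambda F)$, obtains the annulus term from Cauchy--Schwarz applied to $\mathrm{Re}(\var,\Lambda F)$ and the gain $c\,\Gamma_b$ from $-\mathrm{Re}(\tilde{\zeta},\Lambda F)$ using the elliptic equation satisfied by $\tilde{\zeta}_b$ on $\{A\le |y|\le 2A\}$ together with \eqref{gammabinfinity}--\eqref{gammabestimate}, and closes with the spectral property exactly as in the first virial estimate. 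One small correction: the direction $(\var_1,Q)$ is controlled by the energy expansion \eqref{energy} (as in the proof of \eqref{virial2}), not by the mass estimate \eqref{mass}, which only yields an $O(\alpha^{\ast})$ bound, far too large compared with $\Gamma_b$; also, $\tilde{\zeta}_b$ is a stationary profile, so the $c\,\Gamma_b$ flux does not literally arise as an $s$-derivative of $\frac{1}{2}\mathrm{Im}\int y\,\partial_y\tilde{\zeta}\,\overline{\tilde{\zeta}}$ but from the elliptic virial identity for $\tilde{\zeta}_b$ hidden in $-\mathrm{Re}(\tilde{\zeta},\Lambda F)$.
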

\begin{proof}

In a first step, as for the first virial estimate, the proof consists in multiplying the $\var$-equation by some suitable quantities; namely (\ref{eq1}) by $- \Lambda( \Theta + \tilde{\zeta}_{\mathrm{Im}})$ and (\ref{eq2}) by $\Lambda ( \Sigma + \tilde{\zeta}_{\mathrm{Re}})$ and summing. We do not rewrite this equality to avoid surcharging the text. This relation is essentially the same as the one found to prove the log log lower bound for the $L^2$-critical equation (see this relation in \cite{MerRap2006}) plus additionnal terms induced by the functions $w$ and $\mu$. Among these terms, there is only one which is non localized and which enter in the quadratic form $H$. The others are localized and are controled using (\ref{zetatilde3}) and for $|y| \leq 10/b,\  \mu(y) \sim 1$ and $w(y) \leq C$. After estimating these terms and controlling the negative directions of $H$ as in the proof of the first virial estimate, we obtain 
\begin{equation}
 \frac{d}{ds} f_1(s) \geq \delta_1 \left ( \int | \partial_y \tilde {\var}  |^2  \mu(y) dy + \int _{ |y| \leq \frac{10}{b}} | \tilde{\var}|^2 e^{ - |y|} dy \right ) - \Gamma_b ^2 +\mathrm{Re}(\tilde{\var}, \Lambda F).
\end{equation}
We have to estimate the inner product $\mathrm{Re}( \tilde{\var}, \Lambda F)$. To deduce the first virial estimate, we have used the bound
\[
  \mathrm{Re} ( \var, \Lambda \Psi )  \geq - \Gamma_b^{1- C \eta} .
\]
Here we may obtain a better estimate by showing for some $c>0$,
\begin{equation} \label{flux}
  \mathrm{Re} ( \tilde{\var}, \Lambda  F )  \geq c \Gamma_b - \frac{1}{c}  \int _{A \leq |y| \leq 2A } |\var|^2  .
\end{equation}
The proof of (\ref{flux}) does not make appear the measure $\mu$ and therefore is the same than the Euclidean $L^2$-critical case \cite{MerRap2006}. Let us recall quickly the argument, for details see \cite{MerRap2006}. It consists in splitting
\begin{equation} \label{split}
 \mathrm{Re} ( \tilde{\var}, \Lambda  F ) = \mathrm{Re} ( \var, \Lambda  F ) - \mathrm{Re} ( \tilde{\zeta}, \Lambda  F ).
\end{equation}
For the first term, we use Cauchy-Schwarz and the bound
\[
\int_A^{2A} | \Lambda F | ^2 \leq \Gamma_b
\]
consequence of properties of $\zeta_b$ (Lemma \ref{lemzeta}) to deduce for all $\delta_2 >0$,
\[
 | \mathrm{Re} ( \var, \Lambda  F ) | \leq \Gamma_b ^{\frac{1}{2} } \left ( \int_{ A \leq |y| \leq 2A}  | \var |^2 \right )^{\frac{1}{2}} \leq \delta_2 \Gamma_b + \frac{1}{\delta_2} \int _{ A \leq |y| \leq 2A}  | \var |^2 .
\]
The treatment of the second term consists in writing $F$ for $ A \leq  |y| \leq 2A$ as 
\[
 F=\partial_y^2 \tilde{\zeta} - \tilde{\zeta}+ ib \Lambda \tilde{\zeta} ,
\]
and using polar coordonates and integration by parts. Then from (\ref{zetatilde3}): 
\[
- \mathrm{Re} ( \tilde{\zeta}, \Lambda  F ) \geq c \Gamma_b.
\]
Finally, by summing and taking $\delta_2$ small enough, we obtain the desired estimate:
\[
\frac{d}{ds} f_1(s) \geq \delta_1 \left ( \int | \partial_y \tilde {\var}  |^2  \mu(y) dy + \int _{ |y| \leq \frac{10}{b}} | \tilde{\var}|^2 e^{ - |y|} \right ) + c \Gamma_b  - \frac{1}{c} \int_{ A \leq |y| \leq 2A}  | \var |^2.
\]
\end{proof}

Remark that $f_1(s) \sim Cb$ so that the new virial estimate is almost an estimate of the type:
\[
 b_s \geq \delta_1 \tilde{\mathcal E} (t) + c \Gamma_b -\frac{1}{\delta} \int_{ A \leq |y| \leq 2A} | \var |^2,
\]
and this inequality is an improvement of the first virial estimate if we are able to have a good estimate on the localized $L^2$-norm of $\var$. We denote by $\phi_4$ a smooth, increasing and radial cut-off satisfying $0 \leq \phi_4 \leq 1$ with $\phi_4'(y) \geq 1/2$ if $y \in [1,2]$ and:
\[
\phi_4(y)=
 \left \{ 
\begin{array}{ccl}
 0 & \textrm{if} & 0 \leq y \leq 1/2 \\
1 & \textrm{if} & y \geq 3 \\
\end{array}
\right . .
\]
\begin{lem}[\textbf{Localized $L^2$-norm of $\var$}] For all $s \in [s_0, s_1)$, 
\begin{equation} \label{dispersion}
 \frac{d}{ds} \left ( \frac{1}{r(s)} \int \phi_4 \left (  \frac{y}{A} \right )| \var|^2 \mu(y) dy \right ) \geq \frac{b}{8} \int _{A \leq |y| \leq 2A} | \var|^2 - \Gamma_{b} ^{\frac{a}{2}} \int | \partial_y \var |^2 \mu(y)dy  - \Gamma_{b}^2.
\end{equation}
\end{lem}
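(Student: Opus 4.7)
My plan is to differentiate the quantity $I(s):=\frac{1}{r(s)}\int\phi_4(y/A)|\var|^2\mu(y)\,dy$ in $s$ and identify the positive contribution coming from the scaling generator $b\Lambda\var$ in the $\var$-equation. The crucial geometric observation is that the support of $\phi_4(\cdot/A)$ lies in $\{|y|\ge A/2\}$, and since $A=e^{a/(\pi b)}$ is exponentially larger than $R_b\sim 2/b$, the profile $\tilde Q_b$ and the error $\Psi_b$ both vanish identically there. Consequently, on this region the $\var$-equation derived in Section~\ref{eqforvar} collapses (after using the explicit form of $u$ and rewriting in $(s,y)$ variables) to
\[
i\partial_s\var+\partial_y^2\var+\lambda w\,\partial_y\var-\var+ib\Lambda\var+|\var|^4\var=\mathcal{M},
\]
where $\mathcal{M}$ gathers the modulation terms (all supported in $\{|y|\le R_b\}$) together with an error $i(\frac{\lambda_s}{\lambda}+b)\Lambda\var+i\frac{r_s}{\lambda}\partial_y\var+\tilde\gamma_s\var$ whose coefficients are $O(\Gamma_b^{1/2})$ by (\ref{parameter}).

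Multiplying by $\phi_4(y/A)\mu(y)\bar\var$ and taking the real part, using the identities $\mathrm{Im}(\bar\var\partial_y^2\var)=\partial_y\mathrm{Im}(\bar\var\partial_y\var)$ and $\mathrm{Re}(\bar\var\Lambda\var)=\tfrac12\partial_y(y|\var|^2)$, then integrating by parts and observing that the identity $\partial_y\mu=\lambda w\mu$ produces an exact cancellation of the curvature contributions from both the Laplacian and the $\lambda w\partial_y\var$ term, I arrive at
\[
\int\phi_4(y/A)\mu\,\partial_s|\var|^2\,dy = \frac{b}{A}\int\phi_4'(y/A)\,y\mu|\var|^2\,dy + \frac{2}{A}\int\phi_4'(y/A)\mu\,\mathrm{Im}(\bar\var\partial_y\var)\,dy + \mathcal{R}.
\]
On $y\in[A,2A]$ we have $\phi_4'(y/A)\cdot(y/A)\ge\tfrac12$ and $\mu\ge\tfrac12$ (the latter because \textbf{B5} forces $\lambda A$ to be super-exponentially small in $1/b$), so the first term on the right is bounded below by $\frac{b}{4}\int_{A\le|y|\le 2A}|\var|^2\,dy$; after combining with the prefactor $1/r\sim 1$, this is the source of the announced $\frac{b}{8}$ coefficient once a quarter of it is conceded to the Schrödinger flux.

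The remaining task is to absorb the Schrödinger flux and the remainder $\mathcal{R}$. Cauchy--Schwarz followed by a weighted Young inequality $xy\le\eta x^2+y^2/(4\eta)$ with $\eta\sim b$ bounds the flux by $\frac{b}{16}\int_{A\le|y|\le 2A}|\var|^2 + \Gamma_b^{a/2}\int|\partial_y\var|^2\mu\,dy$, the exponent $a/2$ arising from the balance between $1/A^2=e^{-2a/(\pi b)}$ and $\Gamma_b\sim e^{-\pi/b}$ once $a$ is taken small enough (together with the rough $L^2$ control of $\var$ from \textbf{B6} to dominate the boundary contributions at $[A/2,A]\cup[2A,3A]$). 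The modulation contributions in $\mathcal{R}$ are inner products of $\var$ against functions exponentially localized in $\{|y|\le R_b\}$ evaluated on $\{|y|\ge A/2\}$, hence bounded by $\Gamma_b^N$ for any $N$; the nonlinearity $|\var|^4\var$ gives $\mathrm{Re}(i|\var|^6)=0$ and drops out of the imaginary-part computation; and the derivatives $\partial_s(1/r)$, $\partial_s A=-\frac{a b_s}{\pi b^2}A$, $\partial_s\mu$ are absorbed using (\ref{parameter}) together with the super-exponential smallness of $\lambda A$ from \textbf{B5}. The main obstacle is the Schrödinger-flux estimate: the delicate balance between the exponential largeness of $A$ and the exponential smallness of $\Gamma_b$ is what pins down the admissible range of the constant $a$ and motivates the specific choice (\ref{choicea}).
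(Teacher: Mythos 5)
Your overall strategy is the paper's: differentiate the localized mass, use the flux identity for the Schr\"odinger part, and extract the positive contribution of the $b\Lambda\var$ term, which is then used to absorb the flux after a weighted Young inequality. The difference is that you run the computation entirely in the rescaled variables $(s,y)$ with the moving weight $\mu(s,y)=h(\lambda(s)y+r(s))$, whereas the paper computes $\frac{d}{dt}\int\phi_4\bigl(\frac{r-r(t)}{\lambda A}\bigr)|u|^2\,dx$ in the original variables with the fixed Riemannian measure and only then passes to $(s,y)$. This choice is where your write-up has a genuine gap: the term $\partial_s\mu$ is \emph{not} absorbable by the smallness of $\lambda A$ as you claim. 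Indeed $\partial_s\mu=w\mu(\lambda_s y+r_s)$ and, since $\phi_4(y/A)$ equals $1$ for all $y\geq 3A$, the integration runs up to $y\sim(\rho-r)/\lambda$, where $\lambda_s y\approx-b\lambda y$ is of size $b\cdot O(1)$, not $O(\lambda A)$. The resulting contribution $b\lambda\int\phi_4 w\,y\,\mu|\var|^2\,dy$ is of order $b\|\tilde u\|_{L^2}^2\sim b\,\delta(\alpha^{\ast})$, which dwarfs both the gain $\frac{b}{8}\int_{A\leq|y|\leq 2A}|\var|^2$ and the allowed error $\Gamma_b^2$ in (\ref{dispersion}). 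This term is only harmless because it cancels \emph{exactly} against the boundary term $b\int\phi_4\,(\partial_y\mu)\,y|\var|^2$ produced when you integrate the $ib\Lambda\var$ and $i(\frac{\lambda_s}{\lambda}+b)\Lambda\var$ contributions by parts (using $\partial_y\mu=\lambda w\mu$); you identified the analogous cancellation for the Laplacian but not this one, and as written your plan would stall on an uncontrollable term. The paper's route makes the cancellation automatic because the measure $dx$ does not move.

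Two smaller points. First, the flux contributions supported on $[A/2,A]\cup[2A,3A]$ cannot be dominated by the crude $L^2$ bound \textbf{B6}: that only gives $b\,(\alpha^{\ast})^{1/5}$, again far larger than $\Gamma_b^2$. They are instead absorbed because the positive term $b\int\phi_4'(y/A)\frac{y}{A}\mu|\var|^2\,dy\geq\frac{b}{4}\int\phi_4'(y/A)\mu|\var|^2\,dy$ is bounded below on the \emph{whole} support of $\phi_4'$, not just on $[A,2A]$; one concedes a fraction of it everywhere and only keeps the piece over $[A,2A]$ in the final statement. Second, your bookkeeping of the exponent $a/2$ from the balance of $1/A^2$ against $\Gamma_b$ inherits the same looseness as the paper's (with $A=e^{a/(\pi b)}$ one has $1/A=\Gamma_b^{a/\pi^2}$ up to lower order, which is larger than $\Gamma_b^{a/2}$); this is a constants issue, immaterial for the later use of the lemma, so I do not count it against you.
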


\begin{proof}
Let us denote by $L$ the left hand side of the above inequality. Then 
\[
 L= - \frac{r_s}{r(s)^2} \int \phi_4 (\frac{y}{A}) | \var |^2 \mu(y) dy + \frac{1}{r(s)} \frac{d}{ds} \left ( \int \phi_4(\frac{y}{A}) |\var| ^2 \mu(y) dy \right ).
\]
Using that $u$ satisfies the equation (\ref{nls}), we have after computations
\begin{eqnarray*}
 \frac{d}{dt} \int \phi_4 \left (  \frac{r - r(t) }{\lambda A} \right )| u|^2 dx & =& -\frac{1}{ \lambda ^2 A} \int |u|^2 \phi_4'\left (\frac{r- r(t)}{\lambda A} \right ) \left ( \frac{r_s}{\lambda} + \frac{r-r(t)}{\lambda} \left( \frac{\lambda_s}{\lambda} + \frac{A_s}{A} \right ) \right ) h dr d \theta \\
   & & + \frac{2 }{ \lambda A} \mathrm{Im} \int u \partial_r \overline{u} \phi_4' \left ( \frac{r-r(t)}{\lambda A} \right ) h dr d\theta.
\end{eqnarray*}
Remark that with the definition of $A$ and the support of $\tilde{Q}_b$, 
\[
\tilde{Q}_b \left ( \frac{r-r(t)}{\lambda (t) }   \right ) =0 \quad \textrm{if} \quad \frac{ r - r(t)}{A \lambda (t)} \geq \frac{1}{2},
\]
so that we can replace $u$ by
\[
\frac{1}{\sqrt{\lambda}} \var \left (t, \frac{r-r(t)}{\lambda} \right ) e^{i \gamma}
\]
in the above formula and coming back to the $(s,y)$ variable, this gives after writing $\frac{\lambda_s}{\lambda}=(\frac{\lambda_s}{\lambda} + b) -b $,
\begin{eqnarray*}
 \frac{d}{ds} \int \phi_4 ( \frac{y}{A})  | \var|^2 \mu(y) dy & =& -\frac{1}{ A} \int | \var |^2 \phi_4'(\frac{y}{A}) \left ( \frac{r_s}{\lambda} + y \left ( \frac{\lambda_s}{\lambda} +b + \frac{A_s}{A} \right ) \right ) \mu(y) dy  \\
  & & + \frac{2}{A } \mathrm{Im} \int \var \partial_y \overline{\var} \phi_4'( \frac{y}{A}) \mu(y) dy + b \int | \var |^2 \phi_4' ( \frac{y}{A}) \frac{y}{A} \mu(y) dy \\
  & & =B_1+B_2+B_3.
\end{eqnarray*}
Using estimates on parameters (\ref{parameter}), (\ref{gamma}), and $|A_s/A| \leq |b_s  /b^2| \leq \Gamma_b^a$, we have,
\[
| B_1| \leq \Gamma_b ^{\frac{a}{2}} \int  \phi_4' (\frac{y}{A}) | \var |^2 \mu(y) dy .
\] 
For $B_2$, we write
\begin{eqnarray*}
| B_2| &\leq & \frac{1}{A} \left ( \int \phi_4' (\frac{y}{A}) | \var |^2\mu(y) dy + \int | \partial_y \var |^2 \phi_4' (\frac{y}{A}) | \mu(y) dy \right ) \\
      &\leq  & \Gamma_b ^{\frac{a}{2}} \left ( \int \phi_4' (\frac{y}{A}) | \var |^2  \mu(y) dy + \int | \partial_y \var  |^2 \mu(y) dy \right )  \\
      &  \leq & \Gamma_b^{a/2} \int \phi_4' (\frac{y}{A}) | \var|^2 \mu(y) dy + \Gamma_b^{a/2} \int | \partial_y \var |^2 \mu(y) dy.
\end{eqnarray*}
Remark that with our choice of $\phi_4$, $y/A \geq 1/2$ and this provides a lower bound for $B_3$:
\[
B_3 \geq \frac{b}{2} \int \phi_4'(\frac{y}{A})      |\var| ^2     \mu(y) dy.
\]
Thus, we find using $\mu(y) \sim 1$ for $\alpha^{\ast}$ small and $\phi_4'(x) \geq 1/2$:
\begin{equation} \label{bi}
 B_1+B_2+B_3 \geq \frac{b}{8} \int_{ A \leq |y| \leq 2A} | \var |^2 dy.
\end{equation}
Moreover from (\ref{parameter}), 
\[
\left | \frac{r_s}{r^2} \int \phi_4( \frac{y}{A}) | \var |^2 \mu(y) dy \right | \leq \lambda \int | \tilde{u} |^2 \leq \Gamma_b^2. 
\] 
The estimate above and (\ref{bi}) conclude the proof of the lemma. 
\end{proof}

\begin{lem}[\textbf{Lyapunov functional}] \label{lya2}
 There exists a functional $\mathcal J$ and a constant $c >0$ such that for every $s \in [s_0, s_1)$,
\begin{equation} \label{lya}
 \partial_s \mathcal J \leq -c b (s) \left (  \Gamma_{b(s)} + \tilde{ \mathcal E}(s) + \int _{A \leq |y|\leq 2A} | \var|^2 \right ) ,
\end{equation}
and 
\begin{equation} \label{orderb2}
 | \mathcal J(s)-d_0 b^2 | \leq \delta ( \alpha^{\ast}) b^2 ,
\end{equation}
where $d_0>0$ is given by (\ref{supercriticalmass}).
Moreover, $\mathcal J$ is given by the following expression:
\begin{eqnarray*}
 \mathcal J (s) & =& -\frac{\delta_1}{16} \left ( b \tilde{f}_1 - \int _0^b \tilde{f}_1+ b  \mathrm{Im} (\var, \Lambda \tilde{\zeta}) \right ) +\frac{1}{r(s)} \int \left ( 1- \Phi_4( \frac{y}{A}) \right )  |\var |^2 \mu(y) dy \\
                          & & + \frac{1}{r(s)} \left (  \int |\tilde{Q}_b |^2- \int |Q|^2 +2 \mathrm{Re} \int \var \overline{ \tilde{Q}_b} \mu(y) dy \right ) ,
\end{eqnarray*}
where
\[
 \tilde{f}_1(s) =\frac{b}{4} \| y \tilde{Q}_b\|_{L^2}^2 + \frac{1}{2} \mathrm{Im} \left( \int y \partial_y \tilde{\zeta} \overline{\tilde{\zeta}} \right ).
\]
\end{lem}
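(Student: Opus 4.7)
The plan is to assemble $\mathcal J$ from three monotone quantities, so that the bad term of each is absorbed by the good term of the next: the refined virial (\ref{refined2}) multiplied by $b$, the mass-dispersion (\ref{dispersion}), and a mass-conservation correction. Write $f_1 = \tilde f_1(b) + g$ with $g = (\var_2,\Lambda\tilde\zeta_{\mathrm{Re}}) - (\var_1,\Lambda\tilde\zeta_{\mathrm{Im}})$. Since $\tilde f_1$ depends on $s$ only through $b(s)$, the antiderivative identity
\[
\frac{d}{ds}\Bigl(b\tilde f_1 - \int_0^b\tilde f_1 + b g\Bigr) = b\,\frac{df_1}{ds} + b_s\,g
\]
combined with multiplication of (\ref{refined2}) by $-\delta_1 b/16$ (legal since $b>0$) yields
\[
\partial_s\Bigl[-\tfrac{\delta_1}{16}\bigl(b\tilde f_1 - \textstyle\int_0^b\tilde f_1 + b g\bigr)\Bigr] \leq -\tfrac{\delta_1^2 b}{16}\tilde{\mathcal E} - \tfrac{c\delta_1 b}{16}\Gamma_b + \tfrac{b}{16}\!\!\int_{A\leq|y|\leq 2A}\!\!|\var|^2 + \mathcal R_1,
\]
where the error $\mathcal R_1$ coming from $b_s g$ is controlled by (\ref{parameter}) and (\ref{zetatilde3}).

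Next I cancel the remaining $\tfrac{b}{16}\int_{A\leq|y|\leq 2A}|\var|^2$ term by (\ref{dispersion}), which produces a gain of exact size $b/8$ on this very quantity when differentiating $\tfrac{1}{r}\int\phi_4(y/A)|\var|^2\mu$. Using mass conservation $\|u(t)\|_{L^2(M)}^2 = \|u_0\|_{L^2(M)}^2$ expanded in the decomposition $u = (\tilde Q_b+\var)e^{i\gamma}/\sqrt\lambda$, together with the approximation $\mu(y)=h(\lambda y+r)\approx h(1)=1$ on the support of $\tilde Q_b$ (valid by \textbf{B1}, \textbf{B5}), the sum of the last two summands of $\mathcal J$ rewrites, modulo $O(\Gamma_b)$ errors, as
\[
-\,\tfrac{1}{r(s)}\int\phi_4(y/A)|\var|^2\mu(y)\,dy + \tfrac{1}{r(s)}\bigl(\|u_0\|_{L^2}^2 - \|Q\|_{L^2}^2\bigr).
\]
Its derivative, bounded via (\ref{dispersion}), contributes $-\tfrac{b}{8}\int_{A\leq|y|\leq 2A}|\var|^2$ together with the error $\Gamma_b^{a/2}\int|\partial_y\var|^2\mu+\Gamma_b^2$ and an $|r_s|/r^2$ piece controlled by (\ref{parameter}). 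Adding to the output of the previous step, the $\int_{A\leq|y|\leq 2A}|\var|^2$ terms combine into a definite negative contribution, and the residual kinetic error is absorbed into $b\tilde{\mathcal E}$ because $\Gamma_b^{a/2}\ll b$ as $b\downarrow 0$; this proves (\ref{lya}).

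The size estimate (\ref{orderb2}) follows from the supercritical-mass expansion (\ref{supercriticalmass}), giving $\int|\tilde Q_b|^2-\int|Q|^2=d_0 b^2+O(b^4)$, with $b^4\leq(\alpha^*)^{1/2}b^2$ via (\ref{mass}); the virial piece of $\mathcal J$ is $O(\delta_1 b^2)$ by direct computation using $\tilde f_1=\tfrac{b}{4}\|yQ\|_{L^2}^2+O(b^3)$; the weighted $L^2$ piece $\tfrac{1}{r}\int(1-\phi_4)|\var|^2\mu$ is bounded by $\|\tilde u\|_{L^2}^2\leq\delta(\alpha^*)$ via \textbf{B6}; and the cross term $2\mathrm{Re}\int\var\overline{\tilde Q_b}\mu$ is bounded by the usual energy-type argument via (\ref{energy}). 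The principal delicacy is coefficient matching: the factor $1/\delta_1$ in front of the bad virial term is dominated by the explicit $b/8$ gain from (\ref{dispersion}) only after multiplying the virial by $b$, which is exactly what forces the non-obvious $b\tilde f_1-\int_0^b\tilde f_1$ structure. A secondary technicality is that the parameter $a$ in the definition (\ref{choicea}) of $A$ must be small enough that $\Gamma_b^{a/2}\ll b$ uniformly in the bootstrap window.
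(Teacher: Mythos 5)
Your derivation of (\ref{lya}) follows the paper's route exactly: multiply the refined virial (\ref{refined2}) by $\frac{\delta_1 b}{16}$, add (\ref{dispersion}), use the antiderivative identity $\partial_s ( b \tilde f_1 - \int_0^b \tilde f_1 + b g ) = b (f_1)_s + b_s g$ to recover a total derivative, and substitute the mass conservation identity to convert $-\frac{1}{r(s)}\int \phi_4(\frac{y}{A}) |\var|^2 \mu(y)\,dy$ into the last two summands of $\mathcal J$. Two small points you leave implicit there: the residual error $\Gamma_b^{a/2}\int |\partial_y \var|^2 \mu(y)\,dy$ involves $\var$ and not $\tilde\var$, so one must split $\var = \tilde\var + \tilde\zeta_b$ and invoke (\ref{zetatilde3}) before absorbing it into $b\,\tilde{\mathcal E}$, which is what produces the condition $a \geq 4C\eta$; and your closing remark about $a$ is backwards, since $\Gamma_b^{a/2} \ll b$ holds for \emph{every} fixed $a>0$ ($\Gamma_b$ being exponentially small in $1/b$), so taking $a$ smaller only weakens that inequality.

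The genuine gap is in (\ref{orderb2}). You bound the weighted piece $\frac{1}{r(s)}\int (1-\phi_4(\frac{y}{A})) |\var|^2 \mu(y)\,dy$ by $\|\tilde u\|_{L^2}^2 \leq \delta(\alpha^{\ast})$ via \textbf{B6}, but the statement requires this term to be $\leq \delta(\alpha^{\ast})\, b^2$, with the multiplicative factor $b^2$. Since $b(s) \to 0$ along the bootstrap interval, a bound by a fixed small constant is useless here, and the multiplicative form is exactly what is needed afterwards to convert the monotonicity of $\mathcal J$ into the upper bound $b \leq 4\pi/(3\log s)$. The correct argument must exploit that $1-\phi_4(y/A)$ is supported in $|y| \leq 3A$, i.e. in a ring $\mathcal C$ of physical width of order $A\lambda$: by Cauchy--Schwarz against $\mathrm{Vol}(\mathcal C)^{1/2} \sim (A\lambda)^{1/2}$, the radial Sobolev embedding $H^{1/4} \hookrightarrow L^4$ of Proposition \ref{sobolevradial} and interpolation, one obtains a bound of the form $C (A\lambda)^{1/4} + A^{1/2} \mathcal E$, which is $\leq \delta(\alpha^{\ast}) b^2$ because $\lambda$ is doubly exponentially small in $1/b$ by \textbf{B5} while $A = e^{a/(\pi b)}$ is only singly exponential, and $\mathcal E \leq \Gamma_b^{3/4}$ by \textbf{B3}. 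Your one-line bound skips precisely this step, which is the main delicacy of (\ref{orderb2}); the remaining size estimates (supercritical mass for $\int|\tilde Q_b|^2 - \int|Q|^2$, the $O(\delta_1 b^2)$ virial piece, the cross term) are handled as in the paper.
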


\begin{proof}
We multiply (\ref{refined2}) by $\frac{\delta_1 b}{16}$ and sum with (\ref{dispersion}) to obtain
\begin{multline} \label{lya3}
\frac{\delta_1 b }{16} (f_1)_s + \left ( \frac{1}{r(s)} \int \phi_4 ( \frac{y}{A}) | \var |^2 \mu(y) dy \right )_s \geq \frac{\delta_1^2 b}{16} \tilde {\mathcal{E}} + \frac{C \delta_1}{16} b \Gamma_b - \Gamma_b^{\frac{a}{2}} \int | \partial_y \var |^2 \mu(y) dy \\ + \frac{b}{16} \int _{A \leq |y| \leq 2A} |\var |^2 - \Gamma_b^2. 
\end{multline}
We rearrange the left hand side to make appear the derivative of $\mathcal J$; this gives using the product rule:
\begin{equation} \label{product}
\left ( \frac{\delta_1}{16} \left ( b \tilde{f}_1 - \int_0^b \tilde{f}_1 + b  \mathrm{Im} (\var, \Lambda \tilde{\zeta}) \right ) + \frac{1}{r(s)} \int \phi_4 ( \frac{y}{A}) | \var |^2 \mu(y) dy \right )_s- \frac{\delta_1}{16} b_s \mathrm{Im} ( \var, \Lambda \tilde{\zeta}) .
\end{equation}
We use the mass conservation:
\[
 \int | \var |^2 \mu(y) dy + \int | \tilde{Q}_b |^2 \mu(y) dy + 2 \mathrm{Re} \int  \var \overline{\tilde{Q}}_b \mu(y) dy = \int | u_0 |^2 ,
\]
to write 
\begin{eqnarray*}
 \int \phi_4 ( \frac{y}{A}) | \var |^2 \mu(y) dy &= &\int |u_0|^2 - \int | \tilde{Q}_b|^2 \mu(y) dy - 2 \mathrm{Re} \int \var \overline{ \tilde{Q}_b} \mu(y) dy \\
   & & - \int \left ( 1- \phi_4(\frac{y}{A} ) \right )| \var |^2 \mu(y) dy;
\end{eqnarray*}
so that (\ref{product}) becomes
\[
 - \mathcal J_s - \frac{\delta_1}{16} b_s \mathrm{Im} ( \var, \Lambda \tilde{\zeta} ) - \left ( \frac{1}{r(s)} \left ( \int |u_0|^2 - \int | \tilde{Q}_b |^2 + \int |Q|^2 \right ) \right ) _s ,
\]
that we may rewrite according to (\ref{parameter}), \textbf{B3}, \textbf{B5}, and $\| \Lambda \tilde{\zeta} \|_{L^2} \leq \Gamma_b^{3/8}$ (see Lemma \ref{lemzeta}),
\[
 - \mathcal J_s + \mathcal O( \Gamma_b ^{9/8} ).
\]
In the right hand side of (\ref{lya3}), the only term that we need to control is 
\[
 \Gamma_b^{\frac{a}{2}} \int | \partial_y \var |^2 \mu(y) dy;
\]
that we may bound by (if $a \geq 4 C \eta$),
\begin{eqnarray*}
 \Gamma_b^{\frac{a}{2}} \int | \partial_y \var |^2 \mu(y) dy & \leq & \Gamma_b^{\frac{a}{2}} \left ( \int | \partial_y \tilde{\zeta} | ^2 + \int | \partial_y \tilde{\var} |^2 \mu(y) dy \right ) \\
                                                             & \leq & \Gamma_b^{\frac{a}{2}} \left (  \Gamma_b^{1- C \eta} + \int | \partial_y \tilde{\var} |^2 \mu(y) dy \right ) \\
                                                             & \leq & \Gamma_b^{1+\frac{a}{4}} + \Gamma_b^{\frac{a}{2}} \int | \partial_y \tilde{\var} |^2 \mu(y) dy.
\end{eqnarray*}
This shows (\ref{lya}). The last thing to do is to prove that $\mathcal J$ is of order $b^2$ (\ref{orderb2}). We use that $\tilde{Q}_b$ has a supercritical mass \ref{supercriticalmass} so that 
\begin{eqnarray*}
 \mathcal J(s)&=&d_0 b^2 + b^2 \delta( \alpha^{\ast }) + \frac{1}{r(s)} \left (  2 \mathrm{Re} \int \var \overline{   \tilde{Q}_b} \mu(y)dy +\int \left ( 1- \phi_4 ( \frac{y}{A}) \right ) | \var |^2 \mu(y) dy. \right ) \\
  & & - \frac{ \delta_1}{16} \left ( b \tilde{f}_1 - \int_0^b \tilde{f}_1 + b \mathrm{Im} ( \var, \Lambda \tilde{\zeta} ) \right ) .
\end{eqnarray*}

Now, we prove
\begin{eqnarray*}
 \int \left ( 1- \phi_4( \frac{y}{A} )\right ) | \var |^2 \mu(y) dy &\leq& \delta( \alpha^{\ast}) b^2.
\end{eqnarray*}
Indeed, let us denote by $\mathcal C$ the ring $\{x \in M, - 2 A \lambda +r(t) \leq r(x) \leq 2A \lambda + r(t) \}$ and $\phi$ a radial, smooth, positive cut-off such that $\phi(x)=1$ if $x \in \mathcal C $, $\phi(x) =0$ if $r(x) \geq 2A \lambda +(A \lambda )^{1/2} + r(t)$ or $ r(x) \leq -2A \lambda -(A \lambda )^{1/2}  +r(t)$ with the bound $\| \nabla \phi \|_{L^{\infty}} \leq C / (\lambda A)^{1/2}$. Then by Cauchy-Schwarz inequality, radial Sobolev embedding (see Proposition \ref{sobolevradial}) $H^{1/4} \hookrightarrow L^4  $ and interpolation, we have
\begin{eqnarray*}
 \int \left ( 1- \phi_4( \frac{y}{A}) \right ) | \var |^2 \mu(y) dy  \leq   \int_{|y| \leq 3A} | \var |^2 &\leq & \int_{\mathcal C} | \tilde{u} (x) |^2 dx  \\
                                                                  & \leq &  \mathrm{Vol} ( \mathcal C) ^{\frac{1}{2}} \| \phi \tilde{ u} \|_{L^4} ^2 \\
                                                                  & \leq & \left ( \int_{-2A \lambda + r(t)}^{2A \lambda + r(t)} h(r) dr  \right ) ^{\frac{1}{2}} \| \phi \tilde{u} \|_{H^{\frac{1}{4}}}^2  \\
                                                                 & \leq & C (A \lambda)^{\frac{1}{2}} \| \phi \tilde{u} \|_{L^2} ^{ \frac{3}{2}} \| \nabla ( \phi \tilde{u}) \|_{L^2} ^{\frac{1}{2}} \\
                                                                 & \leq &   C (A \lambda)^{\frac{1}{2}}   \left ( \| \nabla \phi \|_{L^{\infty}} ^{\frac{1}{2}} \| \tilde{u} \|_{L^2} ^{\frac{1}{2}} + \| \nabla \tilde{u} \|_{L^2}^{\frac{1}{2}} \right ).
\end{eqnarray*}
But, using on the one hand the bound on the derivative of $\phi$ and on the other hand 
\[
\| \nabla \tilde{u} \|_{L^2} = \frac{1}{\lambda} \left ( \int |\var|^2 \mu(y) dy \right ) ^{\frac{1}{2}} \leq \frac{1}{\lambda} \mathcal E ^{\frac{1}{2}} ,
\]
we obtain from \textbf{B3} and \textbf{B5} 
\[
\int \left ( 1- \phi_4( \frac{y}{A}) \right ) | \var |^2 \mu(y) dy  \leq C (A \lambda )^{\frac{1}{4}} + A^{\frac{1}{2} } \mathcal E \leq \delta( \alpha^{\ast}) b^2.
\]
Moreover, we have easily from (\ref{zetatilde3}):
\[
 |b \mathrm{Im} ( \var, \Lambda \tilde{\zeta})| +\left | \mathrm{Re} \int \var \overline{\tilde{Q}}_b \mu(y) dy \right  | \leq \delta(\alpha^{\ast}) b^2.
\]
The last term, namely 
\[
 \frac{\delta_1}{16} \left ( b \tilde{f}_1 - \int _0^b \tilde{f}_1 \right )
\]
is also less than $\delta(\alpha^{\ast})b^2$ using the fact that we can choose $\delta_1$ arbitrarily small in Lemma \ref{refined}.   
This proves Lemma \ref{lya2}.
\end{proof}
The following lemma is a consequence of the previous Lemmas; it gives in particular \textbf{C3} and the upper bound on $b$ which will allow us to prove the log log lower bound.
\begin{lem}
 There exists a constant $C>0$ such that for all $s \in [s_0, s_1)$,
\be \label{upperboundb}
 b(s) \leq \frac{4 \pi}{ 3 \log s} ,
\ee
\be \label{boundE}
 \mathcal E (s) \leq \Gamma_{b(s)}^{\frac{4}{5}},
\ee
\be \label{gammavar}
 \int _{s_0}^s \left ( \Gamma_{b ( \sigma) }+ \tilde{\mathcal E}  (\sigma)  \right ) d \sigma \leq C \alpha^{\ast}.
\ee
\end{lem}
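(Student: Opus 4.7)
The plan is to derive all three conclusions from the Lyapunov structure of Lemma \ref{lya2}, using (\ref{orderb2}) to convert information about $\mathcal J$ into information about $b$ via the equivalence $\mathcal J \sim d_0 b^2$, together with the asymptotic (\ref{gammabestimate}) of $\Gamma_b$.

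I would first prove the upper bound (\ref{upperboundb}) on $b$ by turning (\ref{lya}) into an autonomous ODE for $\mathcal J$. From $b \geq \sqrt{\mathcal J/((1+\delta)d_0)}$ (a consequence of (\ref{orderb2})) and the asymptotic $b \Gamma_b \sim D e^{-\pi/b}$ from (\ref{gammabestimate}), the Lyapunov inequality becomes
\[
\mathcal J_s \leq -c\exp\bigl(-(1+\delta')\pi\sqrt{d_0/\mathcal J}\bigr).
\]
Introducing $\Phi(u) = \exp\bigl((1+\delta')\pi\sqrt{d_0/u}\bigr)$ and computing $\frac{d}{ds}\Phi(\mathcal J(s)) = \Phi'(\mathcal J)\mathcal J_s$, one obtains $\frac{d}{ds}\Phi(\mathcal J(s)) \geq C/\mathcal J(s)^{3/2}$, which via \textbf{B2} is at least $C/(\alpha^\ast)^{3/8}$. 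Integrating between $s_0$ and $s$ yields $\Phi(\mathcal J(s)) \geq c\,s$ for $s$ sufficiently large, and inversion via $\mathcal J \sim d_0 b^2$ gives $e^{\pi/b(s)} \gtrsim s$, hence $b(s) \leq 4\pi/(3\log s)$; the required largeness of $s_0 = e^{3\pi/(4b_0)}$ is ensured by \textbf{A5}.

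For the integral bound (\ref{gammavar}), I would start by integrating (\ref{lya}) directly between $s_0$ and $s$, which together with $\mathcal J(s) \geq 0$ from (\ref{orderb2}) and $\mathcal J(s_0) \leq C b_0^2 \leq C(\alpha^\ast)^2$ gives the weighted bound $\int_{s_0}^s b(\sigma)\bigl(\Gamma_{b(\sigma)} + \tilde{\mathcal E}(\sigma)\bigr)\,d\sigma \leq C(\alpha^\ast)^2$. To remove the $b(\sigma)$ weight, I would use the virial inequality (\ref{virial2}) in its integrated form $\delta\int_{s_0}^s \mathcal E\,d\sigma \leq (b(s) - b_0) + \int_{s_0}^s \Gamma_b^{1-C\eta}\,d\sigma$: the first right-hand term is $\leq C\alpha^\ast$ by \textbf{B2}, and the second is made small by combining the upper bound (\ref{upperboundb}) with the largeness of $s_0$ (the main contribution to the integral comes from near $s_0$ and is exponentially small in $1/b_0$). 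The passage from $\mathcal E$ to $\tilde{\mathcal E}$ uses $\tilde{\var} = \var - \tilde\zeta_b$ and the bounds (\ref{zetatilde3}), giving $\tilde{\mathcal E} \leq 2\mathcal E + C\Gamma_b^{1-C\eta}$. For the pointwise bound (\ref{boundE}) on $\mathcal E$, I would rewrite (\ref{virial2}) as $\delta\mathcal E \leq b_s + \Gamma_b^{1-C\eta}$ and combine with the parameter estimate (\ref{parameter}), $|b_s| \leq C\mathcal E + C\Gamma_b^{1-C\eta}$: feeding in the bootstrap \textbf{B3} shows $|b_s| \leq C\Gamma_b^{3/4}$ pointwise, and reinjecting gives $\mathcal E \leq C\Gamma_b^{\min(3/4,\,1-C\eta)}$, which improves to $\Gamma_b^{4/5}$ by choosing $\eta$ small enough that $1-C\eta \geq 4/5$.

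The main obstacle I expect is the passage from the weighted integral bound $\int b(\Gamma_b + \tilde{\mathcal E}) \lesssim (\alpha^\ast)^2$ to the unweighted (\ref{gammavar}): a naive application of the lower bound $b \geq 3\pi/(4\log\sigma)$ from (\ref{boundb}) introduces a $\log\sigma$ factor that destroys convergence, so the argument must instead route through the virial identity and exploit the exponential-in-$1/b$ decay of $\Gamma_b^{1-C\eta}$ together with the largeness of $s_0$ to absorb the loss.
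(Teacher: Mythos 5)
Your argument for (\ref{upperboundb}) is essentially the paper's (which integrates $\frac{d}{ds}e^{\frac{5\pi}{4}\sqrt{d_0/\mathcal J}}\geq 1$ using (\ref{lya}), (\ref{orderb2}) and (\ref{gammabestimate})), but the other two parts contain genuine gaps. For (\ref{boundE}), your closing step fails: combining $\delta\mathcal E\leq b_s+\Gamma_b^{1-C\eta}$ with $|b_s|\leq C\mathcal E+C\Gamma_b^{1-C\eta}$ only yields $\delta\mathcal E\leq C\mathcal E+C\Gamma_b^{1-C\eta}$, and nothing forces the constant $C$ from (\ref{parameter}) to be smaller than the $\delta$ from the virial estimate, so the $\mathcal E$ terms do not cancel. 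Feeding in \textbf{B3} instead gives $|b_s|\leq C\Gamma_b^{3/4}$ and hence $\mathcal E\leq C'\Gamma_b^{3/4}$; but $\min(3/4,\,1-C\eta)=3/4$ and $\Gamma_b^{3/4}>\Gamma_b^{4/5}$ since $\Gamma_b<1$, so this merely reproduces the bootstrap \emph{hypothesis} with a worse constant rather than improving it to the conclusion $\Gamma_b^{4/5}$. The real difficulty is that $b_s$ may be positive, in which case the virial inequality gives no pointwise control of $\mathcal E$; the paper handles this with the refined two-sided expansion $\frac{1}{C}\mathcal E-\Gamma_b^{1-Ca}\leq\mathcal J-f_2(b)\leq CA^2\mathcal E+\Gamma_b^{1-Ca}$, the monotonicity of $\mathcal J$ and of $f_2$ near $b=0$, and a backtracking argument to the last time $s_2$ at which $b$ was non-increasing, where $\mathcal E(s_2)\leq\Gamma_{b(s_2)}^{6/7}$ holds either by \textbf{A3} (if $s_2=s_0$) or by the virial estimate (if $b_s(s_2)=0$). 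None of this machinery appears in your proposal.

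For (\ref{gammavar}) there are two problems. First, the deweighting step you propose does not work: from $b(\sigma)\leq 4\pi/(3\log\sigma)$ one only gets $\Gamma_{b(\sigma)}^{1-C\eta}\leq C(\log\sigma)^{1-C\eta}\sigma^{-3(1-C\eta)/4}$, and since $3(1-C\eta)/4<1$ this is not integrable on $[s_0,\infty)$; so $\int_{s_0}^s\Gamma_b^{1-C\eta}\,d\sigma$ is not small and is not even bounded uniformly in $s$ by these means (and the largeness of $s_0$ does not help, precisely because the tail diverges). Second, the integrated first virial estimate controls only $\int\mathcal E$; it says nothing about the term $\int\Gamma_b$ which is part of the conclusion (\ref{gammavar}), and which you cannot assume finite since that is what is being proved. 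The paper's route avoids both issues: divide (\ref{lya}) by $\sqrt{\mathcal J}$ \emph{before} integrating; since $b/\sqrt{\mathcal J}\geq 1/(2\sqrt{d_0})$ by (\ref{orderb2}), the weight cancels exactly and one obtains $c\int_{s_0}^s(\Gamma_b+\tilde{\mathcal E})\,d\sigma\leq 2\bigl(\sqrt{\mathcal J(s_0)}-\sqrt{\mathcal J(s)}\bigr)\leq Cb_0\leq C\alpha^{\ast}$ directly from $\mathcal J(s_0)\leq Cb^2(s_0)$ and \textbf{A2}.
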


\begin{proof}
 The proof of the upper bound on $b$ is a consequence of (\ref{lya}) and (\ref{orderb2}); indeed it imply:
\[
 \frac{d}{ds} e^{\frac{ 5 \pi}{4} \sqrt{ \frac{d_0}{ \mathcal J}}} =-\frac{5 \pi}{8} \sqrt{d_0} \mathcal J_s \mathcal J^{- \frac{3}{2} } e^{\frac{ 5 \pi}{4} \sqrt{ \frac{d_0}{ \mathcal J}}} \geq 1.
\]
We integrate the last inequality in time between $s$ and $s_0=e^{\frac{3 \pi}{4 b_0}}$ and since $\mathcal J(s_0) \sim d_0 b ^2 (s_0)$, we have
\[
 e^{\frac{ 5 \pi}{4} \sqrt{ \frac{d_0}{ \mathcal J}}} \geq s+( e^{\frac{ 5 \pi}{4} \sqrt{ \frac{d_0}{ \mathcal J(s_0)}}}-s_0) \geq s;
\]
or equivalently
\[
\sqrt{ \frac{\mathcal J }{d_0}} \leq \frac{5 \pi}{4} \frac{1}{ \log (s)}.
\]
We conclude using again (\ref{orderb2}) and this proves the upper bound on $b$. Let us prove (\ref{boundE}). We will need to have a more precise error in the approximation $ \mathcal J \sim d_0 b^2$. Here the measure $\mu(y)dy$ does not play an significant role so that the improvement of (\ref{orderb2}) is the same than the Euclidean case.  One can show that (see \cite{MerRap2006} for a detailed proof)
\[
  \frac{1}{C} \mathcal E - \Gamma_b^{1- Ca} \leq \mathcal J(s) - f_2(b(s)) \leq C A^2 \mathcal E  + \Gamma_b^{1- C a},
\]
where $f_2$ is the function defined by
\[
 f_2(b)=\left ( \int |\tilde{Q}_b | - \int Q^2 \right ) - \frac{\delta_1}{32} \left ( b \tilde{f}_1(b) - \int_0^b \tilde{f}_1(v) dv \right )
\]
and satisfying 
\[
 \frac{d f_2}{d b^2} (0)  >0.
\]
Let $s \in [s_0, s_1)$. If $b_s(s) \leq 0$ then by the virial estimate (\ref{virial2}), (\ref{boundE}) holds. If $b_s(s) >0$, then by continuity, $b$ increases on an interval $[s_2,s]$ for some $s_2 \geq s_0$ and
we choose for $s_2$ the smallest time for which $b$ increases on $[s_2,s]$. Using the decay of $\mathcal J$ and the growth of $f_2$ near $b=0$, we have
\begin{eqnarray*}
\frac{1}{C} \mathcal E (s) & \leq & \mathcal J(s) - f_2(b(s)) + \Gamma_{b(s)}^{1-  C a} \\
      & \leq & J(s_2)-f_2(b(s_2)) + \Gamma_{b(s)}^{1-  C a} \\
      & \leq & C A^2(s_2) \mathcal E(s_2)+ \Gamma_{b(s_2)}^{1- Ca} +\Gamma_{b(s)}^{1-  C a} \\
\end{eqnarray*}
But we will prove that
\be \label{decay}
\mathcal E(s_2) \leq \Gamma_{b(s_2)}^{\frac{6}{7}}.
\ee
Indeed, if $s_2=s_0$, then (\ref{decay}) holds from \textbf{A3} and if $s_2 >s_0$ then $b_s(s_2)=0$ and (\ref{decay}) follows from the virial estimate (\ref{virial2}). Finally, using $b(s_2) \leq b(s)$,
\begin{eqnarray*}
 \frac{1}{C} \mathcal E(s) & \leq & C A^2(s_2)  \Gamma_{b(s_2 )}^{\frac{6}{7}} + \Gamma_{b(s_2)}^{1- Ca} +\Gamma_{b(s)}^{1-  C a} \leq \Gamma_{b(s)}^{\frac{5}{6}}.
\end{eqnarray*}
This proves (\ref{boundE}). For (\ref{gammavar}), we divide by $\sqrt{\mathcal J}$ the inequality (\ref{lya}) and we integrate between $s_0$ and $s$; this yields in particular
\[
 c \int_{s_0}^s b  \left ( \Gamma_b + \tilde{\mathcal E}  \right ) \leq \sqrt{\mathcal J (s_0)}- \sqrt{\mathcal J (s)}.
\]
To conclude (\ref{gammavar}), we use $\mathcal J(s_0) \leq C b^2(s_0)$ and \textbf{A2}.
\end{proof}

\section{Smallness of the critical norm}  \label{sectionnorm}

\medskip

 We now prove that the critical norm of the solution is small outside the blow up curve $r \sim 1$. This is the last part to prove Proposition \ref{prop1}. This smallness has been proved in \cite{Rap2006} in the Euclidean case using Strichartz estimates and local smoothing properties of the flow. Indeed, in $\mathbb R^2$ or more generally in $\mathbb R^N$, it is a classical fact that the solutions of the linear Schr\"odinger equation have locally one half a derivative more than the initial data. This smoothing effect relies on the good dispersive behavior of the free equation in $\R^N$. In the case $a = \infty$, the manifold is non compact and we could use local smoothing properties for such manifolds. However, if $a < \infty$, the manifold is compact and this regularization is false because of lack of dispersion due to the discrete spectrum of the Laplace operator. The method we use to overcome this difficulty follows \cite{RapSze2009} where the use of an almost conserved quantity at level $H^2$ allows to get estimate on the $H^2$ norm of the solution and then propagate this information to have estimates on $H^s$ norms for $s \leq 2$ and so deduce the smallness for $s=1/2$. The counterpart of this method is that it requires more regularity on the solution and so the stability holds in $H_{\mathrm{rad}}^2$ and no longer in $H^1_{\mathrm{rad}}$ where it is more natural. 
 
\medskip

To prove the smallness of the solution in the area $|r-1| > 1/2$, we will split the problem into two parts: firstly, the smallness in the region $r \in [1/4,1/2] \cup [3/2,7/4]$ and secondly the smallness  for $|r-1| > 3/4$. In the first area, we will constantly use radial Sobolev inequalities reflecting the fact that in this area, the equation looks like the one dimensional $L^2$-critical equation. In the second area, the point is the introduction of the $H^2$ pseudo energy. 

\medskip

Let us recall that we have the bootstrap assumptions \textbf{B7}: for all $t \in [0,t_1)$

\begin{eqnarray}
\| u(t) \|_{H^{\frac{1}{2}}(|r-1| > \frac{1}{2})} &\leq& (\alpha^{\ast})^{\frac{1}{10}}, \label{assumption1/2}\\
\|u(t) \|_{H^{1} (|r-1| > \frac{1}{2})} & \leq &\frac{1}{\lambda (t) ^{3 \delta}} ,\label{assumption1}\\
\|u(t) \|_{H^{\frac{3}{2}} (|r-1| > \frac{1}{2})} & \leq &\frac{1}{\lambda (t)^{1 + 2 \delta}} ,\label{assumption3/2}\\
\|u(t) \|_{H^{2} (|r-1| > \frac{1}{2})} & \leq &\frac{1}{\lambda (t)^{2 + \delta}}. \label{assumption2}
\end{eqnarray}
and we want to prove the improved bounds $\textbf{C7}$: for all $t\in [0,t_1)$

\begin{eqnarray}
\| u(t) \|_{H^{\frac{1}{2}}(|r-1| > \frac{1}{2})} &\leq& (\alpha^{\ast})^{\frac{1}{5}}, \label{conclusion1/2}\\
\|u(t) \|_{H^{1} (|r-1| > \frac{1}{2})} & \leq &\frac{1}{2\lambda (t)^{3 \delta}} , \label{conclusion1}\\
\|u(t) \|_{H^{\frac{3}{2}} (|r-1| > \frac{1}{2})} & \leq &\frac{1}{2\lambda (t)^{1 + 2 \delta}} ,\label{conclusion3/2}\\
\|u(t) \|_{H^{2} (|r-1| > \frac{1}{2})} & \leq &\frac{1}{2\lambda (t)^{2 + \delta}}. \label{conclusion2}
\end{eqnarray}

 The main points to prove the $H^{1/2}$ smallness outside the curve is the $L^2H^1$ integrability of the solution outside $r \sim 1$ and an almost monotonicity property of the parameter $\lambda$. These two properties are stated in the following lemmas.
\begin{lem} The parameter $\lambda$ is almost decreasing in the sense that: 
\begin{equation} \label{monotonicity}
\textrm{for all} \  s_2, s_3\in [s_0, s_1) \ \textrm{with} \ s_2 < s_3,  \quad   \lambda (s_3) \leq 3 \lambda (s_2).
\end{equation}
\end{lem}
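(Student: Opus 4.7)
The plan is to upgrade the parameter bound (\ref{parameter}) into pointwise monotonicity of $\lambda$, from which the stated inequality $\lambda(s_3) \leq 3 \lambda(s_2)$ follows trivially; the factor $3$ in the statement is only a safety margin, and the same argument in fact yields the stronger conclusion $\lambda(s_3) \leq \lambda(s_2)$.

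Starting from (\ref{parameter}), I would write
\[
\frac{\lambda_s}{\lambda} \leq -b(t) + C\mathcal E(t) + \Gamma_b^{1-C\eta},
\]
and absorb the two error terms on the right into $b(t)/2$. The crucial input is the explicit asymptotic (\ref{gammabestimate}), $\Gamma_b \sim (D/b)e^{-\pi/b}$ as $b \to 0$, which makes every positive power $\Gamma_b^{\alpha}$ exponentially smaller than $b$ itself. Combined with \textbf{C3}, which controls $\mathcal E(t)$ by $\Gamma_{b(t)}^{4/5}$, and with \textbf{B2}, which keeps $b(t)$ inside the small range $(0,(\alpha^{\ast})^{1/8})$, this yields, for $\alpha^{\ast}$ sufficiently small,
\[
C\mathcal E(t) + \Gamma_b^{1-C\eta} \leq \frac{b(t)}{2}, \qquad t \in [0,t_1).
\]

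Therefore $\lambda_s/\lambda \leq -b/2 \leq 0$ throughout the bootstrap interval. Integrating in $s$ from $s_2$ to $s_3$ gives
\[
\log\frac{\lambda(s_3)}{\lambda(s_2)} \leq -\frac{1}{2}\int_{s_2}^{s_3} b(\sigma)\, d\sigma \leq 0,
\]
which proves the claim. The only mildly delicate point is the quantitative choice of $\alpha^{\ast}$ ensuring that the exponential decay of $\Gamma_b$ in $1/b$ dominates on the whole range of $b$ permitted by the bootstrap; this is immediate from (\ref{gammabestimate}) once $\alpha^{\ast}$ is fixed small enough, and constitutes essentially the entire content of the proof.
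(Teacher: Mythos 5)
Your proof is correct, but it takes a different route from the paper's. The paper does not establish pointwise monotonicity here: it combines (\ref{parameter}) with the virial estimate (\ref{virial2}) to trade $\mathcal E(t)$ for $b_s$, obtaining $\frac{\lambda_s}{\lambda}+b \leq C b_s + C\Gamma_b^{1-C\eta}$, and then integrates; the boundary term $b(s_3)-b(s_2)$ is bounded by $(\alpha^{\ast})^{1/8}\leq 1$ via \textbf{B2}, giving $\int_{s_2}^{s_3}\lambda_s/\lambda \leq 1$ and hence the factor $e\leq 3$. You instead bound $\mathcal E(t)$ pointwise and absorb it, together with $\Gamma_b^{1-C\eta}$, into $b/2$, concluding $\lambda_s/\lambda\leq -b/2\leq 0$. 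This is legitimate: the bootstrap assumption \textbf{B3} already gives $\mathcal E(t)<\Gamma_{b(t)}^{3/4}$ on all of $[s_0,s_1)$, and $\Gamma_b\sim (D/b)e^{-\pi/b}$ makes $\Gamma_b^{3/4}+\Gamma_b^{1-C\eta}=o(b)$ uniformly on the range $0<b<(\alpha^{\ast})^{1/8}$, so you do not even need the improved bound \textbf{C3} and there is no circularity. Indeed the paper derives exactly your pointwise inequality in Section 7 ($|\lambda_s/\lambda+b|\leq\Gamma_b^{1/2}\leq b/3$, hence $-\lambda_s/\lambda\geq 2b/3$), just not in the proof of this lemma. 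What your route buys is the stronger conclusion that $\lambda$ is genuinely decreasing (so the constant $3$ is indeed slack); what the paper's route buys is independence from the pointwise smallness of $\mathcal E$ relative to $b$, relying only on the integrated information carried by $b_s$. Both are sound within the bootstrap.
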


\begin{proof}
 This is a consequence of the first virial estimate (\ref{virial2}) and (\ref{parameter}); indeed, combining these two inequalities, we deduce:
\[
  \frac{\lambda_s}{\lambda} + b \leq C \Gamma_b^{1-C \eta} + b_s.
\]
Using the sign and the smallness of $b$ (\ref{boundb}) and \textbf{B2}; we find by integrating the last inequality
\begin{eqnarray*}
 \int _{s_2}^{s_3} \frac{\lambda_s}{\lambda} &\leq &b(s_3)-b(s_2) +C  \int_{s_2}^{s_3} \left (-b + \Gamma_b^{1-C \eta} \right )  \leq b(s_3)-b(s_2) -\frac{C}{2} \int_{s_2}^{s_3} b \leq 1.
\end{eqnarray*}
Computing the left hand side, we get (\ref{monotonicity}).
\end{proof}

Let us now state the second lemma. Let $\chi$ be a positive and smooth cut-off function such that $\chi=1$ on $[2/32,30/32]$ and $\chi=0$ outside $[1/32,31/32]$. Then the following lemma holds. 
\begin{lem} \label{smallnessL2H1}
There exists a constant $C >0$ such that for all $t \in [0,t_1)$,
\[
\int_0^{t} \| \chi  u \|_{H^1}^2 \leq C \alpha ^{\ast}.
\] 
\end{lem}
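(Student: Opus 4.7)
The strategy is to reduce the lemma to the integrated bootstrap bound~(\ref{gammavar}) by a direct change of variables, after first showing that on $\mathrm{supp}(\chi)$ the solution equals its rest $\tilde u$. The profile $\tilde Q_b$ is supported in $[-R_b,R_b]$ with $R_b\sim 2/b$, so the peak $u_{\mathrm{peak}}=\lambda^{-1/2}\tilde Q_b((r-r(t))/\lambda)e^{i\gamma}$ is supported in $\{|r-r(t)|\leq\lambda R_b\}$. The bootstrap assumptions \textbf{B2} and \textbf{B5} make $\lambda R_b$ doubly-exponentially small in $1/b$, much smaller than $1/32$, and combined with $|r(t)-1|\leq(\alpha^\ast)^{1/2}$ from \textbf{B1} this places $\mathrm{supp}(u_{\mathrm{peak}})$ strictly inside $\{|r-1|<1/32\}$, disjoint from $\mathrm{supp}(\chi)\subset[1/32,31/32]$. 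Hence $\chi u=\chi\tilde u$ on $[0,t_1)$.

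Next, writing $\tilde u(t,r)=\lambda^{-1/2}\varepsilon(t,y)e^{i\gamma}$ with $y=(r-r(t))/\lambda$ and $h(r)\,dr=\mu(y)\,dy$, and applying the product rule to $\partial_r(\chi\tilde u)$, I obtain the pointwise bound $\|\chi u\|_{H^1}^2\leq C\|\tilde u\|_{L^2}^2+2\mathcal E(t)/\lambda^2$, which by \textbf{B6} is at most $C(\alpha^\ast)^{1/5}+2\mathcal E(t)/\lambda^2$. Integrating in time and substituting $d\tau=\lambda^2\,d\sigma$ yields
\[
\int_0^t\|\chi u\|_{H^1}^2\,d\tau\leq C(\alpha^\ast)^{1/5}T+2\int_{s_0}^{s(t)}\mathcal E(\sigma)\,d\sigma.
\]

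For the first term I would bound $T\leq\lambda_0\int_{s_0}^\infty e^{-\pi\sigma/\log\sigma}\,d\sigma\leq C\lambda_0$ using~(\ref{estimatelambda}); \textbf{A5} makes $\lambda_0$ doubly-exponentially small in $1/\alpha^\ast$, so $T(\alpha^\ast)^{1/5}\ll\alpha^\ast$. For the second, the decomposition $\varepsilon=\tilde\varepsilon+\tilde\zeta_b$ together with~(\ref{zetatilde3}) and the boundedness of $\mu$ on $\mathrm{supp}(\tilde\zeta_b)\subset\{|y|\leq 2A\}$ (which follows from $\lambda A\to 0$ and the growth condition~(\ref{growth})) yields $\mathcal E\leq 2\tilde{\mathcal E}+C\Gamma_b^{1-C\eta}$. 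The $\tilde{\mathcal E}$ contribution integrates to at most $C\alpha^\ast$ by~(\ref{gammavar}); for the $\Gamma_b^{1-C\eta}$ contribution, (\ref{boundb}) gives $\Gamma_b\leq C\sigma^{-4/3}\log\sigma$, so $\int_{s_0}^\infty\Gamma_b^{1-C\eta}\,d\sigma\leq Cs_0^{-1/3+O(\eta)}$, which is at most $e^{-c/\alpha^\ast}\ll\alpha^\ast$ thanks to \textbf{A2}+\textbf{A5}. Beyond the support disjointness in the first paragraph, which is the only geometric input, the rest is routine unfolding of the rescaling plus the already-established integrated estimate~(\ref{gammavar}); the doubly-exponential smallness of $\lambda_0$ built into \textbf{A5} is essential in rendering $T(\alpha^\ast)^{1/5}$ negligible against $\alpha^\ast$.
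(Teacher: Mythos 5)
Your architecture coincides with the paper's: the paper likewise bounds $\|\chi u\|_{H^1}$ by $\|\tilde u\|_{H^1}$ (implicitly using the support disjointness that you make explicit), splits $\int_0^t\|\tilde u\|_{H^1}^2\le t\,\|\tilde u\|_{L^\infty L^2}^2+\int_{s_0}^{s}\int|\partial_y\var|^2\mu(y)\,dy\,d\sigma$, disposes of the first term via (\ref{finitetime}) (i.e.\ $t\le\alpha^\ast$, which comes from (\ref{estimatelambda}) and \textbf{A5}), and invokes (\ref{gammavar}) for the second. Your support argument, the unfolding of the rescaling, and the bound $T\le C\lambda_0$ are all correct and are exactly the (partly implicit) steps of the paper's proof.

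The one place where you go beyond the paper is also where your argument breaks. To pass from $\int|\partial_y\var|^2\mu$ to the quantity $\tilde{\mathcal E}$ controlled by (\ref{gammavar}) you split off $\tilde\zeta_b$ and must then integrate $\Gamma_b^{1-C\eta}$ in $s$; you claim $\Gamma_b\le C\sigma^{-4/3}\log\sigma$ ``from (\ref{boundb})''. But $\Gamma_b\sim\frac{D}{b}e^{-\pi/b}$ is \emph{increasing} in $b$ near $0$, so the lower bound $b\ge\frac{3\pi}{4\log s}$ of (\ref{boundb}) only yields the \emph{lower} bound $\Gamma_b\ge c\,\sigma^{-4/3}\log\sigma$; an upper bound must come from (\ref{upperboundb}), $b\le\frac{4\pi}{3\log s}$, and it reads $\Gamma_b\le C\sigma^{-3/4}\log\sigma$. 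Since $\tfrac{3}{4}(1-C\eta)<1$, the resulting integrand decays only like $\sigma^{-3/4+O(\eta)}$ and its integral over $[s_0,\infty)$ diverges, so your step $\int_{s_0}^{\infty}\Gamma_b^{1-C\eta}\,d\sigma\lesssim s_0^{-1/3+O(\eta)}$ is unjustified: no pointwise-in-$s$ bound on $b$ available at this stage makes $\Gamma_b^{1-C\eta}$ integrable. The paper sidesteps the issue by applying (\ref{gammavar}) directly to $\int|\partial_y\var|^2\mu$ (treating $\mathcal E$ and $\tilde{\mathcal E}$ interchangeably in (\ref{L2H1})); if one insists on the decomposition $\var=\tilde\var+\tilde\zeta_b$, the $\tilde\zeta_b$ contribution has to be handled dynamically, through the same Lyapunov mechanism that produces (\ref{gammavar}), rather than by integrating a pointwise bound on $\Gamma_b$. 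You should either adopt the paper's direct citation of (\ref{gammavar}) or supply a dynamical control of $\int\Gamma_{b(\sigma)}^{1-C\eta}\,d\sigma$; as written, this step fails.
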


\begin{proof}
This lemma is a consequence of (\ref{gammavar}). Indeed, switching in polar coordonates and in $s$ variable, we obtain for $t<t_1$,
\begin{equation} \label{L2H1}
\int_0^{t}  \| \chi u \|_{H^1} ^2 \leq \int_0^{t} \|\tilde{u} \|_{H^1} ^2 \leq t \| \tilde{u} \|_{L^{\infty} L^2}^2 +  \int_{s_0}^{s} \int | \partial_y \var |^2 \mu(y) dy .
\end{equation}
But on the one hand, from the upper bound on $\lambda$ (\ref{estimatelambda}) and since $s_0$ can be made arbitrary large ,
\begin{equation} \label{finitetime}
t = \int_{s_0}^{s} \lambda ^2(\tau) d \tau \leq \lambda_0 \int_2 ^{\infty} e^{- \frac{2 \pi}{3} \frac{s} { \log s}} \leq \alpha^{\ast},
\end{equation}
and in particular $t_1 \leq \alpha ^{\ast} < \infty$. On the other hand, from (\ref{gammavar}), the second term in the right hand side of (\ref{L2H1}) is less than $C\alpha^{\ast}$. We conlude thanks to the $L^2$ boundedness of $\tilde{u}$.
\end{proof}

\medskip

Let us now introduce the $H^2$ pseudo-energy. An important fact, that we will prove later, is that this energy is controled from below by the $H^2$ norm and therefore it allows us to get estimates on this norm. 

\begin{lem}[\textbf{$H^2$ pseudo-energy}]
Let $E_2$ be the following functional
\[
E_2(u)=\int | \Delta u |^2 -3 \int | \nabla u|^2 |u|^4 -2 \mathrm{Re} \int ( \nabla u)^2 |u|^2 \overline{u}^2.
\]
then $E_2$ is an almost conserved quantity in the sense that there exists a constant $C>0$ such that for all $t \in [0,t_1)$,
\begin{equation} \label{almost}
\frac{d}{dt} E_2(u) \leq \frac{C}{{\lambda(t)}^{6 + \frac{3 \delta}{2}}}.
\end{equation}
\end{lem}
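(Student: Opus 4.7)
\bigskip

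\textbf{Proof sketch.} The plan is to differentiate $E_2(u)$ in time directly, substitute the equation (\ref{nls}), and perform integration by parts with respect to the volume form $h(r)\,dr\,d\theta$ in order to exhibit the same algebraic cancellations that make $E_2$ conserved for the flat quintic flow on $\mathbb{R}^2$; the residual terms arise only from commutators involving the factor $h'/h$ in $\Delta_M$ and from genuine septic interactions, and are controlled via Sobolev embeddings and the bootstrap assumptions \textbf{B7}.

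\textbf{Step 1 (Setup).} Using $i\partial_t u = -\Delta u - |u|^4 u$, write
\[
\frac{d}{dt}E_2(u) = 2\,\mathrm{Re}\!\int \Delta u\,\overline{\Delta\partial_t u}\,dx - \frac{d}{dt}\!\left(3\!\int|\nabla u|^2|u|^4 + 2\,\mathrm{Re}\!\int(\nabla u)^2 |u|^2\overline{u}^2\right)\!.
\]
Expanding $\Delta(|u|^4u)$ into the five products of $u,\nabla u,\Delta u$ and distributing, one identifies each contribution that would cancel exactly in the Euclidean setting, following the algebraic scheme of the $H^2$ pseudo-energy of \cite{RapSze2009}.

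\textbf{Step 2 (Cancellation + commutator terms).} On our rotationally symmetric surface, each integration by parts produces an extra term involving $\partial_r h$, and each commutator $[\Delta,\nabla]$ produces a factor of $h'/h$ or $(h'/h)'$ (essentially the Gaussian curvature). After regrouping, one obtains
\[
\frac{d}{dt}E_2(u) = R_{\text{geom}}(u) + R_{\text{sept}}(u),
\]
where $R_{\text{geom}}$ is a finite sum of integrals with integrand of the form $(h'/h)^{\alpha}(h''/h)^{\beta}\,u^{a}\overline{u}^{b}(\nabla u)^{c}(\overline{\nabla u})^{d}(\Delta u)^{e}(\overline{\Delta u})^{f}$, with total degree in $u$ equal to~$5$ and at most two $\Delta$-factors, and $R_{\text{sept}}$ is the septic remainder with integrand $u^{a}\overline{u}^{b}(\nabla u)^{c}(\overline{\nabla u})^{d}$ of total degree~$7$, one derivative per $\nabla$. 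Away from $r=1$, the quantity $h'/h$ and its derivatives are uniformly bounded (or, on $\mathbb{R}^2$, at most exponential in $r$, compensated by the support of $u$ after using \textbf{A7}--\textbf{A8}).

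\textbf{Step 3 (Estimate of the remainder).} Decomposing $u = \lambda^{-1/2}\tilde Q_b\!\left(\tfrac{r-r(t)}{\lambda}\right)e^{i\gamma}+\tilde u$, the concentrated profile satisfies $\|u\|_{L^\infty}\lesssim \lambda^{-1/2}$, $\|\nabla u\|_{L^2}\lesssim \lambda^{-1}$, $\|\Delta u\|_{L^2}\lesssim \lambda^{-2}$ on a ring of width $\lambda$ around $r=1$. Combining H\"older with the radial Sobolev embedding of Proposition~\ref{sobolevradial} (to get $L^\infty$-smallness off the singular curve) one checks that every term in $R_{\text{geom}}$ is bounded by $C\lambda^{-5}$ (one power of $\lambda$ being saved with respect to $E_2\sim\lambda^{-4}$ because one derivative is replaced by a bounded metric factor). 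Each term in $R_{\text{sept}}$ scales the same way by Gagliardo--Nirenberg. Outside $\{|r-1|\leq 1/2\}$, the bootstrap estimates
\[
\|u(t)\|_{H^{k/2}(|r-1|>1/2)} \leq \lambda(t)^{-(k-2+(5-k)\delta)},\qquad k=2,3,4,
\]
produce a loss of at most $\lambda^{-\delta/2}$ per derivative mismatch, which after combining all factors yields the announced loss $\lambda^{-3\delta/2}$.

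\textbf{Main obstacle.} The principal difficulty is Step~2: verifying that, on the curved surface, all the top-order quintic cancellations still take place up to geometric correction terms of strictly lower scaling. This requires careful bookkeeping of (i)~the many extra terms generated by $[\Delta_M,\nabla]$ and by integration by parts against $h(r)\,dr\,d\theta$, and (ii)~the use of the growth assumption (\ref{growth}) to absorb $h'/h$ into a constant on the essential region of integration, together with the $H^2$-smallness \textbf{A8} for the non-compact part of the support of $u$.
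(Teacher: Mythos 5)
Your overall strategy (differentiate $E_2$, substitute the equation, integrate by parts against $h(r)\,dr\,d\theta$, then close with radial Sobolev embeddings and \textbf{B7}) is in the right family, but Step 2 contains a genuine gap, and it is exactly the step you flag as the main obstacle. The paper sidesteps the difficulty entirely: it applies $\nabla$ to the equation, pairs the result with $\nabla\Delta\overline{u}+\nabla(|u|^4\overline{u})$, and uses only the self-adjointness of $\Delta=\mathrm{div}\circ\nabla$ and the Leibniz rule for $\Delta$ of a product --- no commutator $[\Delta,\nabla]$ is ever taken. Consequently the resulting identity for $\tfrac{d}{dt}E_2$ is \emph{exactly} the flat one, with no geometric correction terms whatsoever: your $R_{\mathrm{geom}}$, with its factors $(h'/h)^{\alpha}(h''/h)^{\beta}$, simply does not arise, and the "careful bookkeeping of curvature terms" you defer is not needed. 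Since you neither carry out that bookkeeping nor observe that it can be avoided, this step of your argument is not established.

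More seriously, your classification of the remainder misses the term that actually dictates the exponent in the lemma. The exact identity is
\[
\frac{d}{dt}E_2(u)=12\int|\nabla u|^2|u|^2\,\mathrm{Im}(\overline{u}\Delta u)+6\,\mathrm{Re}\!\int(\nabla u)^2\overline{u}^2u\,(i\Delta\overline{u}+i|u|^4\overline{u})-2\,\mathrm{Re}\!\int(\nabla u)^2\overline{u}^3(i\Delta u+iu|u|^4),
\]
so the remainder consists of degree-$6$ terms carrying a full Laplacian, bounded by $\int|\nabla u|^2|u|^3|\Delta u|$, together with degree-$10$ terms $\int|\nabla u|^2|u|^8$; there is no septic (degree-$7$) contribution. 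The degree-$6$ term is neither "quintic with metric factors" nor "septic with one derivative per $\nabla$", so it falls outside both of your categories. On the concentrated profile it is genuinely of size $\|\nabla u\|_{L^2}\|\Delta u\|_{L^2}\|\nabla u\|_{L^\infty}\|u\|_{L^\infty}^3\sim\lambda^{-(6+3\delta/2)}$, which is why the lemma is stated with that exponent; your claimed uniform bound $C\lambda^{-5}$ is false for this term. To repair the proof you need the exact identity above (derived without commutators), and then the two-region estimate: radial Sobolev embeddings plus $\|\Delta u\|_{L^2}\le\lambda^{-(2+\delta)}$ on $1/4<r<7/4$, and the two-dimensional embeddings with the bootstrap bounds \textbf{B7} on $|r-1|>3/4$, where one in fact gets the better bound $\lambda^{-(4+46\delta)}$.
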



\begin{proof}
We first apply the operator $\nabla$ to the equation (\ref{nls}). Then we multiply by $\nabla \Delta \overline{u} + \nabla ( |u|^4 \overline{u})$, integrate and take the imaginary part to obtain
\[
0= \mathrm{Re} \int \nabla u_t \nabla \Delta \overline{u} + \mathrm{Re} \int \nabla u_t \nabla (|u|^4 \overline{u})=A+B.
\]
We integrate by part the $A$ term:
\[
A= - \mathrm{Re} \int u_t \Delta^2 \overline{u} = -\frac{1}{2} \frac{d}{dt} \int | \Delta u|^2.
\]
Using the identity 
\[
\nabla(|u|^4 \overline{u})= \nabla ( \overline{u}^3 u^2)=3 \overline{u}^2 \nabla \overline{u} u^2 + 2 u \nabla u  \overline{u}^3,
\]
we split $B$ into two terms $B_1$ and $B_2$ with
\[
B_1=3 \mathrm{Re} \int \nabla u_t \nabla \overline{u} |u|^4,
\]
that we rewrite as
\[
B_1=\frac{3}{2} \left (  \frac{d}{dt} \int | \nabla u|^2 |u|^4 -\int |\nabla u|^2 \frac{d}{dt}|u|^4 \right ).
\]
Using the expression of $u_t$ in (\ref{nls}), we get $\partial_t |u|^4= - 4 |u|^2 \mathrm{Im} (\overline{u} \Delta u)$ and thus
\[
B_1=\frac{3}{2} \frac{d}{dt} \int | \nabla u |^2 |u|^4 + 6 \int | \nabla u|^2 |u|^2 \mathrm{Im} ( \overline{u} \Delta u) .
\] 
With the same method, we obtain for $B_2$,
\[
B_2= \frac{d}{dt} \mathrm{Re}  \int ( \nabla u)^2 |u|^2 \overline{u}^2 + 3\mathrm{Re} \int (\nabla u)^2  \overline{u}^2 u (i \Delta \overline{u} + i |u|^4\overline{u} ) - \mathrm{Re} \int (\nabla u)^2  \overline{u}^3( i \Delta u +i |u|^4 u).
\]
Summing $A, B_1$ and $B_2$, we get the identity:
\begin{eqnarray*}
\frac{d}{dt} E_2(u(t)) &=&12  \int | \nabla u|^2 |u|^2 \mathrm{Im} (\overline{u} \Delta u)+ 6 \mathrm{Re} \int ( \nabla u )^2 \overline{u}^2 u(i \Delta \overline{u} + i |u|^4 \overline{u}) \\
  & & - 2\mathrm{Re} \int (\nabla u)^2 \overline{u}^3 ( i \Delta u + i u |u|^4),
\end{eqnarray*}
so that we deduce
\begin{equation} \label{almost2}
\frac{d}{dt} E_2(u) \leq C \left (  \int | \nabla u|^2 |u|^3 | \Delta u| + \int | \nabla u |^2 |u|^8 \right ).
\end{equation}

 Let us now estimate the right hand side of (\ref{almost2}). 
 
\medskip

\textbf{Outside poles.} We start by the region outside poles where we may use radial Sobolev embeddings of Proposition \ref{sobolevradial} as in a one dimensional setting. First we prove
\begin{equation} \label{outside}
  \int_{\frac{1}{4} < r< \frac{7}{4}}  | \nabla u |^2 |u|^3  |\Delta u| + \int_{\frac{1}{4} < r< \frac{7}{4}}  | \nabla u |^2 |u|^8 \leq \frac{C}{\lambda ^{6+ \frac{3\delta }{2} }}.
\end{equation}
Indeed, using Cauchy-Schwarz, radial Sobolev inequality, we have successively for the first term
\begin{eqnarray*}
  \int_{\frac{1}{4} < r< \frac{7}{4}}  | \nabla u |^2 |u|^3  |\Delta u| & \leq & \| \nabla u\|_{L^2} \| \Delta u \|_{L^2} \| \nabla u \| _{L^{\infty}( \frac{1}{4} < r < \frac{7}{4}} )\| u\| ^3_{L^{\infty}( \frac{1}{4} < r < \frac{7}{4})}
 \\
                                                                           & \leq & \| \nabla u\|_{L^2} \| \Delta u \|_{L^2} \|  \Delta u \|_{L^2}^{\frac{1}{2}} \|  \nabla u \| _{L^2}^{\frac{1}{2}} \| u \|^{\frac{3}{2}} _{L^2}   \| \nabla u \|^{\frac{3}{2}} _{L^2}.
\end{eqnarray*}
Using mass conservation, control of the $H^1$ norm of the solution $\| \nabla u \|_{L^2} \leq \lambda ^{-1}$ and the bootstrap assumption (\ref{assumption2}) on the $H^2$ norm, we get
\[
 \int_{\frac{1}{4} < r< \frac{7}{4}} | \nabla u |^2 |u|^3 | \Delta u|  \leq  \frac{1}{\lambda ^{ 6 +  \frac{3\delta }{2}}} .
\]
The second term is treated in the same way; we find
\[
 \int_{\frac{1}{4} < r< \frac{7}{4}}  | \nabla u |^2 |u|^8 \leq \| \nabla u\|_{L^2}^2 \| u\|_{L^2}^4  \|\nabla u \|_{L^2}^4 \leq  \frac{1}{ \lambda ^{6 }}.
\]
Summing the two above estimate, we obtain (\ref{outside}). 

\medskip

\textbf{Near poles.} We now prove that near poles, the estimate is better:
\begin{equation} \label{near}
  \int_{|r-1| > \frac{3}{4}} | \nabla u |^2 |u|^3  |\Delta u| + \int_{|r-1| > \frac{3}{4}}   | \nabla u |^2 |u|^8 \leq \frac{C}{\lambda ^{4+ 46 \delta }}.
\end{equation}
By Cauchy-Schwarz, 2D Sobolev embeddings $H^{1+ \delta} \hookrightarrow L^{\infty}$, $H^{\frac{1}{2} } \hookrightarrow L^4$ and interpolation between (\ref{assumption1})  and (\ref{assumption2}), the left hand side in (\ref{near}) is bounded by
\begin{eqnarray*}
 & &  \| \nabla u \|_{L^4(|r-1| > \frac{3}{4})}^2 \|\Delta u \|_{L^2} \| u\|^3_{L^{\infty}(|r-1| >\frac{3}{4})} + \| \nabla u \|_{L^{2}(|r-1| > \frac{3}{4}) }^2 \| u\|_{L^{\infty}(|r-1| > \frac{3}{4})}^8 \\
 & \leq & \frac{1}{ \lambda ^{2+ \delta}} \| u\|_{H^{\frac{3}{2}} (|r-1| > \frac{3}{4})}^2 \| u\| _{H^{1+ \delta}(|r-1| > \frac{3}{4})}^3 + \frac{1}{\lambda ^{6 \delta}} \| u\|_{H^{1+ \delta}(|r-1| > \frac{3}{4})}^8 \\
  & \leq & \frac{1}{\lambda ^{2+ \delta}} \frac{1}{\lambda ^{2+ 4 \delta}} \left ( \frac{1}{\lambda ^{3 \delta (1- \delta)}}    \frac{1}{\lambda^{\delta (2+ \delta)}} \right ) ^3 +\frac{1}{\lambda ^{6 \delta} } \left ( \frac{1}{\lambda ^{3 \delta (1- \delta)}}    \frac{1}{\lambda^{\delta (2+ \delta)}} \right )^8 \\
  & \leq & \frac{1}{ \lambda ^{4 + 46 \delta}}.
\end{eqnarray*}
Summing (\ref{outside}) and (\ref{near}), we obtain (\ref{almost}).
\end{proof}

Now we prove that the $E_2$ pseudo energy is essentially the square of the $H^2$ norm. Indeed, first near poles, again by interpolation
\begin{equation*}
\int_{|r-1| > \frac{3}{4}} | \nabla u |^2 |u|^4 \leq \| \nabla u \|_{L^2({|r-1| > \frac{3}{4}})}^2 \| u \|_{H^{1+\delta}({|r-1| > \frac{3}{4}} )}^4 \leq \frac{1}{\lambda^{C \delta}},
\end{equation*}
for some $C>0$. 
In the area outside poles but including the blow up curve, we have from the radial Sobolev embeddings 
\begin{equation}
\int_{\frac{1}{4} < r < \frac{7}{4}} | \nabla u |^2 |u|^4 \leq \| \nabla u \|_{L^2(\frac{1}{4} < r < \frac{7}{4})}^4 \| u\|_{L^2(\frac{1}{4} < r < \frac{7}{4})}^2  \leq \frac{1}{ \lambda ^4}.
\end{equation}
Therefore, summing the two above inequalities and from (\ref{almost}):
\begin{equation} \label{estimateH2}
\| u\| _{{\dot H}^2}^2 \leq \int | \Delta u |^2 \leq |E_2(u)|+ \frac{1}{ \lambda ^{4}} \leq |E_2(u_0)|+ \int_0^t \frac{d \tau}{ \lambda ( \tau)^{6+ \frac{3\delta}{2}}} + \frac{1}{ \lambda ^{4}}
\end{equation} 
The energy at time $t=0$ is controlled using the previous estimates on $E_2$, $\textbf{A7}$ and the monotonicity of $\lambda$ (\ref{monotonicity}):
\[
|E_2(u_0) | \leq \int | \Delta u_0 |^2 + \frac{1}{\lambda_0^4} \leq \frac{2}{{\lambda_0}^4} \leq \frac{C}{ \lambda ^4}.
\]
To treat the $L^1$ norm in time in the right hand side of (\ref{estimateH2}), we will need the following lemma which basically say that when integrating, $\lambda ^{-1}$ behaves like the self similar rate.

\begin{lem} \label{integrale}
Let $\alpha >0$. Then for all $t \in [0, t_1)$,
\begin{equation}
\int_0^{t} \frac{d \tau}{ {\lambda (\tau)}^{\alpha}} \leq \left \{
\begin{array}{ccl}
\alpha^{\ast} &\textrm{ if }& \alpha <2  \\
\frac{ | \log \lambda |^{14}}{\lambda ^{\alpha -2}} & \textrm{ if } &\alpha  \geq 2 \\
\end{array}
\right. .
\end{equation}
\end{lem}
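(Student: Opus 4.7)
The plan is to change the integration variable from $t$ to $s$ using the relation $ds = dt/\lambda^{2}(t)$ coming from (\ref{s0}), which turns the integral into
\[
\int_0^t \frac{d \tau}{\lambda(\tau)^{\alpha}} \;=\; \int_{s_0}^{s(t)} \lambda(\sigma)^{2-\alpha} \, d\sigma,
\]
and then to split the analysis according to the sign of the exponent $2-\alpha$.

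In the subcritical regime $\alpha < 2$, the integrand is a positive power of $\lambda$ and is therefore small. Using the upper bound (\ref{estimatelambda}), namely $\lambda(\sigma) \leq \sqrt{\lambda_0}\, \exp(-\pi \sigma/(2\log \sigma))$, I would estimate
\[
\int_{s_0}^{s(t)} \lambda(\sigma)^{2-\alpha} d\sigma \;\leq\; \lambda_0^{(2-\alpha)/2} \int_{s_0}^{\infty} \exp\!\Bigl(-\tfrac{(2-\alpha)\pi}{2}\tfrac{\sigma}{\log \sigma}\Bigr) d\sigma.
\]
The last integral converges (being dominated by $\int e^{-c\sqrt{\sigma}}$ for large $\sigma$), so the right-hand side reduces to $C_\alpha \, \lambda_0^{(2-\alpha)/2}$. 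Since $\lambda_0$ can be taken arbitrarily small by choosing $\alpha^{\ast}$ small in \textbf{A5}, this is bounded by $\alpha^{\ast}$.

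In the supercritical regime $\alpha \geq 2$, the integrand now blows up as $\sigma \to s$. The idea is to combine the almost-monotonicity (\ref{monotonicity}) of $\lambda$ with a polynomial-in-$|\log \lambda|$ control of the elapsed $s$-time. Applying (\ref{monotonicity}) with $s_{2}=\sigma \leq s = s_{3}$ gives $\lambda(\sigma) \geq \lambda(s)/3$, hence
\[
\int_{s_0}^{s(t)} \frac{d\sigma}{\lambda(\sigma)^{\alpha-2}} \;\leq\; \frac{3^{\alpha-2}\, (s(t)-s_0)}{\lambda(s(t))^{\alpha-2}} \;\leq\; \frac{3^{\alpha-2}\, s(t)}{\lambda(t)^{\alpha-2}}.
\]
It then remains to show $s(t) \leq |\log \lambda(t)|^{14}$. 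Inverting (\ref{boundonlambda1}), which reads $|\log \lambda(s)| \geq \frac{\pi}{2}\, s/\log s - \tfrac{1}{2}|\log \lambda_{0}|$, I would obtain for $s$ large that $s/\log s \leq C |\log \lambda(s)|$, and one iteration of the logarithm yields $s \leq C\, |\log \lambda|\, \log|\log \lambda|$, which is comfortably dominated by $|\log \lambda|^{14}$ for $\lambda$ small.

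The only delicate step is this last inversion, since the rest reduces to direct integration; but the exponent $14$ is far from sharp (even $|\log \lambda|^{2}$ would suffice), so there is ample room and the argument is essentially elementary once (\ref{estimatelambda}) and (\ref{monotonicity}) are in hand.
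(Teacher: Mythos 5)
Your proposal is correct and follows essentially the same route as the paper: the change of variables $dt=\lambda^{2}\,ds$, then for $\alpha<2$ an upper bound on $\lambda$ in terms of $s$ giving a convergent integral times a small power of $\lambda_0$, and for $\alpha\geq 2$ the almost-monotonicity (\ref{monotonicity}) combined with the control $s\leq |\log\lambda|^{14}$. The only cosmetic difference is that you invert (\ref{boundonlambda1}) directly, whereas the paper uses the cruder consequence $\lambda\leq\exp(-s^{3/40})$ of \textbf{B5} and (\ref{upperboundb}); both yield the required polynomial-in-$|\log\lambda|$ bound on $s$.
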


\begin{proof}
If $\alpha <2$, we use the uniform smallness in $\alpha^{\ast}$ of $\lambda$, $\lambda \leq \mathrm{exp} (- 1/\alpha^{\ast})$ from \textbf{B5}, \textbf{B2} and the smallness in time from \textbf{B5} and (\ref{upperboundb}), $\lambda \leq \mathrm{exp}(- s ^{\frac{3}{40}})$ and this gives:
\[
\int_{0}^t \frac{d \tau}{\lambda ^{\alpha}} = \int_{s_0}^s \lambda^{2- \alpha} d \tau \leq e^{- \frac{2-\alpha}{2 \alpha^{\ast}}}  \int_{s_0}^s e^{- \frac{2- \alpha }{2} s^{\frac{3}{40}}} \leq \alpha^{\ast}.
\]
If $\alpha \geq 2$, we use the monotonicity of $\lambda$ (\ref{monotonicity}) and  $\lambda \leq \exp ( - s ^{\frac{3}{40}})$ to write
\[
\int_{0}^t \frac{d \tau}{\lambda ^{\alpha}} = \int_{s_0}^s \frac{d \tau }{  \lambda^{\alpha-2}}  \leq C \frac{s-s_0}{\lambda^{\alpha-2}} \leq \frac{| \log \lambda |^{14}}{\lambda ^{\alpha - 2 }}.
\]
This concludes the proof of the Lemma.
\end{proof} 

Thus, Lemma \ref{integrale} allows us to write:
\[
\| u\|_{\dot{H}^2} ^2\leq \frac{C}{\lambda ^{4}}+ \frac{ | \log \lambda |^{14}} {\lambda ^{4 + \frac{3 \delta}{2}}} + \frac{1}{\lambda ^4} \leq \frac{1}{2 \lambda ^{4+ 2 \delta}}.
\]
This proves (\ref{conclusion2}). 

\medskip

Now we prove (\ref{conclusion1/2}), (\ref{conclusion1}), \ref{conclusion3/2}). These estimates rely on the following lemma.
\begin{lem} \label{interior}
Let $0 <a_2 < a_1 < b_1< b_2$ and $\chi$ a positive smooth cut-off function such that $\chi=1$ on $[a_1,b_1]$ and $\chi=0$ outside $[a_2,b_2]$. Let $v=\chi u$ and $s \in (0,\frac{3}{2}]$. Then there exists a constant $C>0$ such that for all $t \in [0,t_1)$,
\begin{equation}
\| D^s v \|_{L^{\infty}_{[0,t)} L^2} \leq C \left (\| D ^s v(0) \|_{L^2} + \|  u \|_{L^2_{[0,t)} H^{\mathrm{max}(1,s+ \frac{1}{2})}(a_2,b_2)}+ \|D^s (v  |u|^4) \|_{{L^1_{[0,t)}} L^2 }\right ).
\end{equation}
where $D^s$ is the operator  $D^s=(I- \Delta)^{s/2}$.
\end{lem}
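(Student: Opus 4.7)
The function $v = \chi u$ satisfies
\[ i \partial_t v + \Delta v = -v |u|^4 + [\Delta, \chi] u, \]
where the commutator $[\Delta, \chi] u = 2 \nabla \chi \cdot \nabla u + (\Delta \chi) u$ is a first-order differential operator in $u$ whose coefficients are supported in the compact annular region $K = \{a_2 \leq r \leq a_1\} \cup \{b_1 \leq r \leq b_2\} \subset (0, \rho)$, bounded away from the poles. My plan is to start from Duhamel's formula
\[ v(t) = e^{it\Delta} v(0) + i \int_0^t e^{i(t-\tau) \Delta} \bigl( v |u|^4 - [\Delta, \chi] u \bigr) d\tau, \]
apply $D^s = (I-\Delta)^{s/2}$, and estimate the $L^\infty_{[0,t)} L^2$ norm of the three resulting terms separately.

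Two of the three contributions are essentially routine. The free-flow term $e^{it\Delta} D^s v(0)$ is bounded by $\|D^s v(0)\|_{L^2}$ via unitarity of $e^{it\Delta}$ on $L^2$, and the nonlinear source $v|u|^4$ is handled by Minkowski's inequality, producing the announced $\|D^s(v|u|^4)\|_{L^1_{[0,t)} L^2}$ contribution. The entire content of the lemma is therefore the estimate on the commutator piece $\int_0^t e^{i(t-\tau)\Delta} D^s [\Delta, \chi] u \, d\tau$.

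For that term, the naive approach (putting $D^s [\Delta, \chi] u$ into $L^1_t L^2$) would cost $s+1$ derivatives on $u$ over $K$, whereas the lemma claims only $\max(1, s+1/2)$. The missing half-derivative of gain will be provided by the inhomogeneous Kato local smoothing estimate: for $g$ supported in $K$,
\[ \left\| \int_0^t e^{i(t-\tau)\Delta} g \, d\tau \right\|_{L^\infty_{[0,t)} L^2} \lesssim \| D^{-1/2} g \|_{L^2_{[0,t)} L^2}. \]
Applied to $g = D^s [\Delta, \chi] u$, and using that $\nabla \chi$ and $\Delta \chi$ localize the output to $K$, this bounds the commutator piece by $\|D^{s-1/2} [\Delta, \chi] u\|_{L^2_{[0,t)} L^2}$. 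For $s \geq 1/2$, commuting $D^{s-1/2}$ past the smooth cut-off coefficients and absorbing the lower-order commutator remainders yields $\|u\|_{L^2_t H^{s+1/2}(a_2, b_2)}$. For $s < 1/2$, $D^{s-1/2}$ is already bounded on $L^2$ and the cruder estimate $\|[\Delta, \chi] u\|_{L^2_t L^2} \lesssim \|u\|_{L^2_t H^1(a_2, b_2)}$ suffices. Either way one recovers exactly the announced index $\max(1, s+1/2)$.

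The main obstacle will be establishing the inhomogeneous local smoothing estimate in this geometric setting. Exploiting radial symmetry, one can pass to the 1D equation on $(0, \rho)$ and conjugate by $h^{1/2}$ to remove the first-order term $(h'/h)\partial_r$ in the radial Laplacian, exactly as in the proof of Proposition \ref{sobolevradial}; on the compact piece $K$ where $h$ is bounded below, the resulting operator is a 1D Schr\"odinger with smooth potential, for which the Kato smoothing $|D|^{1/2} e^{it \partial_r^2}: L^2 \to L^2_t L^2_{loc}$ is classical, and the potential is treated as a perturbation through a Duhamel bootstrap. In the compact case $M=S^2$ one may alternatively derive the estimate semiclassically via the Burq-G\'erard-Tzvetkov framework \cite{BurGerTzv2004} already invoked earlier in the paper. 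Once this smoothing is in place the remainder of the argument is a clean Duhamel estimate.
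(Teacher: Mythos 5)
Your Duhamel skeleton and your treatment of the free and nonlinear terms are fine, but the step that carries the whole weight of the argument --- the inhomogeneous Kato local smoothing estimate with a gain of half a derivative --- is a genuine gap: that estimate is \emph{false} when $M$ is compact, and $M=S^2$ is one of the two model cases the theorem must cover. On a compact surface the Laplacian has discrete spectrum; taking data equal to a radial eigenfunction $e_k$ (a normalized Legendre polynomial on $S^2$), whose mass on any fixed annulus $K$ away from the poles is comparable to $1$, one finds $\| \chi_K |D|^{1/2} e^{it\Delta} e_k \|_{L^2([0,T])L^2} \sim T^{1/2} k^{1/2} \to \infty$, and by duality and the Christ--Kiselev lemma the retarded inhomogeneous estimate fails as well. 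The paper flags exactly this obstruction at the opening of Section \ref{sectionnorm} (``if $a<\infty$, the manifold is compact and this regularization is false because of lack of dispersion due to the discrete spectrum''), and the whole $H^2$ pseudo-energy machinery of that section exists precisely to avoid invoking local smoothing. Your proposed rescue --- conjugating by $h^{1/2}$ to reduce to a one dimensional Schr\"odinger operator ``with smooth potential, for which Kato smoothing is classical'' --- does not repair this: for $S^2$ the reduced problem lives on the bounded interval $(0,\pi)$, still has discrete spectrum, and the resulting potential is singular at the poles where $h$ vanishes, whereas classical one dimensional Kato smoothing is a statement on the whole line.

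The intended proof is far more elementary and uses no dispersion at all. Work directly with the energy identity
\[
\frac{1}{2}\frac{d}{dt}\|D^s v\|_{L^2}^2=\mathrm{Im}\int D^s\overline{v}\; D^s\bigl(u\Delta\chi+2\nabla\chi\cdot\nabla u - v|u|^4\bigr),
\]
and for the dangerous gradient term move half a derivative onto $v$ instead of trying to gain it from the flow:
\[
\Bigl|\int D^s\overline{v}\,D^s(\nabla\chi\cdot\nabla u)\Bigr|\leq \|D^{s+\frac{1}{2}}v\|_{L^2}\,\|D^{s-\frac{1}{2}}(\nabla\chi\cdot\nabla u)\|_{L^2}\leq C\|u\|_{H^{s+\frac{1}{2}}(a_2,b_2)}^2 ,
\]
where the last step uses that $v=\chi u$ is itself supported in $(a_2,b_2)$, so $\|D^{s+\frac{1}{2}}v\|_{L^2}\leq C\|u\|_{H^{s+\frac{1}{2}}(a_2,b_2)}$; for $s<\frac{1}{2}$ one simply bounds both factors by $\|u\|_{H^1(a_2,b_2)}$, which is where the exponent $\max(1,s+\frac{1}{2})$ comes from. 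Integrating in time and absorbing the term $\|D^sv\|_{L^\infty L^2}\|D^s(v|u|^4)\|_{L^1 L^2}$ by Young's inequality then gives the lemma. The half derivative you were trying to extract from dispersion is in fact supplied for free by the spatial localization of $v$.
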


\begin{proof}
The function $v$ satisfies the equation
\[
i \partial_t v + \Delta v -u \Delta \chi - 2 \nabla u \cdot \nabla \chi = - v  |u|^4
\]
so that 
\[
\frac{1}{2} \frac{d}{dt} \| D^s v \|_{L^2}^2 =  \mathrm{Re} \int D^s \overline{v}  D^s v_t =\mathrm{Im} \int D^s  \overline{v} \ D^s (u \Delta \chi + 2 \nabla u \cdot \nabla \chi -v |u|^4 ) .
\] 
For the first two terms, if $s \geq \frac{1}{2}$, we write using an integration by parts in the second one and distributing the derivatives
\begin{eqnarray*}
\left |\int D^s  \overline{v} \ D^s (u \Delta \chi + 2 \nabla u \cdot \nabla \chi) \right | &\leq &\| D^s v \|_{L^2} \|  u \|_{H^s(a_2,b_2)} + \| D^{s+ \frac{1}{2}} v \|_{L^2} \|  u \|_{H^{s+ \frac{1}{2}}(a_2,b_2)} \\   & \leq & \|  u\|_{H^{s+\frac{1}{2}} (a_2,b_2)} ^2. 
\end{eqnarray*}
If $s < \frac{1}{2}$, we directly write
\begin{eqnarray*}
\left |\int D^s  \overline{v} \ D^s (u \Delta \chi + 2 \nabla u \cdot \nabla \chi) \right | &\leq& \| D^s v \|_{L^2} \|  u \|_{H^s(a_2,b_2)} +  \| D^s v\| _{L^2} \|  u\|_{H^{s+ \frac{1}{2}}(a_2,b_2)} \\
      & \leq & \| u\|_{H^1(a_2,b_2)}^2.
\end{eqnarray*}
For the third term, we have easily
\begin{eqnarray*}
\left | \int D^s \overline{v} D^s( v |u|^4) \right | & \leq  & \| D^s v \|_{L^2} \| D^s ( v |u|^4)\|_{L^2} \\ 
\end{eqnarray*}
Finally, summing the above estimates, we have
\[
 \frac{1}{2} \frac{d}{dt} \|D^s v \|_{L^2}^2 \leq  \|  u\|_{ H ^{\mathrm{max}(1, s + \frac{1}{2})}(a_2,b_2)}^2 + \| D^s v \|_{L^2} \| D^s (v |u|^4 )\|_{L^2}. 
\]
We integrate the above estimate and use a basic inequality for $a,b \geq 0$ and $\var>0$,
\[
ab \leq \var a^2 + \frac{1}{\var} b^2,
\] 
to get,
\[ \| D^s v\|_{L^{\infty} L^2} ^2  \leq \| D^s v (0)\|_{ L^2}^2 +
 \|  u\|_{L^2 H^{\mathrm{max}(1, s + \frac{1}{2})}(a_2,b_2)}^2  + \var  \| D^s v \|_{L^{\infty} L^2}^2 + \frac{1}{\var} \| D^s ( v |u|^4) \|_{L^1 L^2}^2.
\]
The result follows if we choose $\var$ small enough.

\end{proof}

\textbf{Bootstrap outside poles.} Here we prove the estimates (\ref{conclusion1/2}), (\ref{conclusion1}), (\ref{conclusion3/2}) in the regions $1/4 \leq r \leq 1/2$ and $3/2 \leq r \leq 7/4$. 

\medskip

\textbf{$H^{\frac{1}{2}}$ estimate.} To prove the $H^{\frac{1}{2}}$ smallness outside poles, we first prove the $H^{\sigma}$ smallness for all $\sigma <\frac{1}{2}$. We begin by the area $1/4 <r < 1/2$, the other region will be treated in the same way. We apply Lemma \ref{interior} with $s=\sigma$, $[a_1,b_1]=[\frac{4}{32}, \frac{28}{32}]$ and $[a_2,b_2]=[\frac{3}{32}, \frac{29}{32}]$ and $\chi_1=1$ on $[a_1,a_2]$, $\chi_1=0$ outside $[a_2,b_2]$ and $v_1=\chi_1 u$:
\[
\| D ^{\sigma} v_1 \|_{L^{\infty}L^2} \leq \| D^{\sigma} v_1(0) \|_{L^2} + \|  u \|_{L^2 H^1(\frac{3}{32}, \frac{29}{32})} + \| D^{\sigma} (v_1 |u|^4)\|_{L^1L^2}.
\]
For the linear term, we use the $L^2 H^1$ smallness of $\tilde{u}$ outside the blow up curve to get from Lemma \ref{smallnessL2H1}:
\[
\|u\|_{L^2 H^1(\frac{3}{32}, \frac{29}{32})} \leq \| \chi \tilde{u} \|_{L^2H^1} \leq C (\alpha^{\ast})^{\frac{1}{2}}.
\]
For the nonlinear term, we use fractional product rule to distrubute the $\sigma$-derivative on the product and H\"older inequality with exponents $(p,q)$ such that
\[
p=\frac{2}{1- 2 \sigma}, \qquad q=\frac{1}{\sigma}, \qquad \frac{1}{p}+ \frac{1}{q}=\frac{1}{2}.
\]
Let $\tilde{\chi}$ be a cut-off such that $\tilde{\chi}=1$ on $[3/32,29/32]$ and $\tilde{\chi}=0$ outside $[2/32,30/32]$. Then using $\tilde{\chi}^4 \chi_1=\chi_1$, we obtain
\begin{eqnarray*}
\| D^{\sigma} (v_1 |u|^4) \|_{L^2} & \leq &\| D^{\sigma} (v_1 |\tilde{\chi} \tilde{u}
|^4) \|_{L^2} \leq  \| D^{\sigma} v_1 \|_{L^2}  \|\tilde{\chi} \tilde{u} \|_{L^{\infty}}^4 + \| \tilde{\chi} \tilde{u}\|_{L^{\infty}}^3 \| D^{\sigma}  (\tilde{\chi} \tilde{u}) \|_{L^q}  \| v_1 \|_{L^p}.\\
\end{eqnarray*}
Next, we apply the radial Sobolev inequality
\[
\| \tilde{\chi} \tilde{u} \|_{L^{\infty}} \leq  \| \tilde{\chi} \tilde{u}\|_{L^2} ^{\frac{1}{2}} \| \nabla ( \tilde{\chi} \tilde{u}) \|_{L^2}^{\frac{1}{2}} \leq C\|  \tilde{u} \|_{H^1(\frac{2}{32},\frac{30}{32})}^{\frac{1}{2}} ,
\]
and the embeddings $\dot{H}^{\frac{1}{2}-\sigma} \hookrightarrow L^q$,  $\dot{H}^{\sigma} \hookrightarrow L^p$ which are allowed since $\sigma < \frac{1}{2}$ and this gives 
\begin{eqnarray*}
\| D^{\sigma} (v_1 |u|^4) \|_{L^2} & \leq & \| D^{\sigma} v_1 \|_{L^2}  \| \tilde{u} \|_{H^1(\frac{2}{32}, \frac{30}{32})}^2 + \| \tilde{u}\|_{H^1(\frac{2}{32}, \frac{30}{32})}^{\frac{3}{2}} \| D^{\frac{1}{2}} (\tilde{\chi}\tilde{u}) \|_{L^2}  \| D^{\sigma} v_1 \|_{L^2}  \\
    &\leq &\| D^{\sigma} v_1 \|_{L^2} \|\tilde{u} \|_{H^1(\frac{2}{32}, \frac{30}{32})}^2.\\
\end{eqnarray*}
Finally, we obtain the Gronwall type inequality
\begin{equation} \label{gronwall}
\| D^{\sigma} v_1 \|_{L^{\infty} L^2} \leq \| D^{\sigma} v_1 (0)\|_{L^2} + \|  \tilde{u} \|_{L^2 H^1(\frac{2}{32}, \frac{30}{32})} + \int_0^{t_1} \| \tilde{u} \|_{H^1(\frac{2}{32}, \frac{30}{32})}^2 \| D^{\sigma} v_1 \|_{L^2},
\end{equation}
that we may rewrite as
\[
\| D^{\sigma} v_1 \|_{L^{\infty} L^2} \leq \| D^{\sigma} v_1 (0)\|_{L^2} + \| \tilde{u} \|_{L^2 H^1(\frac{2}{32}, \frac{30}{32})} + \| D^{\sigma}v_1 \|_{L^{\infty} L^2} \|\tilde{u} \|_{L^2 H^1(\frac{2}{32}, \frac{30}{32})}^2.
\]
Since $\| \tilde{u} \|_{L^2 H^1(2/32,30/32)} \leq C (\alpha^{\ast})^{1/2}$ again from Lemma \ref{smallnessL2H1}, we deduce from \textbf{A8}:
\[
\| D^{\sigma} v_1 \|_{L^{\infty} L^2} \leq C \left ( \| D^{\sigma} v_1 (0)\|_{ L^2}+\|  \tilde{u} \|_{L^2 H^1(\frac{2}{32}, \frac{30}{32})}\right ) \leq C,
\]
Now, we can prove the $H^{\frac{1}{2}}$-smallness of $u$ outside the blow up curve. We repeat the proof of the $H^{\sigma}$ smallness for $\sigma<1/2$ but we treat the nonlinear term differently. We reduce the support of $\chi_1$ by introducing a smooth cut-off $\chi_2$ with $0 \leq \chi_2 \leq 1$ and 
\[
\chi_2 =
\left \{
\begin{array}{rcl}
1 &\textrm{on} &  [\frac{5}{32}, \frac{27}{32}] \\
0 & \textrm{outside} & [ \frac{4}{32}, \frac{28}{32}]
\end{array}
\right. .
\]
Let $v_2= \chi_2 u$, then we apply Lemma \ref{interior} with $\sigma=1/2$,
\[
\| D^{\frac{1}{2}} v_2 \|_{L^{\infty} L^2} \leq \| D^{\frac{1}{2}} v_2(0) \|_{L^2} + \| u\|_{L^2 H^1(\frac{4}{32}, \frac{28}{32})} + \| D^{\frac{1}{2}} (v_2 |u|^4) \|_{L^1 L^2}.
\]
The nonlinear term is again controled with H\"older inequality with exponents $p=q=4$ and radial Sobolev embeddings $H^{5/8}  \hookrightarrow L^{\infty}$ and $\dot{H}^{1/4} \hookrightarrow L^4$. Note that $\chi_2 \chi_1^4 = \chi_2$ so that
\begin{eqnarray*}
\| D^{\frac{1}{2}} (v_2 |u|^4) \|_{ L^2} &\leq & \| D^{\frac{1}{2}} (v_2  |\chi_1 u|^4) \|_{ L^2} \leq \| D^{\frac{1}{2}} v_2 \|_{L^2} \| \chi_1 u\|_{L^{\infty}}^4 + \| \chi_1 u\|_{L^{\infty}}^3 \| v_2\|_{L^4} \| D^{\frac{1}{2}} (\chi_1 u) \| _{L^4} \\
                                       & \leq & \| D^{\frac{1}{2}} v_2 \|_{L^2} \|  \tilde{u}\|_{H^1(\frac{3}{32}, \frac{29}{32})}^2 + \|  \chi_1 u \|_{H^{\frac{5}{8}}}^{ 3} \|D^{\frac{1}{4}} v_2 \|_{L^2} \| D^{\frac{3}{4} } ( \chi_1 u ) \|_{L^2}.
\end{eqnarray*}
By interpolation and using the smallness of the $H^{\sigma}$ norm of $v_1=\chi_1 u$ for $\sigma =1/8,3/8$, we have
\[
\| \chi_1 u \|_{H^{\frac{5}{8}} } ^3 \| D^{\frac{3}{4} } (\chi_1 u)  \|_{L^2} \leq \left (\| \chi_1  u \|_{H^{\frac{1}{8}}}^{\frac{8}{15}} \| \chi_1 u \|_{H^1}^{\frac{7}{15}} \right )^3  \| D ^{\frac{3}{8}} (\chi_1 u ) \|_{L^2}^{\frac{2}{5}}  \| \nabla (\chi_1 u ) \|_{L^2}^{\frac{3}{5}} \leq C \|  \tilde{u} \|_{H^1(\frac{3}{32}, \frac{29}{32})}^{2} ,
\]
so that the nonlinear term is now estimated by
\[
\| D^{\frac{1}{2}} (v_2 |u|^4) \|_{ L^2} \leq \| D^{\frac{1}{2}} v_2 \|_{L^2}   \|  \tilde{u} \|_{H^1(\frac{3}{32}, \frac{29}{32})} ^2.
\]
Thus, we obtain the same type of estimate as (\ref{gronwall}):
\[
\| D^{\frac{1}{2}} v_2 \|_{L^{\infty} L^2} \leq    \| D^{\frac{1}{2}} v_2 (0)\|_{ L^2}+\| \tilde{u} \|_{L^2 H^1(\frac{3}{32}, \frac{29}{32})}+ \int_0^{t_1} \|  \tilde{u} \|_{L^2 H^1(\frac{3}{32}, \frac{29}{32})}  \| D^{\frac{1}{2}} v_2 \|_{L^2},
\]
that we solve identically to find
\[
\| D^{\frac{1}{2}} v_2 \|_{L^2} \leq \| D^{\frac{1}{2}} v_2(0) \|_{L^2} + \| \nabla \tilde{u} \|_{L^2H^1(\frac{3}{32}, \frac{29}{32})} \leq (\alpha^{\ast})^{\frac{1}{5}}.
\]
In particular, this proves the bootstrap conclusion (\ref{conclusion1/2})  in $[\frac{1}{4}, \frac{1}{2}]$. Using the same treatment, the same estimate still holds in the region $[\frac{3}{2}, \frac{7}{4}]$. 

\medskip

 \textbf{$H^{\frac{3}{2}}$ estimate.} We reduce again the support of the cut-off functions with
\[
\chi_3 =
\left \{
\begin{array}{rcl}
1 &\textrm{on} &  [\frac{6}{32}, \frac{26}{32}] \\
0 & \textrm{outside} & [ \frac{5}{32}, \frac{27}{32}]
\end{array}
\right. .
\]
and we go back to Lemma \ref{interior} to estimate the $H^{\frac{3}{2}}$ norm of $v_3=\chi_3 u$:
\[
\| D^{\frac{3}{2}} v_3 \|_{L^{\infty} L^2} \leq  \| D^{\frac{3}{2}} v_3 (0) \|_{L^2} + \| D^2 u \|_{L^2 L^2} + \| D^{\frac{3}{2}} (v_3 |u|^4)\|_{L^1 L^2}.
\]
The linear term is known by (\ref{conclusion2}) and Lemma \ref{integrale}: 
\[
 \| D^2 u \|_{L^2L^2}\leq \left ( \int _0^{t_1} \frac{1}{\lambda ^{4 + 2 \delta}} \right )^{\frac{1}{2}} \leq  \frac{ |\log \lambda |^{7}}{\lambda ^{1+ \delta}}.
\]
For the nonlinear term, we write by distributing the derivative and since $\chi_2 ^4 \chi_3=\chi_3$,
\[
\| D^{\frac{3}{2}} (v_3 |u|^4) \|_{ L^2}  \leq \|   \chi_2 u \|_{H^\frac{3}{2}} \|\chi_2 u\|_{L^{\infty}}^4.
\]
The $L^{\infty}$-norm is bounded from the $H^{\frac{1}{2}}$ smallness of $v_2 =\chi_2 u$ proved before. Indeed, by interpolation
\begin{equation} \label{interpolation}
\|\chi_2 u\|_{L^{\infty}} \leq \| \chi_2 u\|_{H^{\frac{1}{2}+ \frac{\delta}{2}}} \leq \| \chi_2 u\| _{H^{\frac{1}{2}} }^{1- \delta} \| \nabla (\chi_2  u) \|_{L^2}^{\delta} \leq \frac{1}{ \lambda ^\delta}.
\end{equation}
Moreover, 
\[
\| \chi_2 u \|_{H^{\frac{3}{2}}} \leq \|  u \|_{H^1}^{\frac{1}{2}} \| u \|_{H^2}^{\frac{1}{2}} \leq \frac{1}{\lambda ^{\frac{3}{2}+ \frac{\delta}{2}}}.
\]
Therefore, we can conclude
\[
\| D^{\frac{3}{2}} v_3\|_{L^{\infty} L^2} \leq \| D^{\frac{3}{2}} v_3(0) \|_{L^2} + \frac{| \log \lambda |^{7}}{\lambda ^{1+ \delta}} +\int_0^{t_1} \frac{1}{ \lambda ^{\frac{3}{2}+ \frac{3 \delta}{2}}} \leq \frac{1}{2 \lambda ^{1+ 2 \delta}},
\]
and this proves the bootstrap conclusion (\ref{conclusion3/2}) in $[1/4, 1/2]$ and as before the same treatment gives the result in $[3/2,7/4]$. 

\medskip

 \textbf{$H^{1}$ estimate.} Let $\chi_4$ be a positive and smooth cut-off satisfying 
\[
\chi_4 =
\left \{
\begin{array}{rcl}
1 &\textrm{on} &  [\frac{7}{32}, \frac{25}{32}] \\
0 & \textrm{outside} & [ \frac{6}{32}, \frac{26}{32}]
\end{array}
\right. 
\]
and $v_4=u \chi_4$. Again with Lemma \ref{interior} , we have
\[
\| D v_4 \|_{L^{\infty} L^2} \leq \| D v_4(0)\|_{L^2} + \| u\|_{L^2 H^{\frac{3}{2}}(\frac{6}{32}, \frac{26}{32})} + \| D (v_4|u|^4) \|_{L^1 L^2}.
\]
Using the previous estimate on $v_3 = \chi_3 u$, we deduce
\[
\|  u \|_{L^2 H^{\frac{3}{2}} (\frac{6}{32}, \frac{26}{32})} \leq \|  \chi_3 u \|_{L^2 H^{\frac{3}{2}}} \leq C \left ( \int _0^{t_1} \frac{1}{ \lambda ^{2(1 + 2 \delta)}} \right )^{\frac{1}{2}} \leq \frac{| \log \lambda |^{7}}{\lambda ^{2 \delta}}.
\]
Injecting the $L^{\infty}$ bound (\ref{interpolation}) in the nonlinear term, we obtain
\[ 
\| D (v_4 |u|^4) \|_{L^1 L^2} \leq \left \| \|\chi_3  u\|_{L^{\infty}} ^4 \| \chi_3  u \|_{H^1 } \right \|_{L^1} \leq \int_0^{t_1} \frac{1}{\lambda ^{4 \delta +1}} \leq \alpha^{\ast}.
\]
We sum the above estimates and get the desired control on the $H^1$ norm:
\[
\|D  v_4\|_{L^{\infty} L^2} \leq \| D v_4(0) \|_{L^2} + \frac{| \log \lambda |^{7}}{\lambda ^{2 \delta}} +\alpha^{\ast} \leq \frac{1}{2\lambda ^{3 \delta}}.
\]
This proves (\ref{conclusion1}) on $[1/4, 1/2]$ and as before also in $[3/2,7/4]$. 

\medskip

\textbf{Bootstrap near poles.} Using again Lemma \ref{interior}, we have for $k=1,2,3$,
\begin{equation} \label{nearpoles}
\| D^{\frac{k}{2}} u\|_{L^2(|r-1| >\frac{3}{4})} \leq \alpha^{\ast}+ \|  u \|_{L^2 {H}^{\frac{k+1}{2}}(|r-1| >\frac{1}{2})} + \left \| \|  D^{\frac{k}{2} } u\|_{L^2(|r-1| >\frac{1}{2})} \| u \|^4_{L^{\infty}(|r-1| >\frac{1}{2})} \right \|_{L^1}
\end{equation}

We now inject the bootstrap assumptions (\ref{assumption1/2}), (\ref{assumption1}) and (\ref{assumption3/2}) into the estimate (\ref{nearpoles}) and this will give the bootstrap conclusions (\ref{conclusion1/2}), (\ref{conclusion1}) and (\ref{conclusion3/2}). Indeed, first from (\ref{assumption1/2}), (\ref{assumption1}), (\ref{assumption3/2}) and Lemma \ref{integrale},
\begin{equation} \label{101}
\| D^{\frac{k+1}{2}} u \|_{L^2 L^2(|r-1| >\frac{1}{2})} \leq \left \| \frac{1}{\lambda ^{k-1 + (4-k) \delta }} \right \|_{L^2} \leq \left \{
\begin{array}{ccl}
\alpha^{\ast} &\textrm{ if } & k=1, \\ 
\frac{|\log \lambda |^{7 }}{\lambda ^{k-2+(4-k) \delta}} & \textrm{ if } & k=2,3. \\
\end{array}
\right.
\end{equation}
Moreover, by interpolation between (\ref{assumption1}) and (\ref{assumption2}), we have for some $D>0$,
\[
\| u \|^4_{L^{\infty}(|r-1| >\frac{1}{2})} \leq \| u \|^4_{H^{1+ \delta}(|r-1| >\frac{1}{2})} \leq \frac{1}{ \lambda ^{D \delta}},
\]
and therefore the nonlinear term is bounded as follow:
\begin{equation} \label{100}
\left \| \| D^{\frac{k}{2}} u\|_{L^2(|r-1| >\frac{1}{2})} \| u \|^4_{L^{\infty}(|r-1| >\frac{1}{2})} \right \|_{L^1} \leq \left \|  \frac{1}{ \lambda^{1+ C \delta}} \right \|_{L^1} \leq \alpha^{\ast}.
\end{equation}
Summing (\ref{100}) and (\ref{101}), we obtain 
\[
\| D^{\frac{k}{2}} u \|_{L^2(|r-1| > \frac{3}{4})} \leq \left \{
\begin{array}{ccl}
({\alpha^{\ast}})^{\frac{1}{5}} & \textrm{ if } & k=1, \\
\frac{1}{2\lambda ^{k-2+(5-k) \delta}} & \textrm{ if } & k=2,3, \\
\end{array}
\right.
\]
and this proves (\ref{conclusion1/2}), (\ref{conclusion1}) and (\ref{conclusion3/2}) in the region $|r-1| > 3/4$. 

\medskip

This finishes the proof of the bootstrap conclusions (\ref{conclusion1/2}), (\ref{conclusion1}) and (\ref{conclusion3/2}). 

\section{Conclusion}
Thus, we have shown that \textrm{C1}-\textrm{C7} are true until $t_1$ and then we deduce $t_1=T$ and this proves Proposition \ref{prop1}. In particular, the solution blows up since by (\ref{finitetime}) $t_1 < \infty$. Let us now deduce from the $B_i$'s the theorem \ref{theoreme1}. 

\medskip

\textbf{Proof of the log log speed.} Let us begin by proving the log log blow up speed. This relies on the integration of the differential equivalences 
\[
 b_s \sim -e ^{- \frac{1}{b}}, \qquad -\frac{\lambda_s}{\lambda} \sim b.
\]
We first prove the existence of $C>0$ such that
\begin{equation} \label{equation12}
 \frac{1}{C} (T-t) \leq \lambda ^2(t) \log | \log \lambda (t) | \leq C (T-t).
\end{equation}
To prove this, it is sufficient to show 
\[
 \frac{1}{C} \leq -\frac{d}{dt} ( \lambda^2 \log | \log \lambda | ) \leq C.
\]
Derivating this expression, using the smallness of $\lambda$ as $\alpha^{\ast}$ goes to $0$ and the equality $ \lambda _s /\lambda = \lambda \lambda _t$, it is sufficient to prove
\[
 \frac{1}{C} \leq - \frac{\lambda_s}{\lambda} \log | \log \lambda | \leq C,
\]
or since $C^{-1} b\leq -\lambda_s / \lambda \leq C b$,
\begin{equation} \label{equation11}
 \frac{1}{C} \leq b \log | \log \lambda | \leq C,
\end{equation}
First, the lower bound is a consequence of \textbf{C5}. For the upper bound, integrating the inequality 
\[
   - \frac{\lambda_s}{\lambda} \leq 2b \leq \frac{C}{ \log (s)},
\]
between $s_0$ and $s$, we deduce for $s$ large
\[
| \log \lambda (s) | \leq | \log \lambda (s_0) | + \int_{s_0}^s \frac{C}{ \log (\tau)} d \tau \leq C \frac{s}{ \log (s)}.
\]
Taking the $\log$ and using again the equivalence $b \sim (\log s)^{-1}$, we obtain for $s$ large
\[
 \log | \log \lambda | \leq \log s - \log \log s \leq C \log s \leq \frac{C}{b(s)},
\]
and this prove the upper bound in (\ref{equation11}). Then, the log log speed is deduced from (\ref{equation12}) by remarking that the function $f: \lambda \mapsto \lambda^2 \log | \log \lambda |$ is monotone near $0$ and that 
\[
\frac{1}{C} (T-t) \leq f \left (  \left ( \frac{\log | \log (T-t)| }{T-t}  \right ) ^{\frac{1}{2}} \right ) \leq C (T-t).
\]

\textbf{Proof of the blow up on a curve.} Now we prove that the location of singularity is a curve. We verify that $r(t)$ has a limit when $t$ goes to $T$. Indeed, for $t_1,t_2 \in [0,T)$ with $t_1 < t_2$,  since from log log behavior of $\lambda$ proved straight above, $1/ \lambda$ is integrable near $T$ and from (\ref{parameter}) , we have
\[
 | r(t_1)-r(t_2) | \leq \int_{t_1}^{t_2}  \left |  \frac{ d r}{dt}   dt \right | \leq \int_{t_1}^{t_2} \frac{1}{ \lambda ^2} \left | \frac{d r}{ds} \right | dt  \leq \delta (\alpha ^{\ast}) \int_{t_1}^{t_2} \frac{dt}{ \lambda},
\]
and thus $r(t)$ satisfies the Cauchy criterion near $T$ and therefore has a limit $r(T) \in [1- \alpha ^{\ast} , 1+ \alpha ^{\ast} ] 
 \subset (0,a) $ as stated in 2) of Theorem \ref{theoreme1}. 
 
 \medskip

\textbf{Proof of the convergence of $u(t)$ to $u ^{\ast}$.} Let us prove the convergence of $\tilde{u}(t)$ to $u^{\ast}$ in $L^2(M)$. This convergence does not depend on the metric and for the sake of completeness, we give the proof which is similar to the Euclidean case \cite{Rap2006}. This is a consequence of two things: first outside the blow up curve, we have 
\[
\int _0 ^T \int_{ |r-r(T)| >R}  | u| ^6 \approx  \int _0 ^T \int_{ |r-r(T)| >R}  | \tilde{u}| ^6 < \infty. 
\] 
This will allow us to extend $\tilde{u}$ until the time $T$ in $L^2(| r-r(T)| >R)$. The second fact in that the limit $u^{\ast}$ of $\tilde{u }$ is in $L^2(M)$ and we have conservation of the mass at the limit i.e. $\| \tilde{u }(t)  \|_{L^2(M)} \to \| u^{\ast} \|_{L^2(M)}$ when $t$ goes to $T$. Equivalently, this means that the weak convergence in $L^2(M)$ is in fact strong. 

\medskip 

As in Lemma \ref{smallnessL2H1}, we may prove an $L^2H^1$ bound on $u$: for $R>0$ small enough
\[
\int_0^T \| \nabla u\|_{{L^2}( |r-r(T)| >R)} ^2 \leq C(R).
\]
Moreover, we have
\[
\| \tilde{u} \|_{H^{1/2}(|r-r(T)| >R)} \leq C(R).
\]
Indeed, we have already proved such a bound in the regions $|r-1| > 1/2$ (see bootstrap conclusion \textbf{B7}). Near the blow up curve, we may perform the same type of argument as in the proof of the $H^{1/2}$ smallness of $\tilde{u}$ outside poles in the section smallness of the critical norm. This allows to get the boundedness as close to the blow up curve as we want. 

\medskip

Now, let $R>0$ and $\chi$ be a smooth cut-off function such that $\chi (r)=0$ if $|r-r(T)| \leq R$ and $\chi(r)=1$ if $|r-r(T)| \geq 2R$. Let $t \in [0,T)$ and $s$ such that $t+s<T$. We set $v_s(t)=u(t+s)-u(t)$ and $\var >0$. We introduce $t_0$ such that for all $s>0$ small enough:
\[
 \int | v_s(t_0) |^2 \leq \var.
\]
Then
\begin{eqnarray*}
\frac{d}{dt} \| \chi ^6 v_s(t) \|_{L^2} ^2&=& - 2 \mathrm{Im} \int \chi^6 \overline{v_s(t)}  \left ( ( | u(t+s) |^4 u(t+s) - |u(t)|^4 u(t)) + (\Delta u(t+s)- \Delta u(t) )  \right ) \\
                                        & =& A+B.
\end{eqnarray*}
We bound the $A$ term as follow:
\begin{eqnarray*}
|A| & \leq & \int \chi^6 |u(t+s)- u(t)| \left | | u(t+s) |^4 u(t+s) - |u(t)|^4 u(t) \right | \\
         & \leq &  \int \chi ^6 (|u(t+s)|^6 + |u(t)|^6) .
\end{eqnarray*}
But for all $\tau \in [0,T)$, from Gagliardo-Niremberg inequality:
\begin{eqnarray*}
\int \chi^6 |u(\tau)|^6 &\leq & \| \chi u(\tau) \|_{H^{1/2}} ^4 \| \nabla (\chi u(\tau)  ) \|_{L^2}^2 .
\end{eqnarray*}
Therefore, with the $H^{1/2}$ boundedness and conservation of the $L^2$ norm, we get
\begin{eqnarray*}
|A| &\leq &C \left ( 1+ \| \nabla u(t+s)   \|_{L^2(|r-r(T)| > R )}^2 + \| \nabla u(t)   \|_{L^2(|r-r(T)| > R )}^2  \right ).
\end{eqnarray*}
By integration by parts, mass conservation and $ab \leq a^2 + b^2$, we have for the $B$ term,
\begin{eqnarray*}
|B| &\leq & \left |  \int ( \nabla u(t+s) - \nabla u(t) ) \left ( \nabla (\chi ^6 )\overline{v_s} + \chi ^6  \nabla \overline{v_s} \right )      \right | \\
   &\leq& C \left ( 1+ \| \nabla u(t+s) \|_{L^2(|r-r(T)| > R )}^2+   \| \nabla u(t) \|_{L^2(|r-r(T)| > R )}^2 \right ).
\end{eqnarray*}
Finally, we have
\[
\frac{d}{dt} \| \chi ^6 v_s(t) \|_{L^2} ^2 \leq C \left ( 1+ \| \nabla u(t+s) \|_{L^2(|r-r(T)| > R )}^2+  C \| \nabla u(t) \|_{L^2(|r-r(T)| > R )}^2 \right ).
\]
We integrate this inequality in time between $t_0$ and $t$ to deduce:
\begin{eqnarray*}
\| \chi ^6 v_s(t) \|_{L^2} ^2  & \leq & \int | v_s(t_0) |^2 + C (T-t_0) + C \int_{t_0}^T \| \nabla u(\tau) \|_{L^2(|r-r(T)| > R )}^2 \\
     & &  + C \int_{t_0 +s} ^T \| \nabla u(\tau) \|_{L^2(|r-r(T)| > R )}^2 \\
     & \leq &  2 \var ,
\end{eqnarray*}
if $t_0$ is close enough to $T$. Thus $u(t)$ satisfies the Cauchy criteria in the area $|r-r(T)| > 2 R$ so that it converges. Since 
\[
\frac{1}{ \sqrt{\lambda (t)}} \tilde{Q}_b \left ( \frac{ r-r(t)} { \lambda (t)} \right ) e ^{i \gamma (t)} \to 0 \quad \textrm{in} \ L^2( |r-r(t)| >2 R),
\]
$\tilde{u}(t)$ also converges to an element $u^{\ast} \in L^2 (|r-r(T)| > 2 R)$ for all $R>0$. In particular since $\tilde{u} (t)$ is uniformly bounded in $L^2$, we deduce $ u^{\ast} \in L^2(M)$ and $\tilde{u}(t)$ converges weakly to $u ^{\ast}$ in $L^2(M)$. Moreover let $R(t) = \lambda (t) A(t)$ where $A(t)$ is defined in (\ref{choicea} so that $R(t) \to 0$ as $t \to T$. Let also $\Phi$ be a smooth cut-off such that $\Phi (r) =1$ if $r \geq 1$ and $\Phi (r)=0$ if $r \leq 1/2$ and $0 \leq \Phi \leq 1$. Then for a fixed time $t$, after using the equation for $u$ and integration by parts,
\begin{eqnarray*}
\left | \frac{d}{d \tau} \int \Phi \left ( \frac{r-r(t)}{ R(t) } \right ) | u (\tau) |^2 \right | &  = & \left | \frac{2}{ R(t)}  \mathrm{Im} \int \nabla u( \tau) \overline{u}( \tau) \nabla \Phi  \left ( \frac{r-r(t)}{ R(t) } \right ) \right | \\
  & \leq & \frac{C}{ R(t)} \| \nabla u( \tau) \|_{L^2(M)}.
\end{eqnarray*}
We integrate this inequality between $t$ and $T$ to have
\begin{eqnarray} \label{convergenceustar}
\left | \int \Phi \left ( \frac{r-r(t)}{ R(t) } \right ) |u^{\ast} |^2 -\int \Phi \left ( \frac{r-r(t)}{ R(t) } \right ) | u (t) |^2 \right | & \leq & \frac{C}{ R(t)} \int_t^T \frac{d \tau }{\lambda( \tau)}.
\end{eqnarray}
But the right hand side of the above inequality goes to $0$. Indeed, we know the log log behavior of $\lambda$ and since 
\[
b(s) \geq \frac{C}{ \mathrm{log} (s)}, \qquad s = \int_0^t \frac{1}{ \lambda ^2}, 
\]
we deduce 
\[
b(t) \geq \frac{C}{ \mathrm{log} | \mathrm{log} (T-t) |},
\]
so that for some $\alpha >0$, $A(t) \geq | \mathrm{log} (T-t)|^{\alpha}$. Reinjecting this and the decay of $\lambda (t)$, we conclude to the convergence to $0$. But by Lebesgue theorem,
\[
\int \Phi \left ( \frac{r-r(t)}{ R(t) } \right ) |u^{\ast} |^2 \to \int | u^{\ast} |^2 \quad \textrm{as} \ t \to T,
\]
so that with (\ref{convergenceustar}),
\[
\int \Phi \left ( \frac{r-r(t)}{ R(t) } \right ) | u (t) |^2  \to \int | u^{\ast} |^2 \quad \textrm{as} \ t \to T,
\]
or equivalently since outside $u- \tilde{u}$ is negligeable outside the blow up curve,
\begin{equation} \label{convergencetou}
\int \Phi \left ( \frac{r-r(t)}{ R(t) } \right ) | \tilde{u} (t) |^2  \to \int | u^{\ast} |^2 \quad \textrm{as} \ t \to T.
\end{equation}
Now, we treat the part near the blow up curve. Writing $\tilde{u}$ in term of $\var$, we get
\begin{eqnarray*}
\int \left ( 1-  \Phi \left ( \frac{r-r(t)}{ R(t) } \right ) \right )  | \tilde{u}|^2 & \leq & \int _{ |r- r(t)| \leq R(t)} | \tilde{u}|^2 \\
                                                                                     & \leq & C \int _{|y| \leq A(t)} | \var (t,y) |^2 dy,
\end{eqnarray*}
where in the last line we used the boundedness of $\mu(y)$ for $|y| \leq A(t)$. But the right hand side may be controlled by $\mathcal E(t)$. Indeed, we use the inequality: for all $M \geq 1$,
\[
\int _{|y| \leq M} | \var (y) |^2 dy \leq C M^2 \left ( \int _{ |y| \leq 2M } | \partial_y \var |^2 dy + \int _{ |y| \leq 1} | \var (y) |^2 e^{- |y|} \right ).
\]
We refer to \cite{MerRap2004} for the proof. Therefore, 
\begin{eqnarray*}
\int \left ( 1- \Phi \left ( \frac{r-r(t)}{ R(t) } \right )  \right ) \ | \tilde{u}|^2 & \leq & C A^2(t) \left ( \int _{|y| \leq 2 A(t) } | \partial_y \var |^2dy  + \int  _{ |y| \leq 1} | \var (y) |^2 e^{-|y|} dy     \right ) \\
  & \leq &  e^{ \frac{a}{b(t)} } \mathcal E(t) \leq e^{ \frac{a}{b(t)} } \Gamma_b ^ {4/5} \to 0 .
\end{eqnarray*}
This together with (\ref{convergencetou}) gives the convergence  
\[
\int | \tilde{u} (t) |^2 \to \int | u^{\ast} |^2,
\]
and therefore we obtain the convergence of $\tilde{u}$ to $u^{\ast}$ in $L^2(M)$.

\medskip

\textbf{Proof of the convergence in the sense of measures.} Let $\phi$ be a continuous with compact support function on $M$. Then using polar coordonates and change of variable
\begin{eqnarray*}
\int  |u(t,x)|^2 \phi(x) dx  &=&  \int _{0} ^{2 \pi} \int |\tilde{Q}_b(y) |^2 \mu(y) \phi ( \lambda (t) y +r(t), \theta ) dy d \theta + \int  | \tilde{u} (t,x)|^2 \phi (x) dx  \\
                                          &  & + 2 \mathrm {Re} \left (  \int \int \tilde{ Q} _b(y) \overline{\var}(t, y) \phi(  \lambda (t) y + r(t) , \theta  ) \mu(y) dy d \theta  \right ) \\
                                          & =& A_1+A_2+A_3.
\end{eqnarray*}
Since $b(t)$ tends to $0$, $r(t)$ tends to $r(T)$, $\lambda (t) $ is much smaller than $b(t)$ and using the support of $\tilde{Q}_b$, by Lebesgue theorem, the first term tends to 
\[
 A_1 \to \| Q \| _{L^2}^2 h(r(t)) \int _{0}^{2 \pi} \phi(  r(T), \theta ) d \theta =  \| Q \| _{L^2}^2 \int \phi d \delta_{r(T)}.
\]
The second term converges to $\int |u ^{\ast} (x) | ^2 \phi(x) dx$. For the third term, since $\mathcal E(t)$ is exponentially small in $b$, by Cauchy-Schwarz
\[
 |A_3 | \leq C \mathcal E (t)  \frac{1}{ \sqrt b (t)} \to 0 \quad \textrm{as} \quad  t \to T.
\]
This proves the second point of Theoreme \ref{theoreme1} and the proof is finished.

\bigskip

\noindent \textbf{Acknowledgements.} This work is a part of the PhD thesis of the author under the direction of Nikolay Tzvetkov that I thank for his help and the rereading of the text.

\nocite{HolRou2010}

\bibliographystyle{plain}
\bibliography{bibliography}

\end{document}